\documentclass[a4paper]{amsart}
\usepackage{tikz}
\usetikzlibrary{quotes}
\usepackage{biblatex}
\usepackage{amsaddr}
\usepackage{mathrsfs} 
\usepackage{array} 
\usepackage{amssymb} 
\usepackage{stmaryrd} 
\usepackage{pdflscape}  
\usepackage{tikz-cd}
\usepackage[left=1.5in,right=1.5in,top=1in,bottom=1in]{geometry}
\usepackage{hyperref}
\usepackage{parskip}
\numberwithin{equation}{section}

\newcommand{\Stirling}[0]{\genfrac\{\}{0pt}{}}
\newcommand{\Stirlingone}[0]{\genfrac[]{0pt}{}}
\newcommand{\Lah}[0]{\genfrac{\lfloor}{\rfloor}{0pt}{}}
\newcommand{\mStirling}[0]{\genfrac{\llbracket}{\rrbracket}{0pt}{}}
\newcommand{\mStirlingone}[2]{\left[\mkern-3.8mu\middle|\genfrac{}{}{0pt}{}{#1}{#2}\middle|\mkern-3.8mu\right]}
\newcommand{\mLah}[2]{\left\lfloor\mkern-5.5mu\left\lfloor\genfrac{}{}{0pt}{}{#1}{#2}\right\rfloor\mkern-5.5mu\right\rfloor}
\newcommand{\uuline}[1]{\underline{\underline{#1}}}
\newcommand{\ooline}[1]{\overline{\overline{#1}}}

\newcommand{\floor}[1]{\left\lfloor #1 \right\rfloor}

\newcommand{\BINOM}{\mathrm{BINOM}}
\newcommand{\Lshift}{\mathrm{L}}

\newtheorem{theorem}{Theorem}[section]
\newtheorem{definition}[theorem]{Definition}
\newtheorem{proposition}[theorem]{Proposition}

\newtheorem{lemma}[theorem]{Lemma}
\newtheorem{corollary}[theorem]{Corollary}
\newtheorem{remark}[theorem]{Remark}
\newtheorem{problem}[theorem]{Problem}

\title[$m$-Bell and $m$-Stirling numbers]{$m$-Bell and $m$-Stirling numbers: iterated binomial transforms, hyper-Bessel functions, and moments of the Conway--Maxwell--Poisson distribution}
\author{Vencislav Popov}
\email{ven.popov@uzh.ch}
\thanks{Computational results and other supporting information are available on \href{https://github.com/venpopov/generalization-bell-numbers}{GitHub}. Claude Fable 5 (Anthropic) assisted with some of the derivations; its contributions are documented in the repository's pull requests, whereas the human author's contributions are direct commits, so the repository history provides a record of the respective contributions.}
\subjclass[2020]{Primary 05A18, 11B73; Secondary 05A15, 33C10, 60E05}
\date{}

\begin{document}
\begin{abstract}
We introduce a natural generalization of the Bell numbers: the $m$-Bell numbers $B^{(m)}_{n}$, characterized by the property that $m$ applications of the binomial transform reproduce the original sequence shifted $m$ places to the left. Their exponential generating functions satisfy $m$-th order ordinary differential equations whose solutions are hypergeometric (hyper-Bessel) functions, specializing to the exponential function when $m=1$ (classical Bell numbers) and to modified Bessel functions when $m=2$ (yielding ``Bessel--Bell'' numbers). Mirroring the Bell--Stirling correspondence, we construct $m$-Stirling triangular arrays from the two-term recurrence $\mStirling{n+1}{k}_m=m\floor{k/m}\mStirling{n}{k}_m+\mStirling{n}{k-1}_m$ and prove an elementary shift identity from which the central structure theorem follows: the row sums of the $m$-Stirling triangle reproduce $B^{(m)}_{n}$, and, more finely, the residue-class row sums are precisely the $m$ primitive $m$-Bell sequences. The $m$-Stirling numbers come in dual pairs (with first-kind partners, generalized falling factorials, and Lah-type companions), serve as conversion operators between polynomial bases, admit Dobi\'nski-like formulas, and count congruence-constrained partitions in an urn model as well as restricted permutation insertion histories. Finally, we show that the $m$-Bell numbers govern the moments of the Conway--Maxwell--Poisson distribution with integer dispersion parameter $\nu=m$: the scaled moments are combinations of fixed hyper-Bessel carrier ratios whose integer coefficients are precisely the primitive $m$-Bell sequences, recovering for $m=1$ the classical fact that the moments of the Poisson distribution are the Bell numbers.
\end{abstract}

\keywords{Bell numbers, Stirling numbers, set partitions, binomial transform, Conway--Maxwell--Poisson distribution, modified Bessel functions, hypergeometric functions}

\maketitle

\section{Introduction}\label{sec-introduction}
For a sequence  $a = (a_n)_{n \geq 0}$, its binomial transform $b = \mathrm{BINOM}a$ is:
\begin{equation*}
    b_n = \sum_{k=0}^{n} \binom{n}{k} a_k.
\end{equation*}
After $m \ge 1$ iterations \cite{spiveyKbinomialTransformsHankel2006}:
\begin{equation*}
    (\mathrm{BINOM}^m a)_n =\sum_{k=0}^{n} \binom{n}{k} m^{n-k} a_k,
\end{equation*}
Bernstein and Sloane \cite{bernstein1995} studied integer sequences that reproduce themselves, up to a shift, under transformations of this kind. The classical Bell numbers provide the best-known example:
\begin{equation}\label{eq-bell-recurrence}
B_{n+1}=\sum_{k=0}^n\binom{n}{k}B_k,
\qquad B_0=1.
\end{equation}
Equivalently, if $\Lshift(a_0,a_1,\ldots)=(a_1,a_2,\ldots)$, then $\BINOM B=\Lshift B$.

The Bell numbers count the total number of partitions of an $n$-element set, and they are part of a rich combinatorial structure that involves the Stirling numbers of the second kind (\href{https://oeis.org/A008277}{A008277}), Touchard polynomials \cite{weisstein}, the exponential function, and linear operators acting on it \cite{dattoliTouchardPolynomialsGeneralized2010}.

This paper studies the simultaneous $m$-fold generalization
\begin{equation}\label{eq:m-shift-recurrence}
a_{n+m}=\sum_{k=0}^n\binom{n}{k}m^{\,n-k}a_k,
\qquad n\geq0,
\end{equation}
or $\BINOM^m a=\Lshift^m a$. Equation \eqref{eq:m-shift-recurrence} is an order-$m$ recurrence: it defines an $m$-dimensional solution space, not a single sequence. We call its members \emph{$m$-binomial--shift sequences}. The solution with initial vector $(1,\ldots,1)$ will be called the \emph{canonical $m$-Bell sequence} and denoted $B^{(m)}=(B_n^{(m)})_{n\geq0}$.

Any such sequence is determined by its first $m$ values $(a_0,\ldots,a_{m-1})$, so the solution set is an $m$-dimensional lattice of integer sequences spanned by the $m$ \textbf{primitive} solutions $B^{(m,r)}$, $r = 0, \ldots, m-1$, whose initial conditions are $B^{(m,r)}_n = \delta_{n,r}$ for $0 \le n < m$. We reserve the unadorned symbol $B^{(m)} = B^{(m,0)} + \cdots + B^{(m,m-1)}$ for the \textbf{composite} sequence with initial conditions $(1,1,\ldots,1)$. The case $m=1$ recovers the Bell numbers, and $m=2$ corresponds to sequences \href{https://oeis.org/A007472}{A007472}, \href{https://oeis.org/A351143}{A351143} and \href{https://oeis.org/A351028}{A351028}, which shift by 2 places left after 2 binomial transforms (the sequences for $m>2$ are not currently present in the OEIS). Although the sequences for $m=2$ are listed in the OEIS, little was previously known about their properties.

In this paper we show that the $m$-fold binomial--shift-invariance property that characterizes these sequences arises from a combinatorial structure that closely mirrors that of the classical Bell numbers. The $m$-Bell numbers have exponential generating functions (e.g.f.s) that are the solutions of ordinary differential equations of order $m$. The solutions to these equations are a class of hypergeometric (``hyper-Bessel'') functions, which reduce to the exponential function for $m=1$ and to modified Bessel functions of the first and second kind for $m=2$, motivating the name Bessel--Bell numbers for the $2$-Bell case (Section \ref{sec-m2}; Theorem \ref{thm-m2-egf}). These generalized Bell numbers admit Dobi\'nski-like formulas (Theorem \ref{thm-bessel-dobiski} and Theorem \ref{thm-general-dobinski}). 

The main result is that each $m$-Bell sequence arises from a Stirling-like triangular array generated by the two-term recurrence
\[
\mStirling{n+1}{k}_m = m\floor{k/m}\,\mStirling{n}{k}_m + \mStirling{n}{k-1}_m ,
\]
motivating the name \emph{$m$-Stirling numbers} (Sections \ref{sec-2-stirling} and \ref{sec-general-case}). Our central structural result is an elementary shift identity for these triangles (Theorem \ref{thm-shift-identity}), from which we deduce that the row sums of the $m$-Stirling triangle form the composite $m$-Bell sequence and---more finely---that the row sums along residue classes of $k \bmod m$ are precisely the $m$ primitive $m$-Bell sequences (Corollary \ref{cor-mbell-rowsums}). For $m = 2$ this identifies A351143 and A351028 as the even-$k$ and odd-$k$ halves of a single triangle whose full row sums give A007472.

Like the classical Stirling numbers, the $m$-Stirling numbers come in dual pairs: there are companion triangles of the first kind, together with generalized falling factorials and Lah-type arrays, and the whole family acts as a system of conversion operators between polynomial bases (Section \ref{sec-first-kind}). Each $m$-Stirling array of the second kind also arises from the coefficients of polynomials generated by applying the exponential scaling operator to the hypergeometric e.g.f.s, once these are paired with appropriate \emph{carrier functions} (Sections \ref{sec-2-stirling} and \ref{sec-general-case}). On the combinatorial side, the $m$-Stirling and $m$-Bell numbers count congruence-constrained partitions in an urn model with $m$-compartment urns (Section \ref{sec-combinatorial}), while the first-kind numbers count permutations built by a parity-restricted insertion process (Section \ref{sec-first-kind}). 

Finally, just as the classical Bell numbers are the moments of the Poisson distribution with unit rate, a key result is that the $m$-Bell numbers govern the moments of a well-known generalization of the Poisson distribution, the Conway--Maxwell--Poisson distribution \cite{shmueliUsefulDistributionFitting2005}, at integer dispersion parameter $\nu = m$ (Section \ref{sec-cmp}). Appendix D collects the parallels between the classical framework and its generalization in a single reference table.

To clearly ground the analogy between the Bell--Stirling--Touchard framework and the novel results, we begin with a review of standard results and notation \cite{comtet1974, sándor2004}.

\section{Background}
\noindent The exponential generating function (e.g.f.) $\mathcal{A}(x)$ of a sequence $(a_n)_{n \geq 0}$ is a formal power series in $x$:
\[
\mathcal{A}(x) = \sum_{n=0}^\infty a_n \frac{x^n}{n!}.
\]
\begin{proposition}[Functional equation for the e.g.f.\ of $m$-Bell numbers]\label{prop:mbell-egf}
Let $\mathcal{A}(x)$ be the exponential generating function of a sequence \( a = (a_n) \) satisfying \eqref{eq:m-shift-recurrence}.
Then \( \mathcal{A}(x) \) satisfies the $m$-th order ODE
\[
\left(\frac{\mathrm{d}}{\mathrm{d}x}\right)^m \mathcal{A}(x) - e^{mx} \mathcal{A}(x) = 0.
\]
\end{proposition}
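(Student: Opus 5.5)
The plan is to compare coefficients of $x^n/n!$ on both sides of the ODE, working with formal power series throughout. Write $\mathcal{A}(x)=\sum_{n\ge0}a_n x^n/n!$.

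\textbf{Left-hand derivative term.} Term-by-term differentiation shifts indices by one: $\frac{\mathrm{d}}{\mathrm{d}x}\sum_n a_n x^n/n! = \sum_n a_{n+1}x^n/n!$. Iterating $m$ times gives
\[
\Bigl(\frac{\mathrm{d}}{\mathrm{d}x}\Bigr)^m\mathcal{A}(x)=\sum_{n\ge0}a_{n+m}\frac{x^n}{n!}.
\]
So the coefficient of $x^n/n!$ on the derivative side is exactly the left-hand side of \eqref{eq:m-shift-recurrence}.

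\textbf{Product term.} We have $e^{mx}=\sum_j m^j x^j/j!$. The product of two exponential generating functions is the e.g.f.\ of their binomial convolution, so
\[
e^{mx}\mathcal{A}(x)=\sum_{n\ge0}\Bigl(\sum_{k=0}^n\binom{n}{k}m^{n-k}a_k\Bigr)\frac{x^n}{n!}.
\]
To verify this, expand the product and collect terms with $j+k=n$, using $\frac{1}{j!\,k!}=\binom{n}{k}\frac{1}{n!}$. The inner sum is precisely $(\BINOM^m a)_n$, the $m$-fold iterated binomial transform recalled in the Introduction.

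\textbf{Conclusion.} By \eqref{eq:m-shift-recurrence}, the coefficients of $x^n/n!$ on the two sides agree for every $n\ge0$. Hence $\mathcal{A}^{(m)}-e^{mx}\mathcal{A}=0$ as formal power series. Conversely, the same computation shows the ODE is equivalent to the recurrence, which is consistent with the $m$-dimensional solution space: the initial values $a_0,\dots,a_{m-1}$ are free, and they correspond to the $m$ initial conditions of the ODE.

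There is no real obstacle here. The only point requiring care is the binomial-convolution step, i.e.\ confirming that multiplying by $e^{mx}$ realizes $\BINOM^m$ with the factor $m^{n-k}$ in the correct position. This is a routine index check. If one wants an analytic rather than formal statement, one would add a note that the series has a positive radius of convergence, but the formal identity is all that is claimed.
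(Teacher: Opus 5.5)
Your proof is correct and follows essentially the paper's argument. The paper translates $\mathrm{BINOM}$ into multiplication of the e.g.f.\ by $e^x$ and $\mathrm{L}$ into $\frac{\mathrm{d}}{\mathrm{d}x}$, then iterates each $m$ times; you instead check the $m$-fold versions directly, via the binomial convolution with $e^{mx}$ and the $m$-fold index shift, which is the same coefficient comparison made explicit.
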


\begin{proof}
The binomial transform and the left-shift operator acting on $a$ have simple effects on the e.g.f., $\mathcal{B}(x)$, of the resulting sequence $b$ (e.g.\ \cite{bernstein1995}):
\begin{equation}
\begin{aligned}
b = \mathrm{BINOM} \circ a \quad &\iff\quad \mathcal{B}(x) = e^x \mathcal{A}(x) \\
b = \mathrm{L} \circ a \quad &\iff\quad \mathcal{B}(x) = \mathcal{A}'(x).
\end{aligned}
\end{equation}
Applying $\mathrm{BINOM}$ $m$ times corresponds to multiplying $\mathcal{A}(x)$ by $e^{mx}$. Applying $\mathrm{L}$ $m$ times corresponds to taking the $m$-th derivative. Therefore, if \(\mathrm{BINOM}^m \circ a = \mathrm{L}^m \circ a\), then
\[
\mathcal{A}^{(m)}(x) = e^{mx} \mathcal{A}(x).
\]
Rewriting this gives the result.
\end{proof}

For $m=1$ the ODE $\mathcal{A}'(x) = e^{x}\mathcal{A}(x)$ with $\mathcal{A}(0)=1$ integrates at once to the celebrated e.g.f.\ of the Bell numbers:
\begin{equation}\label{eq-bell-egf}
    e^{e^x-1} = \sum_{n=0}^{\infty} B_n \frac{x^n}{n!}.
\end{equation}
Recall that the Bell numbers are the row sums of the triangular array formed by the Stirling numbers of the second kind:
\begin{align*}
B_n = \sum_{k=0}^{n}\genfrac\{\}{0pt}{}{n}{k},
\end{align*}
where the Stirling numbers of the second kind satisfy the two-term recurrence
\[
\genfrac\{\}{0pt}{}{n+1}{k} = k \, \genfrac\{\}{0pt}{}{n}{k} + \genfrac\{\}{0pt}{}{n}{k-1}, \,\,\,\,\, n, k \ge 1.
\]
A second classical recurrence, which will serve as the model for our main structural result (Theorem \ref{thm-shift-identity}), expresses a downward shift of the whole triangle through binomial averaging \cite{comtet1974, graham_concrete_nodate}:
\begin{equation}\label{eq-stirling-shift}
\Stirling{n+1}{k} = \sum_{j=0}^{n}\binom{n}{j}\Stirling{j}{k-1}.
\end{equation}
Summing \eqref{eq-stirling-shift} over $k$ recovers the Bell recurrence \eqref{eq-bell-recurrence}; combinatorially, the binomial coefficient chooses which elements avoid the block containing the largest element.

The Stirling numbers of the second kind count the number of ways to partition $n$ labeled objects into $k$ unlabeled non-empty subsets and are the coefficients of the Touchard (also known as Bell or exponential) polynomials:
\begin{equation}
    T_n(x) = \sum_{k=0}^{n} \genfrac\{\}{0pt}{}{n}{k} x^k,
\end{equation}
whose e.g.f.\ is a bivariate generalization of the Bell numbers' e.g.f.:
\begin{equation}
    \sum_{n=0}^{\infty} T_n(x) \frac{t^n}{n!} = e^{x(e^t-1)}.
\end{equation}
Many useful identities involving Touchard polynomials and Stirling numbers can be shown via the action of the exponential scaling operator
\begin{equation}\label{eq-scaling-operator}
    e^{txD_x}f(x)=f(xe^t),
\end{equation}
where $D_x$ is the derivative operator with respect to $x$. Specifically, by applying this operator to the standard exponential function $e^x$ we get precisely the e.g.f.\ for the Touchard polynomials \cite{dattoliTouchardPolynomialsGeneralized2010}:
\[
e^{-x}e^{txD_x}e^x = e^{x(e^t-1)}.
\]
A final useful relation is the celebrated Dobi\'nski formula \cite{wilfGeneratingfunctionologyThirdEdition2005} that lets us express $T_n(x)$ and the Bell numbers $B_n = T_n(1)$ as an infinite sum:
\begin{equation}\label{eq-bell-dobinski}
\begin{aligned}
T_n(x) & =e^{-x}\sum_{k=0}^{\infty}\frac{x^k k^n}{k!} \\
B_n & = \frac{1}{e}\sum_{k=0}^{\infty}\frac{k^n}{k!}.
\end{aligned}
\end{equation}
In the remainder of the paper we will see that the $m$-Bell sequences have properties analogous to all of those defined above, starting with the special case $m = 2$ which motivated this exploration.

\section{The Bessel--Bell numbers ($m=2$)}\label{sec-m2}
\subsection{Deriving the e.g.f.}
\noindent To build intuition, before solving the general case, let us focus on the $m=2$ generalization of the Bell numbers, which satisfy the property that they shift by 2 places left after two binomial transforms:
\begin{equation}\label{eq-2bell-recurrence}
B^{(2)}_{n+2} = \sum_{k=0}^n \binom{n}{k} 2^{n-k} B_k^{(2)}.
\end{equation}
The following three sequences listed in the OEIS satisfy this property:
\begin{itemize}
    \item \href{https://oeis.org/A007472}{A007472}: $1, 1, 1, 3, 9, 29, 105, 431, 1969, \ldots$
    \item \href{https://oeis.org/A351143}{A351143}: $1, 0, 1, 2, 5, 16, 61, 258, 1177, \ldots$
    \item \href{https://oeis.org/A351028}{A351028}: $0, 1, 0, 1, 4, 13, 44, 173, 792, 4009, \ldots$
\end{itemize}
The only difference between the three sequences is the pair of initial conditions $(a_0, a_1)$: $(1, 1)$, $(1,0)$ and $(0,1)$, respectively. In the terminology of Section \ref{sec-introduction}, the primitive solutions for $m=2$ are A351143 $= B^{(2,0)}$ and A351028 $= B^{(2,1)}$, whose element-wise sum produces the composite sequence A007472 $= B^{(2)}$. Owing to the form of their e.g.f.s, we will refer to the sequences satisfying \eqref{eq-2bell-recurrence} as Bessel--Bell numbers (not to be confused with the Bessel numbers of \cite{cheonGeneralizedBesselNumbers2013a}, which arise from a different structure).

\begin{theorem}\label{thm-m2-egf}
The exponential generating functions of the $2$-Bell sequences are solutions of the modified Bessel ODE composed with $e^x$: each is a linear combination
\[
\mathcal{A}(x) = p\,I_0(e^x)+q\,K_0(e^x)
\]
of modified Bessel functions of the first ($I_0$) and second kind ($K_0$) of order 0, whose weights $p, q$ are uniquely determined by the first two elements of the corresponding sequence. Specifically:

\begin{center}
\begin{tabular}{lrr}
\hline
Sequence & Coefficient \(p\) & Coefficient \(q\) \\
\hline
A351143 & \(K_1(1) \approx 0.601907\) & \(I_1(1) \approx 0.565159\) \\
A351028 & \(K_0(1) \approx 0.421024\) & \(-I_0(1) \approx -1.266066\) \\
A007472 & \(K_0(1) + K_1(1)\approx 1.022932\) & \(I_1(1) - I_0(1)\approx -0.700907\) \\
\hline
\end{tabular}
\end{center}
\smallskip
\end{theorem}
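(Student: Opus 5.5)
We start from Proposition \ref{prop:mbell-egf} with $m=2$, which says that every $2$-Bell e.g.f.\ satisfies $\mathcal{A}''(x)=e^{2x}\mathcal{A}(x)$. We then change variables to $z=e^x$, so $\mathcal{A}(x)=y(z)$. Since $\frac{d}{dx}=z\frac{d}{dz}$, we get $\mathcal{A}''=z^2y''+zy'$, and the ODE becomes
\[
z^2y''(z)+zy'(z)-z^2y(z)=0 .
\]
This is exactly the modified Bessel equation of order $0$. Its solution space is two-dimensional and spanned by $I_0$ and $K_0$, which are linearly independent on $z>0$. Hence $\mathcal{A}(x)=p\,I_0(e^x)+q\,K_0(e^x)$ for some constants $p,q$.

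One technical point needs care. The e.g.f.\ is a priori a formal power series, while $I_0(e^x)$ and $K_0(e^x)$ are analytic near $x=0$ (the argument $e^x$ lies near $1$, away from the singularity of $K_0$). The ODE is linear with analytic coefficients and is regular at $x=0$. So its formal power-series solutions at $0$ are determined by $(\mathcal{A}(0),\mathcal{A}'(0))$, and they coincide with the Taylor expansions of the analytic solutions with the same initial data. The identification is therefore legitimate.

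Next we determine $p,q$ from the initial values $a_0=\mathcal{A}(0)$ and $a_1=\mathcal{A}'(0)$. We use $I_0'=I_1$ and $K_0'=-K_1$, together with $\frac{d}{dx}f(e^x)=e^xf'(e^x)$. At $x=0$ this gives the linear system
\[
p\,I_0(1)+q\,K_0(1)=a_0,\qquad p\,I_1(1)-q\,K_1(1)=a_1 .
\]
Its determinant is $-(I_0(1)K_1(1)+I_1(1)K_0(1))$. By the Wronskian identity $I_\nu(z)K_{\nu+1}(z)+I_{\nu+1}(z)K_\nu(z)=1/z$ at $z=1$, this determinant equals $-1$. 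The system is therefore uniquely solvable, which gives the uniqueness claim, and Cramer's rule produces
\[
p=a_0K_1(1)+a_1K_0(1),\qquad q=a_0I_1(1)-a_1I_0(1).
\]

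Finally we substitute the three sets of initial conditions. For $(1,0)$ we get $(p,q)=(K_1(1),I_1(1))$. For $(0,1)$ we get $(K_0(1),-I_0(1))$. For $(1,1)$ we get the sum of the two, which reproduces every entry of the table; the decimal values come from standard evaluations. The main obstacle is the clean determinant: without the Wronskian identity the coefficients would carry an ugly normalizing denominator. The formal-versus-analytic point is the only other place that needs care.
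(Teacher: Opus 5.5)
Your proof is correct and follows the paper's route. You use Proposition~\ref{prop:mbell-egf} and the substitution $t=e^x$ to reach the modified Bessel equation of order $0$, match $\mathcal{A}(0)$ and $\mathcal{A}'(0)$ via $I_0'=I_1$ and $K_0'=-K_1$, and resolve the resulting $2\times 2$ system with the Wronskian identity; solving it once by Cramer's rule for general $(a_0,a_1)$, instead of case by case as the paper does, and explicitly identifying the formal e.g.f.\ with the analytic solution are small but welcome refinements.
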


\begin{proof}
By Proposition \ref{prop:mbell-egf}, the e.g.f.\ $\mathcal{A}(x)$ satisfies the linear ODE
\[
\mathcal{A}''(x)-e^{2x}\,\mathcal{A}(x)=0.
\]
Set $t=e^{x}$ (so $\mathrm{d}t/\mathrm{d}x=t$) and define $f(t)=\mathcal{A}(x)$.  By the chain rule,
\[
\begin{aligned}
\mathcal{A}'(x) &= t\,f'(t),\\
\mathcal{A}''(x)&= t\,f'(t)+t^{2}f''(t),\\
e^{2x}\,\mathcal{A}(x) &= t^{2}f(t).
\end{aligned}
\]
Substituting in yields the standard form of the modified Bessel equation of order $0$ (see \href{https://dlmf.nist.gov/10.25.E1}{10.25.1}) \cite{NIST:DLMF}
\begin{equation}\label{eq-m2-ode}
t^{2}f''+t\,f'-t^{2}f=0,
\end{equation}
whose general solution is
\[
f(t)=p\,I_0(t)+q\,K_0(t),
\]
with $I_\nu$ and $K_\nu$ being the modified Bessel functions of the first and second kind, and $p$ and $q$ constants determined by the initial conditions. Therefore, we obtain the general solution for $\mathcal{A}(x)$, the e.g.f.\ of sequences \href{https://oeis.org/A007472}{A007472}, \href{https://oeis.org/A351143}{A351143} and \href{https://oeis.org/A351028}{A351028}, by substituting back $t=e^x$:
\begin{equation}
\mathcal{A}(x)=p\, I_0(e^x) + q\,K_0(e^x).
\end{equation}
To determine $p$ and $q$ we use the known initial conditions. Sequence A007472 is the element-wise sum of A351143 and A351028, so it is sufficient to determine $p$ and $q$ for the latter two sequences only. For A351143 we have $a_0 = 1$ and $a_1 = 0$. Matching the first two Maclaurin coefficients of $\mathcal{A}(x)$ yields the following system of equations:
\[
\begin{aligned}
\mathcal{A}(0) & = p\, I_0(1) + q\,K_0(1) = 1 \\
\mathcal{A}'(x)\bigr|_{x=0} &=p\, \tfrac{\mathrm{d}}{\mathrm{d}x}I_0(e^x)\bigr|_{x=0} + q\,\tfrac{\mathrm{d}}{\mathrm{d}x}K_0(e^x)\bigr|_{x=0} = 0,
\end{aligned}
\]
where $f(x)\bigr|_{x=0}$ denotes evaluation at 0. The derivatives of $I_0$ and $K_0$ are thankfully straightforward (\href{https://dlmf.nist.gov/10.29.E3}{10.29.3}) \cite{NIST:DLMF}:
\begin{equation}
\begin{aligned}
I_0'(z) &= I_1(z) \\
K_0'(z) &= -K_1(z). 
\end{aligned}
\end{equation}
By the chain rule we get
\begin{equation*}
    \begin{aligned}
        \tfrac{\mathrm{d}}{\mathrm{d}x}I_0\bigl(e^x\bigr) &= e^xI_1\bigl(e^x\bigr) \\
        \tfrac{\mathrm{d}}{\mathrm{d}x}K_0\bigl(e^x\bigr) &= -e^xK_1\bigl(e^x\bigr). 
    \end{aligned}
\end{equation*}
Evaluating at $x=0$ gives
\[
\begin{aligned}
\mathcal{A}(0) & = p \,I_0(1)+q\,K_0(1) = 1 \\
\mathcal{A}'(0) & = p\,I_1(1)-q K_1(1) = 0.
\end{aligned}
\]
After some standard algebraic torture we get the following expressions for $p$ and $q$:
\[
\begin{aligned}
p &= \frac{1-qK_0(1)}{I_0(1)} \\
q &= \frac{I_1(1)}{I_1(1)K_0(1)+I_0(1)K_1(1)}.
\end{aligned}
\]
These expressions can be simplified further due to the following Bessel identity concerning the Wronskian of the modified Bessel functions (see \href{https://dlmf.nist.gov/10.28.E2}{10.28.2}) \cite{NIST:DLMF}:
\begin{equation}\label{eq-bessel-wronskian-general}
    I_\nu(z)K_{\nu+1}(z)+I_{\nu+1}(z)K_\nu(z) = 1/z,
\end{equation}
which holds for every complex $\nu$ and every $z \neq 0$ (with the principal branches of $I_\nu$ and $K_\nu$; we only ever use it at real $z > 0$). In the special case when $\nu = 0$ and $z=1$:
\begin{equation}\label{eq-bessel-wronskian}
I_0(1)K_1(1)+I_1(1)K_0(1) = 1.
\end{equation}
Therefore the constants for the e.g.f.\ of sequence A351143 are
\[
\begin{aligned}
q &= I_1(1) \\
p &= \frac{1 - I_1(1)K_0(1)}{I_0(1)} = \frac{I_0(1)K_1(1)+I_1(1)K_0(1) - I_1(1)K_0(1)}{I_0(1)} = K_1(1).
\end{aligned}
\]
A very similar manipulation for the initial conditions $(0,1)$ of A351028 gives $p = K_0(1)$ and $q = -I_0(1)$, and adding the two solutions gives the coefficients for A007472 stated in the table, which concludes the proof.
\end{proof}

\subsection{Why do the e.g.f.s of the 2-Bell numbers produce integer coefficients?}\label{2-bell-integer-coefs}
The e.g.f.s of the 2-Bell numbers are admittedly more complicated than the e.g.f.\ $e^{e^x-1}$ of the Bell numbers \eqref{eq-bell-egf} (although the form is quite similar, as we discuss at the end of this section). How is it that such a relatively complicated expression, involving four different special functions, produces integer coefficients? Let us explore the series expansion of the derived e.g.f.s. Consider first the case of sequence A351143:
\[
\mathcal{A}_{351143}(x) = K_1(1)I_0(e^x) + I_1(1)K_0(e^x).
\]
We have the following standard series for $I_0$ and $K_0$, valid for real $z > 0$, which is the only case we need (with the principal branch of the logarithm they extend to the cut plane; see \href{https://dlmf.nist.gov/10.25.E2}{10.25.2} and \href{https://dlmf.nist.gov/10.31.E2}{10.31.2}) \cite{NIST:DLMF}:
\begin{equation}
    \begin{aligned}
        I_0(z) &= \sum_{k=0}^{\infty} \frac{(\frac{1}{2}z)^{2k}}{k!k!} \\
        K_0(z) &= -\log\biggl(\frac{z}{2}\biggr)I_0(z) + \sum_{k=0}^{\infty}\frac{\psi(k+1)(\frac{1}{2}z)^{2k}}{k!k!}
    \end{aligned}
\end{equation}
where $\psi$ is the digamma function. Let us focus on the $I_0$ function. Substituting $z=e^x$ and then expanding the Taylor series for the exponential function we get
\begin{equation}\label{eq-besseli-exp-maclaurin}
\begin{aligned}
I_0(e^x) &= \sum_{k=0}^{\infty} \frac{e^{2xk}}{2^{2k}k!k!} = \sum_{k=0}^{\infty} \frac{1}{2^{2k}k!k!}\sum_{n=0}^{\infty}\frac{x^n(2k)^n}{n!} = \sum_{n=0}^{\infty}\frac{x^n}{n!}\sum_{k=0}^{\infty} \frac{(2k)^n}{2^{2k}k!k!}.
\end{aligned}
\end{equation}
(Interchanging the order of summation is justified by absolute convergence.) Let the inner sum be represented by $S(n) = \sum_{k=0}^{\infty} \frac{(2k)^n}{2^{2k}k!k!}$, with the convention $0^0 = 1$ here and in every Dobi\'nski-type sum below (so the $k = 0$ term contributes exactly $\delta_{n,0}$). This formula is reminiscent of the Dobi\'nski formula for the standard Bell numbers \eqref{eq-bell-dobinski}, with an extra factorial in the denominator and extra powers of 2. Indeed, we can state an equivalent theorem:

\begin{theorem}[Dobi\'nski-like formula for the Bessel--Bell numbers]\label{thm-bessel-dobiski}
Let $v_n = B^{(2,0)}_n$ and $u_n = B^{(2,1)}_n$ denote the primitive $2$-Bell sequences A351143 and A351028. Then
\[
\begin{aligned}
I_0(e^x) &= \sum_{n=0}^\infty S(n) \frac{x^n}{n!}, \qquad \text{where}\\
S(n) &= \sum_{k=0}^{\infty} \frac{(2k)^n}{(2^{k}k!)^2}=v_n I_0(1) + u_n I_1(1).
\end{aligned}
\]
\end{theorem}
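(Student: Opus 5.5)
The first identity, $I_0(e^x)=\sum_n S(n)x^n/n!$, is exactly the computation \eqref{eq-besseli-exp-maclaurin}: substitute the power series of $I_0$ at $z=e^x$, expand each $e^{2kx}$, and interchange the sums by absolute convergence. So the content of the theorem is the evaluation $S(n)=v_nI_0(1)+u_nI_1(1)$. I would prove this by showing that $(S(n))_{n\ge0}$ satisfies the $2$-binomial--shift recurrence \eqref{eq-2bell-recurrence}. Since the solution space is spanned by the primitive solutions $v=B^{(2,0)}$ and $u=B^{(2,1)}$, it then suffices to check the two initial values.

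\emph{Step 1 (the ODE).} Put $\mathcal{S}(x)=I_0(e^x)$. The proof of Theorem \ref{thm-m2-egf} shows that $f(t)=I_0(t)$ solves \eqref{eq-m2-ode}, so $\mathcal{S}''(x)=e^{2x}\mathcal{S}(x)$. By the e.g.f.\ dictionary in the proof of Proposition \ref{prop:mbell-egf}, this is the coefficient-wise statement $\Lshift^2 S=\BINOM^2 S$. In other words, $S$ satisfies \eqref{eq-2bell-recurrence}. The correspondence runs in both directions: $\mathcal{S}$ is entire and equals its Maclaurin series, and the Cauchy product of $e^{2x}$ with $\mathcal{S}$ gives exactly $\sum_k\binom nk2^{n-k}S(k)$. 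As a cross-check, one could verify the recurrence directly from the Dobi\'nski sum. Writing $(2k)^{n+2}=(2k)^2(2k)^n$ and using $(2k)^2/(2^kk!)^2=1/(2^{k-1}(k-1)!)^2$, the shift $k\to k+1$ gives $(2k+2)^n=\sum_j\binom nj 2^{n-j}(2k)^j$, which is the binomial sum. I regard this as the cleanest route and would present it as the main argument.

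\emph{Step 2 (initial values).} For $n=0$ the sum gives $S(0)=I_0(1)$, using the convention $0^0=1$. For $n=1$ we get $S(1)=\mathcal{S}'(0)=I_0'(1)=I_1(1)$, by the derivative formula used in the proof of Theorem \ref{thm-m2-egf}. The sequence $v_nI_0(1)+u_nI_1(1)$ also satisfies \eqref{eq-2bell-recurrence}, because the recurrence is linear, and it takes the values $I_0(1)$ and $I_1(1)$ at $n=0,1$. Since a solution of the order-$2$ recurrence is determined by its first two terms, the two sequences coincide.

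The main obstacle is just the bookkeeping in Step 1: making the interchange of summation and the index shift rigorous, and handling the $k=0$ term correctly under the $0^0$ convention. Everything else follows from linearity and the two-dimensionality of the solution space established in Section \ref{sec-introduction}.
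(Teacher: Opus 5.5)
Your proposal is correct and is essentially the paper's proof. The paper derives $S(n+2)=\sum_{j}\binom{n}{j}2^{n-j}S(j)$ by the same shift $k\to k+1$ in the Dobi\'nski sum, checks $S(0)=I_0(1)$ and $S(1)=I_1(1)$, and concludes by strong induction, which is equivalent to your uniqueness argument for the order-$2$ recurrence; your ODE derivation of the recurrence from $\mathcal{S}''=e^{2x}\mathcal{S}$ and your computation of $S(1)$ as $\frac{\mathrm{d}}{\mathrm{d}x}I_0(e^x)\big|_{x=0}$ are valid equivalent alternatives (the former anticipates the structural argument of Theorem~\ref{thm-besselk-dobinski}).
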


\begin{proof}
Proceed by strong induction. First, establish the base cases:
\[
\begin{aligned}
S(0) &= \sum_{k=0}^{\infty} \frac{1}{2^{2k}k!k!} = I_0(1) \\
S(1) &= \sum_{k=0}^{\infty} \frac{2k}{2^{2k}k!k!} = \sum_{k=1}^{\infty} \frac{1}{2^{2k-1}(k-1)!k!} = \sum_{k=0}^{\infty} \frac{1}{2^{2k+1}k!(k+1)!} = I_1(1),
\end{aligned}
\]
where in the case of $S(1)$ we used the series expansion of the modified Bessel function of general integer order $\nu \ge 0$ evaluated at $1$ (\href{https://dlmf.nist.gov/10.25.E2}{10.25.2}) \cite{NIST:DLMF}:
\[
I_\nu(z) = \sum_{k=0}^\infty \frac{(z/2)^{2k+\nu}}{k!\,(k+\nu)!}.
\]
These match the claim since $(v_0, u_0) = (1,0)$ and $(v_1, u_1) = (0,1)$. For the inductive step, suppose that $S(j) = v_j I_0(1) + u_j I_1(1)$ holds for every $j \leq n+1$. Then:
\begin{equation}\label{eq-2bell-dobinski-proof-step1}
\begin{aligned}
S(n+2) &= \sum_{k=0}^{\infty} \frac{(2k)^{n+2}}{2^{2k}k!k!} = \sum_{k=1}^{\infty} \frac{(2k)^{n}}{2^{2k-2}(k-1)!(k-1)!} = \sum_{k=0}^{\infty} \frac{(2k+2)^{n}}{2^{2k}k!k!}\\
&= \sum_{k=0}^{\infty}\sum_{j=0}^{n} \binom{n}{j}2^{n-j}\frac{(2k)^j}{2^{2k}k!k!} = \sum_{j=0}^{n}\binom{n}{j}2^{n-j} \sum_{k=0}^{\infty}\frac{(2k)^j}{2^{2k}k!k!} \\
&= \sum_{j=0}^{n}\binom{n}{j}2^{n-j} S(j).
\end{aligned}
\end{equation}
First, notice that \eqref{eq-2bell-dobinski-proof-step1} has exactly the same form as the recurrence relation \eqref{eq-2bell-recurrence} that defines the 2-Bell numbers. Second, use the induction hypothesis and substitute $S(j)$:
\[
\begin{aligned}
S(n+2) &= \sum_{j=0}^{n}\binom{n}{j}2^{n-j} (v_j I_0(1)+u_j I_1(1)) \\
&= I_0(1)\bigg[\sum_{j=0}^{n}\binom{n}{j}2^{n-j}v_j\bigg] + I_1(1)\bigg[\sum_{j=0}^{n}\binom{n}{j}2^{n-j}u_j\bigg] \\
&= v_{n+2}I_0(1)+u_{n+2}I_1(1),
\end{aligned}
\]
where the last step applies the defining recurrence \eqref{eq-2bell-recurrence} of the primitive sequences $v$ and $u$. This completes the proof. Table \ref{tab-vn-un-coefs} illustrates the first few cases.
\end{proof}

\begin{table}[htbp]
    \centering
        \begin{tabular}{|c|l|c|c|c|}
        \hline
        $n$ & $S(n)$                & $v_n$ & $u_n$ & $v_n+u_n$ \\
        \hline
        0   & $I_0(1)$              & 1     & 0     & 1         \\
        1   & $I_1(1)$              & 0     & 1     & 1         \\
        2   & $I_0(1)$              & 1     & 0     & 1         \\
        3   & $2I_0(1)+I_1(1)$    & 2     & 1     & 3         \\
        4   & $5I_0(1)+4I_1(1)$   & 5     & 4     & 9         \\
        5   & $16I_0(1)+13I_1(1)$ & 16    & 13    & 29        \\
        6   & $61I_0(1)+44I_1(1)$ & 61    & 44 &    105 \\
        \hline
        \end{tabular}
        \bigskip
    \caption{Coefficients in the expansion of the Dobi\'nski-like formula for the Bessel--Bell numbers (Theorem \ref{thm-bessel-dobiski}).}
    \label{tab-vn-un-coefs}
\end{table}

The major difference between the Dobi\'nski-like formula of Theorem \ref{thm-bessel-dobiski} and the standard Dobi\'nski formula \eqref{eq-bell-dobinski} is that there are two different modified Bessel functions acting as \textbf{carriers} for the coefficients of the two primitive Bessel--Bell sequences, rather than a single exponential function. In the standard Dobi\'nski formula one multiplies the infinite sum by $1/e$ to cancel the exponential carrier; here no single multiplicative factor can cancel two independent carriers at once. The concept of a carrier function will play a fundamental role in understanding how $m$-Bell and $m$-Stirling numbers are generated, and we formalize it in Section \ref{sec-2-stirling} (Definition \ref{def-carrier-system}).

What about the second Bessel function $K_0(e^x)$ appearing in the e.g.f.s? Its Maclaurin coefficients turn out to involve exactly the same integer sequences, with one sign change.

\begin{theorem}\label{thm-besselk-dobinski}
With $v_n$ and $u_n$ as in Theorem \ref{thm-bessel-dobiski},
\[
K_0(e^x) = \sum_{n=0}^\infty \frac{x^n}{n!}\big(v_n K_0(1) - u_n K_1(1)\big).
\]
\end{theorem}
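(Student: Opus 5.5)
The cleanest route avoids the logarithmic series of $K_0$ entirely. We use the ODE structure already established and show that both sides of the claimed identity solve the same second-order initial value problem.

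Set $g(x)=K_0(e^x)$. By the substitution $t=e^x$ in the proof of Theorem \ref{thm-m2-egf}, $g$ satisfies $g''(x)=e^{2x}g(x)$. This is exactly the ODE of Proposition \ref{prop:mbell-egf} with $m=2$. Reading that proposition's proof backwards, the correspondences $\mathrm{BINOM}\leftrightarrow e^x\cdot$ and $\Lshift\leftrightarrow \frac{\mathrm{d}}{\mathrm{d}x}$ are equivalences. Hence the Maclaurin coefficient sequence $c_n=g^{(n)}(0)$ of any analytic solution satisfies the recurrence \eqref{eq-2bell-recurrence}: comparing coefficients of $x^n/n!$ in $g''=e^{2x}g$ gives $c_{n+2}=\sum_{k}\binom{n}{k}2^{n-k}c_k$. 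Here $g$ is analytic near $x=0$ because $K_0$ is analytic at $z=1>0$, so this is a genuine power series identity with a positive radius of convergence.

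The solution space of \eqref{eq-2bell-recurrence} is two-dimensional and spanned by the primitive sequences $v=B^{(2,0)}$ and $u=B^{(2,1)}$. Since the recurrence is linear, this forces $c_n=c_0v_n+c_1u_n$ for all $n$. It remains to compute the initial values. We have $c_0=K_0(1)$, and by the chain rule together with $K_0'=-K_1$ (already used in the proof of Theorem \ref{thm-m2-egf}), $c_1=-e^0K_1(e^0)=-K_1(1)$. Substituting gives $c_n=v_nK_0(1)-u_nK_1(1)$, which is the claim. The sign change relative to Theorem \ref{thm-bessel-dobiski} comes precisely from the derivative rule $K_0'=-K_1$, versus $I_0'=I_1$.

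The same argument reproves Theorem \ref{thm-bessel-dobiski} without the Dobi\'nski sum, which serves as a sanity check. The only step needing care is that the recurrence really determines the sequence from $(c_0,c_1)$. This is immediate because $c_{n+2}$ is expressed in terms of $c_0,\dots,c_n$. There is no genuine obstacle, since the potentially messy $\log$ and $\psi$ terms in the series for $K_0$ never have to be expanded.
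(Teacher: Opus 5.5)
Your proof is correct, and it is a streamlined version of the paper's argument. The paper does not apply the second-order equation to $K_0(e^x)$ directly. It shows that the pairs $(I_0(e^x), I_1(e^x))$ and $(K_0(e^x), -K_1(e^x))$ solve the same first-order system $X' = e^xY$, $Y' = e^xX - Y$. It then constructs universal coefficient functions $a_n, b_n \in \mathbb{Z}[e^x]$ with $X^{(n)} = a_nX + b_nY$ for every solution pair, and identifies $a_n(0) = v_n$ and $b_n(0) = u_n$. That identification is made by exactly your argument: the first component solves $X'' = e^{2x}X$, so its Maclaurin coefficients obey the 2-Bell recurrence, and the initial data pick out the primitive sequences.

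You apply that step to $K_0(e^x)$ itself, which makes the first-order system and the $a_n, b_n$ unnecessary for this theorem. You only need the Bessel equation for $K_0$ and the rule $K_0' = -K_1$, not the derivative rule for $K_1$.

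The paper's detour buys two structural facts. First, the expansion $X^{(n)}(x) = a_n(x)X(x) + b_n(x)Y(x)$ holds at every $x$, not just at $0$. Second, checking that $(K_0, -K_1)$ obeys the same system as $(I_0, I_1)$ is, after $t = e^x$, precisely the statement that $(K_0, -K_1)$ is a second carrier system of modulus 2 (Definition \ref{def-carrier-system}). The paper cites this proof for that fact when it builds the Bessel--Touchard polynomials (Theorem \ref{thm-bb-polynomials}); with your route it would have to be checked separately there.

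One small caveat on your closing remark. To recover Theorem \ref{thm-bessel-dobiski} as stated, you still need the termwise expansion identifying the Maclaurin coefficients of $I_0(e^x)$ with the sum $S(n)$. So the argument avoids the inductive manipulation of the Dobi\'nski sum, but not the sum itself.
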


\begin{proof}
Consider the two pairs of functions
\[
F_0(x) = I_0(e^x), \quad F_1(x) = I_1(e^x), \qquad G_0(x) = K_0(e^x), \quad G_1(x) = -K_1(e^x).
\]
From the derivative formulas $I_0' = I_1$, $I_1'(z) = I_0(z) - z^{-1}I_1(z)$, $K_0' = -K_1$ and $K_1'(z) = -K_0(z) - z^{-1}K_1(z)$ (\href{https://dlmf.nist.gov/10.29.E2}{10.29.2}) \cite{NIST:DLMF}, the chain rule gives
\[
\begin{aligned}
F_0' &= e^x F_1, & F_1' &= e^x F_0 - F_1,\\
G_0' &= e^x G_1, & G_1' &= e^x G_0 - G_1.
\end{aligned}
\]
That is, $(F_0, F_1)$ and $(G_0, G_1)$ satisfy the \emph{same} first-order linear system
\begin{equation}\label{eq-first-order-system}
X' = e^x\,Y, \qquad Y' = e^x\,X - Y.
\end{equation}
Define coefficient functions $a_n, b_n \in \mathbb{Z}[e^x]$ by the recursion
\[
a_{n+1} = a_n' + e^x b_n, \qquad b_{n+1} = e^x a_n + b_n' - b_n, \qquad a_0 = 1,\ b_0 = 0.
\]
A one-line induction shows that \emph{every} solution pair $(X, Y)$ of \eqref{eq-first-order-system} satisfies
\begin{equation}\label{eq-universal-expansion}
X^{(n)} = a_n(x)\,X + b_n(x)\,Y \qquad (n \geq 0),
\end{equation}
since differentiating \eqref{eq-universal-expansion} and substituting \eqref{eq-first-order-system} reproduces the recursion. We claim that $a_n(0) = v_n$ and $b_n(0) = u_n$. To see this, first note that any solution of \eqref{eq-first-order-system} has first component satisfying the 2-Bell ODE: $X'' = (e^xY)' = e^xY + e^xY' = X' + e^{2x}X - e^xY = e^{2x}X$, with $X'(0) = Y(0)$. Comparing Maclaurin coefficients on both sides of $X'' = e^{2x}X$ shows that the sequence $\big(X^{(n)}(0)\big)_n$ satisfies the 2-Bell recurrence \eqref{eq-2bell-recurrence}. Now take the solution pair with initial data $(X(0), Y(0)) = (1, 0)$: then $X(0) = 1$, $X'(0) = 0$, so $\big(X^{(n)}(0)\big)_n$ is precisely the primitive sequence A351143, i.e.\ $X^{(n)}(0) = v_n$; but \eqref{eq-universal-expansion} evaluated at $0$ gives $X^{(n)}(0) = a_n(0)$. Hence $a_n(0) = v_n$, and the solution pair with initial data $(0,1)$ gives $b_n(0) = u_n$ in the same way. Applying \eqref{eq-universal-expansion} to the pair $(G_0, G_1)$ and evaluating at $x = 0$ yields
\[
G_0^{(n)}(0) = a_n(0)\, K_0(1) + b_n(0)\, \big({-}K_1(1)\big) = v_n K_0(1) - u_n K_1(1). \qedhere
\]
\end{proof}

\begin{remark}
The same evaluation applied to the pair $(F_0, F_1)$ re-derives Theorem \ref{thm-bessel-dobiski}, since $F_0^{(n)}(0) = S(n)$ by \eqref{eq-besseli-exp-maclaurin}. The two proofs are complementary: the induction of Theorem \ref{thm-bessel-dobiski} works directly with the series, while the present argument isolates the structural reason---$I$- and $K$-type functions solve the same differential system---why the same integer sequences appear in both expansions.
\end{remark}

Finally, by combining Theorem~\ref{thm-bessel-dobiski} and Theorem~\ref{thm-besselk-dobinski}, we have
\begin{equation}
\begin{aligned}
p\, I_0(e^x) + q\, K_0(e^x) = \sum_{n=0}^\infty \frac{x^n}{n!} \bigg[ v_n \big(p I_0(1) + q K_0(1)\big) + u_n \big(p I_1(1) - q K_1(1)\big)\bigg].
\end{aligned}
\end{equation}
Within this equation, an appropriate choice of $p$ and $q$ neutralizes the modified Bessel functions in the series thanks to the Wronskian identity \eqref{eq-bessel-wronskian}. Specifically, $p = K_1(1), q = I_1(1)$ makes the coefficient of $v_n$ equal to 1 and the coefficient of $u_n$ equal to 0, giving integer coefficients---the series A351143. Similarly for the other two choices of $p$ and $q$ established in Theorem~\ref{thm-m2-egf}: $(K_0(1), -I_0(1))$ isolates $u_n$, and the sum of the two choices produces $v_n + u_n$.

Thus the e.g.f.\ of a 2-Bell sequence is structurally similar to that of the 1-Bell (Bell) sequence: it is the composition of an exponential-like function with the exponential function, and the remaining constant factors are there just to ensure integer coefficients, exactly like the factor $e^{-1}$ in $e^{-1}e^{e^x}$. With the integer-coefficient property now rigorously settled, let us shift to the combinatorial side: a triangular recurrence whose row sums reproduce precisely the Bessel--Bell sequences.

\section{Bessel--Stirling numbers of the second kind}\label{sec-2-stirling}

\noindent The similarity of the Bessel--Bell numbers' e.g.f.\ to the standard Bell numbers' e.g.f.\ immediately invites the question of whether there is also a Bessel--Stirling equivalent of the triangular array of the Stirling numbers of the second kind, whose rows sum to the Bell numbers. There is, and the correct recurrence modifies the classical one in a single place.

\begin{definition}[Bessel--Stirling numbers of the second kind]
Let $\mStirling{n}{k}_2$ be defined for integers $n, k \geq 0$ by the initial conditions $\mStirling{0}{k}_2 = \delta_{0,k}$, the convention $\mStirling{n}{k}_2 = 0$ for $k < 0$, and the two-term recurrence
\begin{equation}\label{eq-bs-recurrence}
\mStirling{n+1}{k}_2 = 2\floor{k/2} \, \mStirling{n}{k}_2 + \mStirling{n}{k-1}_2.
\end{equation}
The notation follows the bracket style for binomial coefficients $\binom{n}{k}$ and Stirling numbers $\Stirling{n}{k}$ popularized by \cite{graham_concrete_nodate}, with doubled delimiters and a subscript indicating the modulus (here $2$); Section \ref{sec-general-case} treats general modulus $m$.
\end{definition}

This recurrence is almost identical to that of the Stirling numbers of the second kind, except that the coefficient in front of $\mStirling{n}{k}_2$ has parity---it is always even, with $k$ rounded down to the nearest even integer. The numbers $\mStirling{n}{k}_2$ form a triangular array whose first few rows are listed in Table \ref{tab-bessel-stirling2}; the triangle is available as OEIS entry \href{https://oeis.org/A383235}{A383235}, which was submitted in the course of this work. An immediate consequence of the definition is that $\mStirling{n}{k}_2 = 0$ for $k > n$ (each application of \eqref{eq-bs-recurrence} raises the maximal index $k$ by one), that $\mStirling{n}{n}_2 = 1$, and that $\mStirling{n}{1}_2 = \delta_{n,1}$, since the coefficient $2\floor{k/2}$ vanishes for $k \in \{0, 1\}$.

\begin{table}[htbp]
    \centering
        \begin{tabular}{|l|*{10}{c}|c|}
        \hline
              & k=0 & k=1 & k=2 & k=3 & k=4 & k=5 & k=6 & k=7 & k=8 & k=9 & $\sum_{k=0}^{n}$ \\
        \hline
        n=0 & 1   &     &     &     &     &     &     &     &     &     & 1                \\
        n=1 & 0   & 1   &     &     &     &     &     &     &     &     & 1                \\
        n=2 & 0   & 0   & 1   &     &     &     &     &     &     &     & 1                \\
        n=3 & 0   & 0   & 2   & 1   &     &     &     &     &     &     & 3                \\
        n=4 & 0   & 0   & 4   & 4   & 1   &     &     &     &     &     & 9                \\
        n=5 & 0   & 0   & 8   & 12  & 8   & 1   &     &     &     &     & 29               \\
        n=6 & 0   & 0   & 16  & 32  & 44  & 12  & 1   &     &     &     & 105              \\
        n=7 & 0   & 0   & 32  & 80  & 208 & 92  & 18  & 1   &     &     & 431              \\
        n=8 & 0   & 0   & 64  & 192 & 912 & 576 & 200 & 24  & 1   &     & 1969             \\
        n=9 & 0   & 0   & 128 & 448 & 3840& 3216& 1776& 344 & 32  & 1   & 9785             \\
        \hline
        \end{tabular}
        \bigskip
    \caption{The Bessel--Stirling numbers of the second kind, $\mStirling{n}{k}_2$ (OEIS \href{https://oeis.org/A383235}{A383235}). The values in the upper triangle are all 0.}
    \label{tab-bessel-stirling2}
\end{table}

The row sums of this array visibly reproduce the Bessel--Bell numbers \href{https://oeis.org/A007472}{A007472}:
\[
\sum_{k=0}^{n} \mStirling{n}{k}_2 = (1, 1, 1, 3, 9, 29, 105, 431, 1969, 9785, 52145, 296155, 1787385, \ldots).
\]
In this section we prove this and more via the operational calculus of \emph{carrier systems}; Section \ref{sec-general-case} gives an independent, completely elementary proof valid for every $m$.

\subsection{Carrier systems and Bessel--Touchard polynomials}

Recall from \eqref{eq-scaling-operator} that Touchard polynomials arise by applying the exponential scaling operator $e^{txD}$ to the carrier $e^x$, whose salient property is that the operator $xD$ maps the module $\{P(x)e^x : P \text{ polynomial}\}$ to itself. Modified Bessel functions are not preserved individually by $xD$, but the \emph{pair} $(I_0, I_1)$ is, in the following precise sense. From the derivative relations (\href{https://dlmf.nist.gov/10.29.E2}{10.29.2}) \cite{NIST:DLMF}:
\begin{equation}\label{eq-xd-bessel}
    \begin{aligned}
        (xD)I_0(x) &= x I_1(x) \\
        (xD)I_1(x) &= x I_0(x)-I_1(x).
    \end{aligned}
\end{equation}
We abstract exactly this structure:

\begin{definition}[Carrier system]\label{def-carrier-system}
Let $m \geq 1$. A \emph{carrier system of modulus $m$} for the operator $xD$ is a family $(f_0, f_1, \ldots, f_{m-1})$ of differentiable functions on a common open subset of $(0,\infty)$ (or of formal power series) satisfying the cyclic first-order system
\begin{equation}\label{eq-carrier-system}
(xD + r)\,f_r = x\,f_{(r+1) \bmod m}, \qquad r = 0, 1, \ldots, m-1.
\end{equation}
The function $f_0$ is called the \emph{primary carrier} of the system.
\end{definition}

For $m = 1$ the single condition reads $(xD)f_0 = x f_0$, i.e.\ $f_0' = f_0$, whose solutions are the constant multiples of $e^x$; we take the normalized solution $e^x$, the exponential carrier of the classical theory. For $m = 2$, equations \eqref{eq-xd-bessel} say precisely that $(I_0, I_1)$ is a carrier system: $(xD + 0)I_0 = xI_1$ and $(xD + 1)I_1 = xI_0$. The computation in the proof of Theorem \ref{thm-besselk-dobinski} shows that $(K_0, -K_1)$ is a second, independent carrier system of modulus 2. Chaining the $m$ conditions of \eqref{eq-carrier-system} shows that a primary carrier always satisfies the eigenvalue-like equation
\begin{equation}\label{eq-carrier-ode}
(xD)^m f_0 = x^m f_0,
\end{equation}
which for $m=2$ is the modified Bessel equation \eqref{eq-m2-ode}---the same equation, we emphasize, that governs the e.g.f.s of the 2-Bell numbers under the substitution $t = e^x$.

The next theorem shows that iterating $xD$ on a primary carrier manufactures the Bessel--Stirling triangle, in exactly the way that iterating $xD$ on $e^x$ manufactures the Stirling triangle. We state it for modulus 2; the statement and proof for general $m$ are identical (Theorem \ref{thm-general-carrier-expansion}).

\begin{theorem}[Carrier expansion]\label{thm-carrier-expansion}
Let $(f_0, f_1)$ be a carrier system of modulus 2 for $xD$. Then for all $n \geq 0$,
\[
(xD)^n f_0(x) \;=\; \sum_{k=0}^{n} \mStirling{n}{k}_2\, x^k\, f_{k \bmod 2}(x),
\]
and consequently, by the scaling operator identity \eqref{eq-scaling-operator},
\[
f_0(x e^t) = e^{txD}f_0(x) = \sum_{n=0}^\infty\frac{t^n}{n!}\, V_n(x), \qquad V_n(x) = \sum_{k=0}^{n} \mStirling{n}{k}_2\, x^k\, f_{k \bmod 2}(x).
\]
\end{theorem}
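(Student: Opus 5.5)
The proof goes by induction on $n$, using only the carrier relations \eqref{eq-carrier-system} and the recurrence \eqref{eq-bs-recurrence}. The base case $n=0$ reads $f_0 = \mStirling{0}{0}_2 x^0 f_0$, which holds since $\mStirling{0}{k}_2=\delta_{0,k}$.

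For the inductive step, assume $(xD)^n f_0=\sum_k \mStirling{n}{k}_2 x^k f_{k\bmod 2}$ and apply $xD$ termwise. The key computation is the action of $xD$ on a single term $x^k f_r$ with $r=k\bmod 2$. By the product rule,
\[
xD\bigl(x^k f_r\bigr) = x^k\,(xD+k) f_r = x^k\,(xD+r)f_r + (k-r)\,x^k f_r .
\]
Substituting \eqref{eq-carrier-system} gives
\[
xD\bigl(x^k f_r\bigr) = x^{k+1} f_{(r+1)\bmod 2} + 2\floor{k/2}\, x^k f_{k\bmod 2},
\]
because $k-r = 2\floor{k/2}$. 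Since $(r+1)\bmod 2=(k+1)\bmod 2$, the first term has exactly the form $x^{k+1}f_{(k+1)\bmod 2}$ required for index $k+1$.

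Summing over $k$ and reindexing the first sum ($k\to k-1$), the coefficient of $x^k f_{k\bmod 2}$ in $(xD)^{n+1}f_0$ becomes $2\floor{k/2}\mStirling{n}{k}_2+\mStirling{n}{k-1}_2$. This equals $\mStirling{n+1}{k}_2$ by \eqref{eq-bs-recurrence}. The boundary terms are harmless: $\mStirling{n}{-1}_2=0$ and $\mStirling{n}{n+1}_2=0$, so the range $0\le k\le n+1$ is correct.

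For the second display, I will use \eqref{eq-scaling-operator}. The identity $e^{txD}f_0(x)=f_0(xe^t)$ expands as $\sum_n \frac{t^n}{n!}(xD)^n f_0(x)$, and substituting the first part gives the expansion with the stated $V_n$. I will justify \eqref{eq-scaling-operator} as follows. For analytic $f_0$ (or formal power series) it holds because $(xD)^n x^j = j^n x^j$, so $e^{txD}x^j=(xe^t)^j$, extended by linearity. Equivalently, $g(t)=f_0(xe^t)$ satisfies $\partial_t^n g|_{t=0}=(xD)^n f_0(x)$, which is Taylor's theorem in $t$.

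The only delicate point is the analytic justification of this expansion when $f_0$ is merely differentiable on an open subset of $(0,\infty)$, as Definition \ref{def-carrier-system} allows. To handle it, I will note that the carrier system \eqref{eq-carrier-system} is a linear ODE system with analytic coefficients on $(0,\infty)$, so its solutions are automatically real-analytic. Hence $t\mapsto f_0(xe^t)$ is analytic near $t=0$, and the series identity holds there, or at least as formal Taylor series in $t$. The algebraic core of the argument, the identity $k-r=2\floor{k/2}$, is where the floor coefficient comes from, and it is routine.
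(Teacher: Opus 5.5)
Your proof is correct and follows the paper's own argument: induction on $n$, using the derivation identity $(xD)(x^k f_r) = (k-r)\,x^k f_r + x^{k+1} f_{(k+1)\bmod 2}$ together with $k-r = 2\floor{k/2}$ to reproduce the recurrence \eqref{eq-bs-recurrence}, and reading off the second display from Taylor's theorem applied to $t \mapsto f_0(xe^t)$. Your added remark is a valid small refinement of the paper's, which simply restricts the analytic reading of the second display to analytic $f_0$: any differentiable carrier system is automatically real-analytic on each component of its domain, because it solves a linear first-order system with coefficients analytic on $(0,\infty)$.
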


The first display is a finite algebraic identity, valid verbatim for formal power series. The second is an identity of formal power series in $t$; when $f_0$ is analytic on an open set containing $\{xe^s : \lvert s \rvert \leq \lvert t \rvert\}$, it holds moreover as an equality of analytic functions, by Taylor's theorem applied to $t \mapsto f_0(xe^t)$. All carrier systems used in this paper---$(I_0, I_1)$, $(K_0, -K_1)$, and the hyper-Bessel systems of Section \ref{sec-general-case}---consist of functions analytic on $(0,\infty)$, so for them the expansion is valid for all $x > 0$ and all real $t$.

\begin{proof}
Induction on $n$. The case $n = 0$ is the initial condition $\mStirling{0}{k}_2 = \delta_{0,k}$. For the step, apply $xD$ to a single term $x^k f_r$ with $r = k \bmod 2$. Since $xD$ is a derivation and $(xD)x^k = kx^k$,
\[
\begin{aligned}
(xD)\big(x^k f_r\big) &= k\,x^k f_r + x^k (xD)f_r = k\,x^k f_r + x^k\big(x f_{(r+1) \bmod 2} - r f_r\big) \\
&= (k - r)\,x^k f_r + x^{k+1} f_{(k+1) \bmod 2}.
\end{aligned}
\]
Because $r = k \bmod 2$, the factor $k - r$ equals $2\floor{k/2}$. Summing over $k$ with weights $\mStirling{n}{k}_2$ and collecting the coefficient of $x^k f_{k \bmod 2}$ yields exactly the recurrence \eqref{eq-bs-recurrence}, i.e.\ the coefficient of $x^kf_{k\bmod 2}$ in $(xD)^{n+1}f_0$ is $2\floor{k/2} \mStirling{n}{k}_2 + \mStirling{n}{k-1}_2 = \mStirling{n+1}{k}_2$.
\end{proof}

Taking $(f_0, f_1) = (I_0, I_1)$, the first few polynomials $V_n$ are:
\[
\begin{aligned}
V_0(x) &= I_0(x) \\
V_1(x) &= x\,I_1(x) \\
V_2(x) &= x^2\,I_0(x) \\
V_3(x) &= 2x^2\,I_0(x) + x^3\,I_1(x) \\
V_4(x) &= 4x^2\,I_0(x) + 4x^3\,I_1(x) + x^4\,I_0(x) \\
V_5(x) &= 8x^2\,I_0(x) + 12x^3\,I_1(x) + 8x^4\,I_0(x) + x^5\,I_1(x).
\end{aligned}
\]
Notice two things. First, these polynomials are generated in very much the same way as the Touchard polynomials, but by applying the exponential scaling operator to $I_0(x)$ instead of to $e^x$. Second, each monomial $x^k$ is accompanied by a modified Bessel carrier function: $I_0$ ``carries'' the even degrees of $x$ and $I_1$ ``carries'' the odd degrees. The proper analogue of the Touchard polynomials should not have these carrier functions attached. Thankfully, the solution is the same one that produced integer coefficients for the e.g.f.s in Section \ref{2-bell-integer-coefs}: combine the two independent carrier systems, with weights chosen so that the general Wronskian identity \eqref{eq-bessel-wronskian-general} cancels all carriers.

\begin{theorem}[Bessel--Touchard polynomials]\label{thm-bb-polynomials}
Define the bivariate generating function
\[
G(t,x) = x\big[K_0(x)+K_1(x)\big]\, I_0(x e^t) + x\big[I_1(x)-I_0(x)\big]\,K_0(xe^t).
\]
Then, for $x > 0$,
\begin{equation*}
G(t,x) = \sum_{n=0}^\infty \frac{t^n}{n!}\,\mathscr{B}_n(x), \qquad \mathscr{B}_n(x) = \sum_{k=0}^{n}\mStirling{n}{k}_2\, x^k .
\end{equation*}
We call $\mathscr{B}_n(x)$ the Bessel--Touchard (or Bessel--Bell) polynomials.
\end{theorem}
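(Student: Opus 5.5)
The plan is to apply the carrier expansion (Theorem \ref{thm-carrier-expansion}) to the two independent carrier systems $(I_0,I_1)$ and $(K_0,-K_1)$ separately. We then take the stated linear combination and let the Wronskian identity \eqref{eq-bessel-wronskian-general} collapse every carrier to the constant $1$.

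First, since both systems are analytic on $(0,\infty)$, Theorem \ref{thm-carrier-expansion} gives, for $x>0$ and real $t$,
\[
I_0(xe^t)=\sum_{n\ge0}\frac{t^n}{n!}\sum_{k=0}^n\mStirling{n}{k}_2 x^k I_{k\bmod 2}(x),
\]
\[
K_0(xe^t)=\sum_{n\ge0}\frac{t^n}{n!}\sum_{k=0}^n\mStirling{n}{k}_2 x^k \tilde K_{k\bmod 2}(x),
\]
where $\tilde K_0=K_0$ and $\tilde K_1=-K_1$. Writing $p(x)=x[K_0(x)+K_1(x)]$ and $q(x)=x[I_1(x)-I_0(x)]$, which do not depend on $t$, the coefficient of $\tfrac{t^n}{n!}x^k\mStirling{n}{k}_2$ in $G$ is $p(x)I_r(x)+q(x)\tilde K_r(x)$ with $r=k\bmod 2$. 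It therefore suffices to check that this combination equals $1$ for both $r=0$ and $r=1$.

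This is a direct computation with \eqref{eq-bessel-wronskian-general} at $\nu=0$, $z=x$, namely $I_0K_1+I_1K_0=1/x$. For $r=0$:
\[
x\big[(K_0+K_1)I_0+(I_1-I_0)K_0\big]=x\big[I_0K_1+I_1K_0\big]=1.
\]
For $r=1$:
\[
x\big[(K_0+K_1)I_1-(I_1-I_0)K_1\big]=x\big[I_1K_0+I_0K_1\big]=1.
\]
Hence every carrier is replaced by $1$, and $G(t,x)=\sum_n \tfrac{t^n}{n!}\sum_k\mStirling{n}{k}_2x^k$.

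The only step needing care is analytic rather than algebraic. We must justify the series expansions as equalities of functions, not just formal power series in $t$, so that multiplying by $p(x)$ and $q(x)$ and adding term by term is legitimate. The remark after Theorem \ref{thm-carrier-expansion} supplies this: both expansions converge for $x>0$ and all real $t$, and a linear combination with $t$-independent coefficients of two convergent series is again convergent and can be taken termwise. This is also the reason for the restriction $x>0$, since $K_0$ is not analytic at $0$. As a sanity check, setting $x=1$ recovers the coefficients $p=K_0(1)+K_1(1)$ and $q=I_1(1)-I_0(1)$ of A007472 from Theorem \ref{thm-m2-egf}, with $t$ in the role of the e.g.f. variable.
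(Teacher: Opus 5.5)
Your proof is correct and follows the paper's own argument. You expand $I_0(xe^t)$ and $K_0(xe^t)$ through the carrier expansion for the systems $(I_0,I_1)$ and $(K_0,-K_1)$, then use the Wronskian $I_0K_1+I_1K_0=1/x$ to show that the even-$k$ and odd-$k$ carrier combinations both equal $1$. The only cosmetic difference is that the paper first solves the $2\times 2$ linear system (determinant $-1/x$) to show these weights are the unique ones that work, whereas you verify the given weights directly, which is all the statement requires.
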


\begin{proof}
Apply Theorem \ref{thm-carrier-expansion} to the two carrier systems $(I_0, I_1)$ and $(K_0, -K_1)$:
\[
\begin{aligned}
I_0(xe^t) &= \sum_{n=0}^\infty\frac{t^n}{n!}\sum_{k=0}^n \mStirling{n}{k}_2 x^k\, I_{k \bmod 2}(x), \\
K_0(xe^t) &= \sum_{n=0}^\infty\frac{t^n}{n!}\sum_{k=0}^n \mStirling{n}{k}_2 x^k\, \varkappa_k(x), \qquad \varkappa_k = \begin{cases} K_0 & k \text{ even}\\ -K_1 & k \text{ odd.}\end{cases}
\end{aligned}
\]
Now form the combination $p(x)\,I_0(xe^t) + q(x)\,K_0(xe^t)$ with as yet undetermined weights. The coefficient of $\mStirling{n}{k}_2 x^k$ in the $t$-expansion is
\[
\begin{cases}
p(x)\,I_0(x) + q(x)\,K_0(x) & k \text{ even},\\
p(x)\,I_1(x) - q(x)\,K_1(x) & k \text{ odd}.
\end{cases}
\]
We want both expressions to equal 1 identically in $x$. Solving the linear system, the determinant is $-\big(I_0(x)K_1(x) + I_1(x)K_0(x)\big) = -1/x$ by the Wronskian identity \eqref{eq-bessel-wronskian-general}, so the unique solution is
\[
p(x) = x\big[K_0(x) + K_1(x)\big], \qquad q(x) = x\big[I_1(x) - I_0(x)\big].
\]
Indeed, one checks directly that
\[
\begin{aligned}
p\,I_0 + q\,K_0 &= x\big[K_1 I_0 + I_1 K_0\big] = 1, \\
p\,I_1 - q\,K_1 &= x\big[K_0 I_1 + I_0 K_1\big] = 1.
\end{aligned}
\]
With this choice every carrier collapses to 1 and the coefficient of $t^n/n!$ becomes $\sum_k \mStirling{n}{k}_2 x^k$, as claimed. (All series converge absolutely for $x > 0$ and all real $t$, justifying the rearrangements.)
\end{proof}

\begin{corollary}[Row sums and parity row sums]\label{cor-2bell-rowsums}
Setting $x=1$ in Theorem \ref{thm-bb-polynomials} and comparing with Theorem \ref{thm-m2-egf}:
\[
\mathscr{B}_n(1) = \sum_{k=0}^{n}\mStirling{n}{k}_2 = B^{(2)}_n \quad (\textup{A007472}),
\]
since $G(t,1) = [K_0(1)+K_1(1)]I_0(e^t) + [I_1(1)-I_0(1)]K_0(e^t)$ is precisely the e.g.f.\ of A007472. Moreover, choosing instead the weights $p(x) = xK_1(x),\, q(x) = xI_1(x)$ (which map even-carrier coefficients to 1 and odd-carrier coefficients to 0) and $p(x) = xK_0(x),\, q(x) = -xI_0(x)$ (the reverse) shows in the same way that
\[
\sum_{\substack{k=0 \\ k \,\mathrm{even}}}^{n}\mStirling{n}{k}_2 = B^{(2,0)}_n \quad (\textup{A351143}), \qquad
\sum_{\substack{k=0 \\ k \,\mathrm{odd}}}^{n}\mStirling{n}{k}_2 = B^{(2,1)}_n \quad (\textup{A351028}).
\]
\end{corollary}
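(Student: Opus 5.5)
The plan is to specialize Theorem \ref{thm-bb-polynomials} and its proof to $x=1$, and then identify the resulting e.g.f.s using Theorem \ref{thm-m2-egf}.

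\textbf{Full row sums.} At $x=1$ we have
\[
G(t,1)=[K_0(1)+K_1(1)]\,I_0(e^t)+[I_1(1)-I_0(1)]\,K_0(e^t).
\]
By Theorem \ref{thm-bb-polynomials} its $t^n/n!$ coefficient is $\mathscr{B}_n(1)=\sum_k\mStirling{n}{k}_2$. The pair of coefficients is exactly $(p,q)$ from the A007472 row of the table in Theorem \ref{thm-m2-egf}. So $G(t,1)$ is the e.g.f.\ of $B^{(2)}$, and comparing coefficients gives the first claim. A safer alternative, which avoids relying on the table alone, is to note that Theorem \ref{thm-m2-egf} says the e.g.f.\ is uniquely fixed by its first two coefficients. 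It then suffices to check $\mathscr{B}_0(1)=\mathscr{B}_1(1)=1$ from Table \ref{tab-bessel-stirling2}.

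\textbf{Parity row sums.} I would reuse the linear-algebra step in the proof of Theorem \ref{thm-bb-polynomials}. Form $p(x)I_0(xe^t)+q(x)K_0(xe^t)$ and expand both terms with Theorem \ref{thm-carrier-expansion}, applied to the carrier systems $(I_0,I_1)$ and $(K_0,-K_1)$. The coefficient of $\mStirling{n}{k}_2x^k$ is then $pI_0+qK_0$ for even $k$ and $pI_1-qK_1$ for odd $k$.

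\emph{Even sums.} Take $p=xK_1$, $q=xI_1$. Then
\[
pI_0+qK_0=x(I_0K_1+I_1K_0)=1 \quad\text{by \eqref{eq-bessel-wronskian-general}},\qquad pI_1-qK_1=x(K_1I_1-I_1K_1)=0 .
\]
So the $t^n/n!$ coefficient is the even-$k$ sum.

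\emph{Odd sums.} Take $p=xK_0$, $q=-xI_0$. Then $pI_0+qK_0=0$ and $pI_1-qK_1=x(K_0I_1+I_0K_1)=1$, so the coefficient is the odd-$k$ sum.

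\textbf{Identification at $x=1$.} At $x=1$ the two weight pairs become $(K_1(1),I_1(1))$ and $(K_0(1),-I_0(1))$. These are precisely the A351143 and A351028 rows of Theorem \ref{thm-m2-egf}, so comparing coefficients gives $B^{(2,0)}_n$ and $B^{(2,1)}_n$.

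\textbf{Convergence.} Rearranging the series is justified exactly as in Theorem \ref{thm-bb-polynomials}, since all series converge absolutely for $x>0$ and real $t$.

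\textbf{Main obstacle.} The only real difficulty is the identification of the e.g.f.s, and it is already handled by Theorem \ref{thm-m2-egf}. As a cross-check that needs no Bessel constants, the even and odd sums for $n=0,1$ are $(1,0)$ and $(0,1)$. Each generating function also satisfies the 2-Bell ODE of Proposition \ref{prop:mbell-egf}, because $f(e^t)$ does for any solution $f$ of \eqref{eq-m2-ode}. By uniqueness of the solution with given initial data, the sequences must therefore be the primitive ones.
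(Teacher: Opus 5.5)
Your proposal is correct and follows essentially the same route as the paper: specialize the Wronskian-weighted combinations $p(x)I_0(xe^t)+q(x)K_0(xe^t)$ to $x=1$ and match the resulting weight pairs with the rows of the table in Theorem \ref{thm-m2-egf}. Your extra uniqueness check, using the 2-Bell ODE together with the initial values $(1,0)$ and $(0,1)$, is a harmless and slightly more self-contained supplement; the paper's argument does not need it.
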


\begin{proof}
For the first display, Theorem \ref{thm-m2-egf} identifies $G(t,1)$ as the e.g.f.\ of A007472, while Theorem \ref{thm-bb-polynomials} expands the same function as $\sum_n \mathscr{B}_n(1)t^n/n!$. For the parity-restricted sums, the weights $p(x) = xK_1(x), q(x) = xI_1(x)$ solve $pI_0 + qK_0 = 1$, $pI_1 - qK_1 = 0$ (again via the Wronskian), so the resulting bivariate function expands with only the even-$k$ terms surviving; at $x = 1$ it equals $K_1(1)I_0(e^t) + I_1(1)K_0(e^t)$, the e.g.f.\ of A351143 by Theorem \ref{thm-m2-egf}. The odd case is identical.
\end{proof}

Corollary \ref{cor-2bell-rowsums} reveals a satisfying structure: \emph{the two primitive 2-Bell sequences are interleaved inside a single triangle}---A351143 collects the even-indexed columns and A351028 the odd-indexed ones, while their sum A007472 collects them all. In Section \ref{sec-general-case} (Corollary \ref{cor-mbell-rowsums}) we will prove by elementary means that this picture persists for every $m$, with the $m$ primitive $m$-Bell sequences appearing as the residue-class row sums of the $m$-Stirling triangle.

The Bessel--Stirling numbers are a novel generalization of the Stirling numbers and join a broader family of two-term recurrences \cite{mansour_general_2012,neuwirth_recursively_2001,barbero_g_bivariate_2014,spiveySolutionsGeneralCombinatorial2011}. We return to the connection between this new array and previous generalizations of the Stirling numbers in Section \ref{sec-final-remarks}. Note also that the Bessel--Stirling numbers are distinct from the $r$-Stirling numbers of Broder \cite{broder1984}, despite the superficially similar subscript notation.

Due to the form of their recurrence, the Bessel--Stirling numbers admit many identities common to other such two-term-recurrence triangular arrays (see Appendices A and B). The most important one is an explicit formula (proof given in Appendix A):

\begin{theorem}[Explicit formula for the Bessel--Stirling numbers of the second kind]\label{thm-bs-formula}
For all $n \geq 2$ and $k \geq 2$,
\[
    \mStirling{n}{k}_2 = \sum_{j=1}^{\floor{k/2}}(2j)^{n-k}\, c_{k,j} \;+\; (n-k+1)\sum_{j=1}^{\floor{(k-1)/2}}(2j)^{n-k}\, d_{k,j},
\]
where, for $1 \leq r \leq \floor{(k-1)/2}$,
\[
    d_{k,r} = \frac{(-r)^{k-1}}{r!\,r!\,(\floor{\frac{k}{2}}-r)!\,(\floor{\frac{k-1}{2}}-r)!},
\]
and, for $1 \leq r \leq \floor{k/2}$,
\[
    c_{k,r} = \begin{cases}
                - d_{k,r} \left(r(2H_{r-1}-H_{\floor{\frac{k}{2}}-r}-H_{\floor{\frac{k-1}{2}}-r})-k+3\right) & \text{if }\, 1 \leq r < k/2\\
                \frac{r^{2r}}{r!\,r!} & \text{if }\, r = k/2.
            \end{cases}
\]
For odd $k$ the two sums have the same range and may be merged; for even $k$ the endpoint $r = k/2$ carries a $c$-term but \emph{no} $d$-term---it corresponds to a simple rather than a double pole in the partial-fraction analysis of Appendix A---which is why the $d$-sum stops at $\floor{(k-1)/2}$. For $2 \leq n < k$ the right-hand side vanishes, as it must.

Alternatively, the following limit representation is valid for all $n\ge0, k\ge 0$:
\[
\begin{aligned}
    \mStirling{n}{k}_2 &= (-1)^{k-1}\, 2^{n-k} \sum_{j=0}^{\floor{k/2}} \lim_{t \to j} \left[t^{n-1}\,
        \frac
            {n-2-t\,\Psi_k(t)}
            {\Gamma(t+1)^{2}\, \Gamma\left(\floor{\tfrac{k}{2}}-t+1\right) \Gamma\left(\floor{\tfrac{k-1}{2}}-t+1\right)}\right],
        \\
        \Psi_k(t) &= 2\psi(t)-\psi\left(\floor{\tfrac{k}{2}}-t+1\right)-\psi\left(\floor{\tfrac{k-1}{2}}-t+1\right),
\end{aligned}
\]
where $H_n$ is the $n$-th harmonic number, $\psi(x)$ is the digamma function, and $\Gamma(x)$ is the gamma function.
\end{theorem}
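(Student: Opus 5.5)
The plan is to derive the formula from the column (vertical) generating function of the triangle, then extract coefficients by partial fractions. This is the standard route for two-term recurrences of the form $\mStirling{n+1}{k}_2 = w_k\mStirling{n}{k}_2 + \mStirling{n}{k-1}_2$ with weights $w_k = 2\floor{k/2}$.

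\textbf{Step 1: the column generating function.} Put $C_k(z) = \sum_{n\ge0}\mStirling{n}{k}_2 z^n$. The recurrence gives $C_k(z) = z\,C_{k-1}(z)/(1-w_k z)$, with $C_0 = 1$. Since $w_0=w_1=0$, we get $C_1 = z$. Iterating,
\[
C_k(z) = \frac{z^k}{\prod_{i=2}^{k}(1-2\floor{i/2}z)}, \qquad k\ge 2.
\]
Each $j$ with $1\le j\le \floor{k/2}$ is the value of $\floor{i/2}$ for both $i=2j$ and $i=2j+1$ when $2j+1\le k$. So the factor $(1-2jz)$ appears squared for $j\le\floor{(k-1)/2}$. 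The value $j=k/2$ (which occurs only for even $k$) appears once. Hence
\[
C_k(z) = z^k \prod_{j=1}^{\floor{k/2}}(1-2jz)^{-1}\prod_{j=1}^{\floor{(k-1)/2}}(1-2jz)^{-1}.
\]

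\textbf{Step 2: partial fractions.} Write $z^{-k}C_k(z) = \sum_j \frac{\alpha_j}{1-2jz} + \sum_{j\le\floor{(k-1)/2}}\frac{\beta_j}{(1-2jz)^2}$. Then use $[z^N](1-2jz)^{-1}=(2j)^N$ and $[z^N](1-2jz)^{-2}=(N+1)(2j)^N$ with $N=n-k$. This produces exactly the two sums in the statement: $(2j)^{n-k}$ weighted by a constant $c_{k,j}$, plus $(n-k+1)(2j)^{n-k}$ weighted by $d_{k,j}$. One point needs care: the partial fraction has no polynomial part when $k\ge2$, because the denominator has degree $k-1\ge 1$ while the numerator is the constant $1$.

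It is convenient to substitute $u = 1/(2z)$. The double-pole coefficient is then
\[
\beta_r = \lim_{z\to 1/(2r)}(1-2rz)^2 z^{-k}C_k(z) = \prod_{i\ne r}\Bigl(1-\tfrac{i}{r}\Bigr)^{-1}\prod_{i\ne r}\Bigl(1-\tfrac{i}{r}\Bigr)^{-1},
\]
with the two products running over the respective ranges $1\le i\le\floor{k/2}$ and $1\le i\le\floor{(k-1)/2}$. Each product $\prod_{i\ne r}(r-i)$ splits as $(r-1)!\,(-1)^{M-r}(M-r)!$. Collecting the powers of $r$ (a total of $k-3$ factors of $r$) and the signs should yield $d_{k,r} = (-r)^{k-1}/(r!\,r!\,(\floor{k/2}-r)!\,(\floor{(k-1)/2}-r)!)$. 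I will verify this bookkeeping and not redo it here.

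\textbf{Step 3: the simple-pole coefficients.} These are the main obstacle. For a double pole, the coefficient is $\alpha_r = -\frac{1}{2r}\frac{d}{dz}\bigl[(1-2rz)^2 F(z)\bigr]$ at $z=1/(2r)$, up to the normalization of the variable. I will compute it by logarithmic differentiation of $\beta$ viewed as a function of the pole location. The derivative of $\log$ of $\prod_{i\ne r}(1-2iz)^{-1}$ evaluated at $z=1/(2r)$ yields sums $\sum_{i\ne r}\frac{i}{r-i}$ over the two ranges. Each sum can be rewritten as $-(M-1) + r\,(H_{r-1} - H_{M-r})$. Adding the two ranges gives $r(2H_{r-1}-H_{\floor{k/2}-r}-H_{\floor{(k-1)/2}-r})$, plus the linear terms. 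Those linear terms, together with the contribution of the $z^{-k}$ prefactor normalization, should assemble into $-k+3$. This reproduces the stated $c_{k,r}=-d_{k,r}(\cdots)$.

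The single simple pole $r=k/2$ for even $k$ has residue given by direct evaluation. It equals $\prod_{i<r}(1-i/r)^{-2}=r^{2r}/(r!\,r!)$ after the analogous factorial bookkeeping. Finally, for $2\le n<k$ the vanishing follows because the identity $\sum_j(\alpha_j+\beta_j(N+1))(2j)^N=[z^N]z^{-k}C_k$ holds for the analytic continuation to negative $N$. Equivalently, $z^{-k}C_k$ vanishes at $z=\infty$ to order $k-1$, giving the moment relations $\sum_j(\cdots)(2j)^{-s}=0$ for $1\le s\le k-2$.

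\textbf{Step 4: the limit representation.} I will treat it as a unified rewriting of the same partial-fraction sum. Replace $r$ by a continuous variable $t$ and write factorials as Gamma functions. At nonnegative integers $t=j$, the reciprocal Gammas $1/\Gamma(\floor{k/2}-t+1)$, etc., are regular. The expression $n-2-t\Psi_k(t)$ packages both the $(n-k+1)$ factor and the harmonic correction, using $\psi$ differences for $H$. The limit handles the $j=0$ term (where $\psi(0)$ diverges but $1/\Gamma(t+1)^2$ stays finite while $t^{n-1}$ may vanish) and the poles of $\psi$ at the endpoint $j=\floor{k/2}$ when $k$ is even. There the pole of $\psi$ cancels the zero of $1/\Gamma$ and recovers the $c_{k,k/2}$ term.

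I expect the fiddly part to be verifying the $j=0$ and small-$k$ limits, which should give $\delta$-like values for $k\in\{0,1\}$.
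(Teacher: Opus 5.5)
\textbf{Steps 1--3 (finite formula).} This part follows the paper's Appendix A essentially step for step: the column o.g.f., cover-up for $d_{k,r}$ and for the simple pole $c_{k,k/2}$, the logarithmic derivative of $(1-2rz)^2P_k(z)$ at $z=1/(2r)$ for the double-pole coefficients $c_{k,r}$, and the expansion of $P_k$ at $z=\infty$ for $2\le n<k$. One small correction: write $A=\floor{k/2}$ and $B=\floor{(k-1)/2}$. The linear terms $-(A-1)-(B-1)$ already equal $3-k$ on their own. There is no further contribution from a $z^{-k}$ prefactor, because you had already divided it out.

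\textbf{The gap is in Step 4.} The limit sum is not merely a rewriting of the partial-fraction sum. It contains an extra $j=0$ term, and that term is not zero in general.

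\emph{The $j=0$ term.} As $t\to0$ we have $2t\psi(t)=-2-2\gamma t+O(t^2)$ and $\psi(A-t+1)+\psi(B-t+1)=H_A+H_B-2\gamma+O(t)$. So the numerator is $n+(H_A+H_B)t+O(t^2)$. Hence, for $k\ge1$, the $j=0$ limit is $0$ for $n\ge2$, but it equals $1/(A!\,B!)$ for $n=1$ and $(H_A+H_B)/(A!\,B!)$ for $n=0$. Your remark that $t^{n-1}$ ``may vanish'' only covers $n\ge2$.

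\emph{The other terms.} The interior and endpoint terms reproduce the finite right-hand side $F_k(n)$ as a polynomial identity in $n$. But $F_k(1)$ and $F_k(0)$ are \emph{not} zero. They are minus the Laurent coefficients of $P_k$ at $z^{-(k-1)}$ and $z^{-k}$, which lie just outside the range $1\le s\le k-2$ used in your vanishing argument. So for $k\ge2$ and $n\in\{0,1\}$, the limit representation holds only because of a cancellation you have not established.

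\emph{The missing step.} Compute the two leading terms of $P_k$ at infinity:
$P_k(z)=\frac{(-1)^{k-1}}{2^{k-1}A!\,B!}z^{1-k}\bigl(1+\tfrac{H_A+H_B}{2}z^{-1}+O(z^{-2})\bigr)$.
This uses $\prod_{j=2}^{k}\floor{j/2}=A!\,B!$ and $\sum_{j=2}^{k}\floor{j/2}^{-1}=H_A+H_B$. It gives $F_k(1)=(-1)^k/(2^{k-1}A!\,B!)$ and $F_k(0)=(-1)^k(H_A+H_B)/(2^kA!\,B!)$. These cancel exactly against $(-1)^{k-1}2^{n-k}$ times the $j=0$ limits.

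\emph{Small $k$.} Two cases must also be done separately:
\begin{itemize}
\item $k=1$: only $j=0$ occurs, and it gives $\delta_{n,1}$ directly.
\item $k=0$: here $\Gamma(B-t+1)=\Gamma(-t)$ is itself singular at $t=0$, so the $j=0$ computation above does not apply and needs its own expansion.
\end{itemize}
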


The proof proceeds by partial-fraction decomposition of the ordinary generating function of the $k$-th column,
\begin{equation}\label{eq-column-ogf}
A_k(x) = \sum_{n\ge0} \mStirling{n}{k}_2\, x^n = \frac{x^k}{\prod_{j=1}^k\left(1-2\floor{j/2}x\right)},
\end{equation}
which itself follows directly from the recurrence \eqref{eq-bs-recurrence} (see Appendix A). Two useful byproducts of \eqref{eq-column-ogf} are worth recording here. First, explicit formulas for the low columns and the subdiagonal:
\[
    \begin{aligned}
        \mStirling{n}{2}_2 &= 2^{n-2}, & \quad n \ge 2\ \ (\text{rel. } \href{https://oeis.org/A000079}{A000079})\\
        \mStirling{n}{3}_2 &= (n-2)2^{n-3}, & \quad n \ge 3\ \ (\text{rel. } \href{https://oeis.org/A001787}{A001787})\\
        \mStirling{n}{4}_2 &= 2^{2n-6}-(n-1)\,2^{n-4}, & n\geq 4 \ \ (\text{rel. } \href{https://oeis.org/A100575}{A100575})\\
        \mStirling{n}{5}_2 &= n\,2^{n-5}+(n-6)\, 2^{2n-8}, & n\geq 5 \ \ (\text{rel. } \href{https://oeis.org/A158681}{A158681})\\
        \mStirling{n}{n-1}_2 &= \floor{(n-1)^2/2}, & n \ge 1 \ \ (\text{rel. } \href{https://oeis.org/A007590}{A007590})
    \end{aligned}
\]
(with the caveat that the sequences listed at the end of each line are offset or scaled). Second, expanding the product in \eqref{eq-column-ogf} as a geometric series shows that, for $n \geq k$, $\mStirling{n}{k}_2$ is the complete homogeneous symmetric polynomial $h_{n-k}$ evaluated at the weights $2\floor{1/2},$ $2\floor{2/2},$ $\ldots,$ $2\floor{k/2}$; we return to this symmetric-function viewpoint, and its dual for the first kind, in Section \ref{sec-first-kind} (Proposition \ref{prop-symmetric-functions}).

Now that we understand the analytic properties of the Bessel--Stirling and Bessel--Bell numbers and their basic recurrences, the natural next question is: what do these numbers count?

\section{Combinatorial interpretation of the Bessel--Stirling numbers of the second kind and the Bessel--Bell numbers}\label{sec-combinatorial}
\noindent The Bessel--Stirling numbers $\mStirling{n}{k}_2$ count the number of ways to place $n$ labeled marbles into $k$ unlabeled compartments following a parity-constrained urn model.
\begin{theorem}[The two-compartment urn model]\label{thm-urn-model}
The Bessel--Stirling number of the second kind $\mStirling{n}{k}_2$ counts the number of ways to arrange $n$ labeled marbles (placed one at a time in the order $1, 2, \ldots, n$) in $k$ total compartments distributed across urns with two unlabeled compartments each, subject to the following constraints:
\begin{itemize}
    \renewcommand\labelitemi{--}
    \item You can place a marble in any compartment of any urn to begin
    \item You cannot place another marble in an urn's used compartment until the other compartment of that urn also contains at least one marble
    \item You can freely place marbles in any compartment of urns where both compartments have been used
    \item You cannot open a new urn until both compartments of all previously used urns contain at least one marble each
\end{itemize}
Here both the urns and the two compartments within each urn are indistinguishable: two placements count as the same arrangement precisely when one is obtained from the other by permuting urns and by swapping the compartments within urns; unused compartments (including the empty partner compartment of a partially filled urn) are suppressed; and $k$ counts the non-empty compartments. Equivalently, one may order the urns, and the compartments within each urn, canonically by first use---the constraints then make ``open a new compartment'' a single well-defined move at every step. Consequently, the Bessel--Bell numbers (\href{https://oeis.org/A007472}{A007472}) count the total number of such parity-constrained placements of $n$ labeled marbles, over all $k$.
\end{theorem}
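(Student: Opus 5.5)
The plan is to show that the number $U(n,k)$ of admissible placements of marbles $1,\dots,n$ using exactly $k$ non-empty compartments satisfies the same initial conditions and the same two-term recurrence \eqref{eq-bs-recurrence} as $\mStirling{n}{k}_2$. Induction on $n$ then gives $U(n,k)=\mStirling{n}{k}_2$. The claim about the Bessel--Bell numbers follows by summing over $k$ and invoking Corollary \ref{cor-2bell-rowsums}.

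First I fix a canonical representative of each equivalence class. Order the urns by the first marble placed in them, and order the two compartments within an urn by first use. An arrangement is then an ordered sequence of blocks, grouped consecutively into pairs; the last urn may hold only one block. Each urn's first block is opened before its second. The constraints mean that at any moment the configuration has a simple shape. Either every opened urn is full, so the number $k$ of non-empty compartments is even, or exactly one urn (the most recent) is half-filled, so $k$ is odd. In the even case there are $k$ compartments available to receive a marble, and "open a new compartment" means opening a new urn. In the odd case, the used compartment of the half-filled urn is forbidden, and so is every compartment of a full urn? No: full urns are freely available. Only the single used compartment of the half-filled urn is forbidden. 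So $k-1$ existing compartments are available, and the only new move is to open the partner compartment. In both cases exactly $2\floor{k/2}$ existing compartments may receive the next marble, and there is exactly one way to open a new compartment. Since the canonical ordering makes distinct move sequences give distinct arrangements, and every arrangement arises from its history, the count is a bijection with histories.

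Next I condition on where marble $n+1$ goes in an arrangement of $n+1$ marbles with $k$ non-empty compartments. If marble $n+1$ shares its compartment with earlier marbles, removing it leaves a valid arrangement of $n$ marbles with $k$ compartments. Validity is preserved because the constraints only restrict the prefix history. That smaller arrangement has $2\floor{k/2}$ legal slots, which gives the term $2\floor{k/2}\,U(n,k)$. If marble $n+1$ is alone, removing it leaves a valid arrangement with $k-1$ compartments, in which opening a new compartment was the unique legal move, which gives $U(n,k-1)$. The base cases are $U(0,k)=\delta_{0,k}$ (the empty arrangement) and $U(n,k)=0$ for $k<0$.

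The main obstacle is the bookkeeping in the first step. I must justify that indistinguishability of urns and of compartments introduces no overcounting or undercounting. In particular, the half-filled urn's empty partner must not be treated as a distinguishable choice, and swapping compartments inside a half-filled urn must identify with the canonical form. I would settle this by arguing that the canonical first-use ordering is invariant under the stated symmetries and is recoverable from the set partition together with its pairing structure. Summing the identity $U(n,k)=\mStirling{n}{k}_2$ over $k$ then yields A007472 by Corollary \ref{cor-2bell-rowsums}.
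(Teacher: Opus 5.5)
Your proposal is correct and follows essentially the same route as the paper: you induct on $n$ through the recurrence \eqref{eq-bs-recurrence}, counting exactly $2\floor{k/2}$ admissible old compartments (the single used compartment of the half-filled urn is frozen when $k$ is odd) plus one canonical ``open a new compartment'' move, and you secure distinctness of arrangements by recovering the placement history through successive removal of the largest marble. Your conditioning on the position of marble $n+1$ is just the reverse reading of the paper's forward placement argument combined with its closing remark on reconstructing histories.
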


Before diving into the proof, let us go through some examples to build intuition. Let $\frac{a}{b}\big|\frac{c}{d}$ represent 2 urns with vertical bars separating the urns and a horizontal bar separating the two compartments within an urn. The numbers that will replace the placeholder letters represent the marbles, labeled in the order in which they were placed (one at a time).
For $\mStirling{3}{2}_2$ we have two ways to place the marbles in two compartments of a single urn: $\frac{1\,3}{2}$ or $\frac{1}{2\,3}$.
For $\mStirling{3}{3}_2$ we have one way to place each marble in its own compartment (2 urns, 3 compartments used): $\frac{1}{2}\big|\frac{3}{}$. All valid partitions of up to 5 objects are presented in Table \ref{tab-bs-enumeration}.
\bgroup
\renewcommand{\arraystretch}{2}
\begin{table}[htbp]
    \centering
        \begin{tabular}{|l|l|p{10.5cm}|}
        \hline
        T(n,k) & Count & Examples \\
        \hline
        T(1,1) & 1 & \texttt{$
            \frac{1}{}
        $}\\
        \hline
        T(2,2) & 1 & \texttt{$
            \frac{1}{2}
            \smallskip
        $}\\
        \hline
        T(3,2) & 2 & \texttt{$
            \frac{1}{2\,3},\
            \frac{1\,3}{2}
        $}\\
        T(3,3) & 1 & \texttt{$
            \frac{1}{2}\big|\frac{3}{}
            \smallskip
        $}\\
        \hline
        T(4,2) & 4 & \texttt{$
            \frac{1\,4}{2\,3},\
            \frac{1\,3\,4}{2},\
            \frac{1}{2\,3\,4},\
            \frac{1\,3}{2\,4}
        $} \\
        T(4,3) & 4 & \texttt{$
            \frac{1\,4}{2}\big|\frac{3}{},\
            \frac{1}{2\,4}\big|\frac{3}{},\
            \frac{1\,3}{2}\big|\frac{4}{},\
            \frac{1}{2\,3}\big|\frac{4}{}
        $}\\
        T(4,4) & 1 & \texttt{$
            \frac{1}{2}\big|\frac{3}{4}
            \smallskip
        $} \\
        \hline
        T(5,2) & 8 & \texttt{$
            \frac{1\,4\,5}{2\,3},\
            \frac{1\,4}{2\,3\,5},\
            \frac{1\,3\,4\,5}{2},\
            \frac{1\,3\,4}{2\,5},\
            \frac{1\,5}{2\,3\,4},\
            \frac{1}{2\,3\,4\,5},\
            \frac{1\,3\,5}{2\,4},\
            \frac{1\,3}{2\,4\,5}
        $}\\
        T(5,3) & 12 & \texttt{$
            \frac{1\,4\,5}{2}\big|\frac{3}{},\
            \frac{1\,4}{2\,5}\big|\frac{3}{\,},\
            \frac{1\,5}{2\,4}\big|\frac{3}{\,},\
            \frac{1}{2\,4\,5}\big|\frac{3}{\,},\
            \frac{1\,3\,5}{2}\big|\frac{4}{\,},\
            \frac{1\,3}{2\,5}\big|\frac{4}{\,},\
            \frac{1\,5}{2\,3}\big|\frac{4}{\,},\
            \frac{1}{2\,3\,5}\big|\frac{4}{\,},\
            \newline\newline
            \smallskip
            \frac{1\,3\,4}{2}\big|\frac{5}{\,},\
            \frac{1}{2\,3\,4}\big|\frac{5}{\,},\
            \frac{1\,3}{2\,4}\big|\frac{5}{\,},\
            \frac{1\,4}{2\,3}\big|\frac{5}{\,}
        $}\\

        T(5,4) & 8 & \texttt{$
            \frac{1\,3}{2} \big| \frac{4}{5},\
            \frac{1\,4}{2} \big| \frac{3}{5},\
            \frac{1\,5}{2} \big| \frac{3}{4},\
            \frac{1}{2\,3} \big| \frac{4}{5},\
            \frac{1}{2\,4} \big| \frac{3}{5},\
            \frac{1}{2\,5} \big| \frac{3}{4},\
            \frac{1}{2} \big| \frac{3\,5}{4},\
            \frac{1}{2} \big| \frac{3}{4\,5}
        $}\\
        T(5,5) & 1 & \texttt{$
            \frac{1}{2}\big|\frac{3}{4}\big|\frac{5}{\,}
            \smallskip
        $}\\
        \hline
        \end{tabular}
        \bigskip
    \caption{Valid Bessel--Stirling constrained partitions of $n$ objects into $k$ compartments of 2-compartment urns.}
    \label{tab-bs-enumeration}
\end{table}
\egroup
\begin{proof}
We proceed by induction on $n$. The base cases are straightforward:
\begin{itemize}
\item $\mStirling{0}{0}_2 = 1$: There is exactly one way to arrange 0 marbles using 0 compartments.
\item $\mStirling{n}{0}_2 = 0$ for $n > 0$: It is impossible to arrange a positive number of marbles using 0 compartments.
\item $\mStirling{n}{k}_2 = 0$ for $k > n$: It is impossible to use more than $n$ compartments when placing $n$ marbles, since each used compartment contains at least one marble.
\item $\mStirling{n}{n}_2 = 1$ for $n > 0$: There is exactly one way to place $n$ marbles into $n$ different compartments (the constraints force the order in which compartments open).
\end{itemize}
For the inductive step, assume that for some $n \geq 1$ and \emph{every} $k \geq 0$ simultaneously, the number $\mStirling{n}{k}_2$ correctly counts the arrangements of $n$ labeled marbles into $k$ compartments following our constraints (the step below uses the hypothesis at two adjacent values of $k$). We need to show that $\mStirling{n+1}{k}_2 = 2\lfloor k/2 \rfloor \mStirling{n}{k}_2 + \mStirling{n}{k-1}_2$ counts the arrangements for $n+1$ marbles in $k$ compartments.
Consider the placement options for the $(n+1)$-st marble:
\begin{itemize}
    \item \textit{Place in a new compartment}: The $(n+1)$-st marble can be placed in a new (previously unused) compartment, making the total number of compartments used equal to $k$. This requires that the previous $n$ marbles were arranged in $k-1$ compartments, and there are $\mStirling{n}{k-1}_2$ such arrangements. By the indistinguishability convention in the theorem statement, opening a new compartment is a \emph{single} option: the constraints determine whether the new compartment lies in the partially filled urn or in a fresh urn, and all fresh urns (and the two compartments of a fresh urn) are interchangeable.
    \item \textit{Place in a previously used compartment}: The $(n+1)$-st marble can be placed in one of the compartments already containing marbles. Here, the key insight comes from the urn model's constraints.
\end{itemize}

We need to count how many valid placement options exist among the $k$ compartments already in use. This depends on the distribution of these compartments across urns:
\begin{itemize}
    \item \textit{When $k$ is even}, say $k = 2j$, the $k$ compartments must be distributed across exactly $j$ fully filled urns (both compartments used in each). This is because our constraints prohibit opening a new urn until all previous urns have at least one marble in each compartment. With $j$ fully filled urns, we have $2j = k$ valid placement options for the $(n+1)$-st marble, since we can place it in any of the already-used compartments.
    \item \textit{When $k$ is odd}, say $k = 2j+1$, we must have $j$ fully filled urns (accounting for $2j$ compartments) plus one urn with exactly one compartment used. According to the constraints, we cannot place another marble in this partially-filled urn's used compartment until its other compartment also contains a marble. Therefore, we have only $2j = 2\lfloor k/2 \rfloor$ valid placement options in previously used compartments.
    \end{itemize}

In either case, the number of valid placement options in previously used compartments is $2\lfloor k/2 \rfloor$. Since there are $\mStirling{n}{k}_2$ ways to arrange the first $n$ marbles in $k$ compartments, we have $2\lfloor k/2 \rfloor \cdot \mStirling{n}{k}_2$ ways to place the $(n+1)$-st marble in a previously used compartment.
Combining both cases, the total number of ways to arrange $n+1$ marbles in $k$ compartments is
\[
\mStirling{n+1}{k}_2 = 2\lfloor k/2 \rfloor \cdot \mStirling{n}{k}_2 + \mStirling{n}{k-1}_2.
\]
This matches exactly the recurrence relation for the Bessel--Stirling numbers, completing the proof. (Two placement processes that produce the same final assignment of marbles to compartments coincide step by step, since the placement history can be recovered by removing marbles $n, n-1, \ldots$ in reverse order; the count is therefore genuinely a count of distinct arrangements.)
\end{proof}

This combinatorial interpretation generalizes verbatim to urns with $m$ compartments, yielding the $m$-Stirling numbers and $m$-Bell sequences (Theorem \ref{thm-general-urn}). In the special case $m=1$, each urn has only one compartment, so we are left with the standard interpretation of the Stirling numbers as counting partitions of labeled objects into unlabeled non-empty sets. The combinatorial model provides intuition for why the row sums shift two places to the left after applying the binomial transform twice: the constraint requiring both compartments of an urn to be filled before opening a new urn creates a ``pairing effect'', which manifests as the shift-by-two property. This loose argument is made precise, in a purely algebraic way, by the shift identity of Theorem \ref{thm-shift-identity}. Finally, an easy way to enumerate the combinatorial objects counted by the Bessel--Stirling numbers of the second kind is via succession rules and a generating tree, as introduced by West \cite{west_generating_1995}. See Figure \ref{fig:generating-tree} and Appendix C.

\begin{figure}[h]  
\centering
\begin{tikzpicture}[scale=0.5, level distance=1.1cm,
level 1/.style={sibling distance=8cm},
level 2/.style={sibling distance=7cm},
level 3/.style={sibling distance=6.8cm},
level 4/.style={sibling distance=2.2cm},
level 5/.style={sibling distance=0.6cm},
every node/.style={draw,circle,minimum size=0.25cm,inner sep=0pt,font=\scriptsize,}]
\node {0}
    child {node {1}
        child {node {2}
            child {node {2}
                child {node {2}
                    child {node {2}}
                    child {node {2}}
                    child {node {3}}
                }
                child {node {2}
                    child {node {2}}
                    child {node {2}}
                    child {node {3}}
                }
                child {node {3}
                    child {node {3}}
                    child {node {3}}
                    child {node {4}}
                }
            }
            child {node {2}
                child {node {2}
                    child {node {2}}
                    child {node {2}}
                    child {node {3}}
                }
                child {node {2}
                    child {node {2}}
                    child {node {2}}
                    child {node {3}}
                }
                child {node {3}
                    child {node {3}}
                    child {node {3}}
                    child {node {4}}
                }
            }
            child {node {3}
                child {node {3}
                    child {node {3}}
                    child {node {3}}
                    child {node {4}}
                }
                child {node {3}
                    child {node {3}}
                    child {node {3}}
                    child {node {4}}
                }
                child {node {4}
                    child {node {4}}
                    child {node {4}}
                    child {node {4}}
                    child {node {4}}
                    child {node {5}}
                }
            }
        }
    };
\end{tikzpicture}
\caption{The first six levels (levels $0$--$5$) of the generating tree for the Bessel--Stirling numbers of the second kind using the succession rule $\Omega$ of Appendix C. The nine level-4 nodes read $(2)^4(3)^4(4)$ and the twenty-nine level-5 nodes read $(2)^8(3)^{12}(4)^8(5)$.}
\label{fig:generating-tree}
\end{figure}

\section{Bessel--Stirling numbers of the first kind and Bessel falling factorials}\label{sec-first-kind}
\noindent The regular Stirling numbers come in a dual pair. In addition to the Stirling numbers of the second kind, which we explored at length, there exists a partner triangular array that bears the name Stirling numbers of the first kind. The two arrays are orthogonal to each other, or inverses of one another \cite{riordan_inverse_1964}:
\[
\sum_{j=k}^n s(n,j)S(j,k)=\sum_{j=k}^n(-1)^{n-j} \Stirlingone{n}{j}\Stirling{j}{k} = \delta_{n,k},
\]
where $s(n,k) = (-1)^{n-k}\Stirlingone{n}{k}$ denotes the \textit{signed} Stirling numbers of the first kind, $\Stirlingone{n}{k}$ the \textit{unsigned} Stirling numbers of the first kind, and $\delta_{n,k}$ is the Kronecker delta. When represented as lower-triangular matrices, with $S$ the matrix of Stirling numbers of the second kind and $s$ the matrix of signed Stirling numbers of the first kind, we have $sS = Ss = I$. The recurrence relation for the unsigned Stirling numbers of the first kind has a factor of $n$ rather than $k$:
\[
\Stirlingone{n+1}{k} = n\Stirlingone{n}{k} + \Stirlingone{n}{k-1}.
\]

The Bessel--Stirling numbers also have a first-kind partner, defined by the analogous recurrence.
\begin{definition}[Unsigned Bessel--Stirling numbers of the first kind]\label{def-bs1}
Let $\mStirlingone{n}{k}_2$ be defined for $n, k \geq 0$ by $\mStirlingone{0}{k}_2 = \delta_{0,k}$, the convention $\mStirlingone{n}{k}_2 = 0$ for $k<0$, and
    \[
    \mStirlingone{n+1}{k}_2 = 2\floor{n/2}\mStirlingone{n}{k}_2 + \mStirlingone{n}{k-1}_2.
    \]
\end{definition}

The first few rows of the unsigned Bessel--Stirling numbers of the first kind are shown in Table \ref{tab-bessel-stirling1}. This recurrence closely mirrors that of the standard Stirling numbers of the first kind, but with the factor $n$ replaced by $2\floor{n/2}$, reflecting the ``even-step'' structure inherent in the Bessel framework. Just as the ordinary Stirling numbers of the first kind arise in the context of a change of basis from ordinary powers to falling powers, these generalized coefficients appear as the change-of-basis matrix between ordinary powers and a novel class of falling factorials.

\begin{table}[htbp]
    \centering
    \begin{tabular}{|l|*{10}{c}|c|}
        \hline
          & k=0 & k=1 & k=2   & k=3   & k=4   & k=5   & k=6   & k=7   & k=8   & k=9   & $\sum_{k=0}^{n}$ \\
        \hline
        n=0 & 1   &     &       &       &       &       &       &       &       &       & 1                  \\
        n=1 & 0   & 1   &       &       &       &       &       &       &       &       & 1                  \\
        n=2 & 0   & 0   & 1     &       &       &       &       &       &       &       & 1                  \\
        n=3 & 0   & 0   & 2     & 1     &       &       &       &       &       &       & 3                  \\
        n=4 & 0   & 0   & 4     & 4     & 1     &       &       &       &       &       & 9                  \\
        n=5 & 0   & 0   & 16    & 20    & 8     & 1     &       &       &       &       & 45                 \\
        n=6 & 0   & 0   & 64    & 96    & 52    & 12    & 1     &       &       &       & 225                \\
        n=7 & 0   & 0   & 384   & 640   & 408   & 124   & 18    & 1     &       &       & 1575               \\
        n=8 & 0   & 0   & 2304  & 4224  & 3088  & 1152  & 232   & 24    & 1     &       & 11025              \\
        n=9 & 0   & 0   & 18432 & 36096 & 28928 & 12304 & 3008  & 424   & 32    & 1     & 99225              \\
        \hline
    \end{tabular}
    \bigskip
    \caption{Unsigned Bessel--Stirling numbers of the first kind, $\mStirlingone{n}{k}_2$. Row sums give \href{https://oeis.org/A000246}{A000246}.}
    \label{tab-bessel-stirling1}
\end{table}

\begin{table}[htbp]
    \centering
    \small
    \begin{tabular}{|l|*{10}{c}|c|}
        \hline
          & k=0 & k=1 & k=2   & k=3   & k=4   & k=5   & k=6   & k=7   & k=8   & k=9   & $\sum_{k=0}^{n}$ \\
        \hline
        n=0 & 1   &     &       &       &       &       &       &       &       &       & 1                  \\
        n=1 & 0   & 1   &       &       &       &       &       &       &       &       & 1                  \\
        n=2 & 0   & 0   & 1     &       &       &       &       &       &       &       & 1                  \\
        n=3 & 0   & 0   & 4     & 1     &       &       &       &       &       &       & 5                  \\
        n=4 & 0   & 0   & 16    & 8     & 1     &       &       &       &       &       & 25                 \\
        n=5 & 0   & 0   & 96    & 64    & 16    & 1     &       &       &       &       & 177                \\
        n=6 & 0   & 0   & 576   & 480   & 192   & 24    & 1     &       &       &       & 1273               \\
        n=7 & 0   & 0   & 4608  & 4416  & 2400  & 432   & 36    & 1     &       &       & 11893              \\
        n=8 & 0   & 0   & 36864 & 39936 & 28416 & 6720  & 864   & 48    & 1     &       & 112849             \\
        n=9 & 0   & 0   & 368640& 436224& 380928& 109056& 18816 & 1536  & 64    & 1     & 1315265            \\
        \hline
    \end{tabular}
    \bigskip
    \caption{Bessel--Lah numbers (Bessel--Stirling numbers of the third kind), $\mLah{n}{k}_2$.}
    \label{tab-bessel-stirling3}
\end{table}

\subsection{Bessel falling factorials}

The signed Stirling numbers of the first kind are well known to be the coefficients in the expansion of the falling factorial:
\[
x^{\underline{n}} =\prod_{k=0}^{n-1}(x - k)= x(x-1)(x-2)\cdots(x-n+1) = \sum_{k=0}^n (-1)^{n-k} \Stirlingone{n}{k} x^k.
\]

In the Bessel case, we discover a strikingly analogous structure.
\begin{definition}[Bessel falling and rising factorials]\label{def-bessel-factorials}
Define the Bessel falling and rising factorials (or even-step falling and rising factorials) as
\[
\begin{aligned}
    x^{\uuline{n}} &= \prod_{k=0}^{n-1}\left(x-2\floor{k/2}\right) = x\cdot x\cdot (x-2)(x-2)(x-4)(x-4)\cdots \\
    x^{\ooline{n}} &= \prod_{k=0}^{n-1}\left(x+2\floor{k/2}\right) = x\cdot x\cdot (x+2)(x+2)(x+4)(x+4)\cdots
\end{aligned}
\]
where each product has exactly $n$ factors, so each factor is repeated twice except possibly the last: e.g.\ $x^{\uuline{4}} = x^2(x-2)^2$ but $x^{\uuline{5}} = x^2(x-2)^2(x-4)$.
\end{definition}
\begin{theorem}\label{thm-bessel-factorial-expansion}
The Bessel--Stirling numbers of the first kind are the coefficients in the expansions of the Bessel falling and rising factorials:
\[
\begin{aligned}
    x^{\uuline{n}} &= \sum_{k=0}^n (-1)^{n-k} \mStirlingone{n}{k}_2\, x^k\\
    x^{\ooline{n}} &= \sum_{k=0}^n \mStirlingone{n}{k}_2\, x^k.
\end{aligned}
\]
\end{theorem}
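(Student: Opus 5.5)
The proof goes by induction on $n$, treating the rising factorial first and then deducing the falling case by the substitution $x \mapsto -x$. Write $x^{\ooline{n}} = \sum_{k} c_{n,k} x^k$. By Definition \ref{def-bessel-factorials}, the product for $x^{\ooline{n+1}}$ is the product for $x^{\ooline{n}}$ with one more factor appended, namely the factor indexed by $k=n$. This gives the one-step relation
\[
x^{\ooline{n+1}} = \bigl(x + 2\floor{n/2}\bigr)\, x^{\ooline{n}} .
\]
The base case is $x^{\ooline{0}} = 1$ (the empty product), so $c_{0,k} = \delta_{0,k}$. This matches the initial condition $\mStirlingone{0}{k}_2 = \delta_{0,k}$ of Definition \ref{def-bs1}.

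For the inductive step, expand the right-hand side and compare the coefficients of $x^k$. Multiplying by $x$ shifts the coefficients up by one degree, and multiplying by the constant $2\floor{n/2}$ scales them. Hence
\[
c_{n+1,k} = 2\floor{n/2}\, c_{n,k} + c_{n,k-1},
\]
with $c_{n,-1} = 0$. This is exactly the recurrence of Definition \ref{def-bs1}. Since the two arrays satisfy the same recurrence with the same initial row, $c_{n,k} = \mStirlingone{n}{k}_2$ for all $n,k$, which proves the second identity.

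For the falling factorial, the key observation is
\[
(-x)^{\ooline{n}} = \prod_{k=0}^{n-1}\bigl(-x + 2\floor{k/2}\bigr) = (-1)^n \prod_{k=0}^{n-1}\bigl(x - 2\floor{k/2}\bigr) = (-1)^n x^{\uuline{n}} .
\]
Substituting $-x$ into the rising expansion and multiplying by $(-1)^n$ gives
\[
x^{\uuline{n}} = \sum_k (-1)^{n+k} \mStirlingone{n}{k}_2\, x^k = \sum_k (-1)^{n-k} \mStirlingone{n}{k}_2\, x^k ,
\]
which is the first identity. Alternatively, one can run the same induction directly, using the signed recurrence $s_{n+1,k} = -2\floor{n/2}\, s_{n,k} + s_{n,k-1}$.

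There is no real obstacle in this argument. The only point that needs care is the indexing: the new factor appended at step $n \to n+1$ must be the one with index $k=n$, so that its constant is $2\floor{n/2}$. This is a function of the row index $n$ rather than the column index, exactly as in Definition \ref{def-bs1}.
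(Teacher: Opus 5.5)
Your proof is correct and follows the paper's argument: induction on the rising factorial, where the appended factor $(x+2\floor{n/2})$ reproduces the recurrence of Definition \ref{def-bs1}. The falling case then follows, as in the paper, by substituting $x\mapsto -x$ via $x^{\uuline{n}} = (-1)^n(-x)^{\ooline{n}}$.
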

\begin{proof}
Induction on $n$, rising version first. Multiplying $x^{\ooline{n}} = \sum_k \mStirlingone{n}{k}_2 x^k$ by the next factor $(x + 2\floor{n/2})$ sends the coefficient of $x^k$ to $\mStirlingone{n}{k-1}_2 + 2\floor{n/2}\mStirlingone{n}{k}_2$, which is exactly the recurrence of Definition \ref{def-bs1}. The falling version follows by substituting $x \mapsto -x$, since $x^{\uuline{n}} = (-1)^n\, (-x)^{\ooline{n}}$.
\end{proof}
Here are the expansions of the first few Bessel rising factorials:
\[
\begin{aligned}
     x^{\ooline{1}} = x &= x \\
     x^{\ooline{2}} = x^2 &= x^2 \\
     x^{\ooline{3}} = x^2(x+2) &= 2x^2+x^3 \\
     x^{\ooline{4}} = x^2(x+2)^2 &= 4x^2+4x^3+x^4 \\
     x^{\ooline{5}} = x^2(x+2)^2(x+4) &= 16x^2+20x^3+8x^4+x^5 \\
     x^{\ooline{6}} = x^2(x+2)^2(x+4)^2 &= 64x^2+96x^3+52x^4+12x^5 +x^6.
\end{aligned}
\]
Just as the regular falling powers form an alternative polynomial basis, which can be converted to and from the monomial basis using the Stirling numbers, the same is true for the Bessel falling powers. One recovers a monomial from them using the Bessel--Stirling numbers of the second kind:

\begin{theorem}\label{thm-inverse-expansion}
\[
x^n =\sum_{k=0}^n \mStirling{n}{k}_2\; x^{\uuline{k}}.
\]
\end{theorem}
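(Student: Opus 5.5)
The plan is induction on $n$, in the same way the classical identity $x^n=\sum_k \Stirling{n}{k}x^{\underline{k}}$ is proved, using only the recurrence \eqref{eq-bs-recurrence} and a one-step relation between consecutive Bessel falling factorials.

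The key observation comes straight from Definition \ref{def-bessel-factorials}. The product defining $x^{\uuline{k+1}}$ is $x^{\uuline{k}}$ times the factor indexed by $k$, so
\[
x^{\uuline{k+1}} = x^{\uuline{k}}\,\bigl(x-2\floor{k/2}\bigr),
\qquad\text{equivalently}\qquad
x\cdot x^{\uuline{k}} = x^{\uuline{k+1}} + 2\floor{k/2}\,x^{\uuline{k}} .
\]
The coefficient $2\floor{k/2}$ here is indexed by the lower index $k$, exactly as in the second-kind recurrence. This is why the second-kind numbers, and not the first-kind ones (whose coefficient is $2\floor{n/2}$), appear in this direction of the change of basis.

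For the base case $n=0$, both sides equal $1$, since $\mStirling{0}{k}_2=\delta_{0,k}$ and $x^{\uuline{0}}=1$ is the empty product. For the inductive step, assume $x^n=\sum_k \mStirling{n}{k}_2 x^{\uuline{k}}$ and multiply by $x$. Applying the one-step relation to each term gives
\[
x^{n+1}=\sum_k \mStirling{n}{k}_2\,x^{\uuline{k+1}}+\sum_k 2\floor{k/2}\mStirling{n}{k}_2\,x^{\uuline{k}} .
\]
Shift the index in the first sum and collect the coefficient of $x^{\uuline{k}}$. That coefficient is $\mStirling{n}{k-1}_2+2\floor{k/2}\mStirling{n}{k}_2$, which equals $\mStirling{n+1}{k}_2$ by \eqref{eq-bs-recurrence}. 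This completes the induction.

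No step is genuinely hard. The only points to check are the boundary terms. The $k=0$ term contributes nothing new, because $2\floor{0/2}=0$ and $\mStirling{n}{-1}_2=0$. The upper limit of the sum is unaffected, because $\mStirling{n}{k}_2=0$ for $k>n$.

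As a cross-check, or as an alternative proof, one can note that Theorem \ref{thm-bessel-factorial-expansion} shows the first-kind array converts the Bessel falling factorials to monomials. The present statement then says that the signed first-kind matrix and the second-kind matrix are mutually inverse. However, proving that inversion would itself require an argument like the one above, so the direct induction is the route I would write out.
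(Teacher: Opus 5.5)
Your proof is correct and is essentially the paper's own argument: induction on $n$ using the one-step relation $x\cdot x^{\uuline{k}} = x^{\uuline{k+1}} + 2\floor{k/2}\,x^{\uuline{k}}$, then collecting the coefficient of $x^{\uuline{k}}$ and matching it against the recurrence \eqref{eq-bs-recurrence}. The paper also notes that $\{x^{\uuline{k}}\}$ is a basis, so the coefficients are unique; the identity itself does not need this, but the later orthogonality statement relies on it.
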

\begin{proof}
From Definition \ref{def-bessel-factorials}, $x^{\uuline{k+1}} = x^{\uuline{k}}(x - 2\floor{k/2})$, so
\begin{equation}\label{eq-x-action-falling}
x \cdot x^{\uuline{k}} = x^{\uuline{k+1}} + 2\floor{k/2}\, x^{\uuline{k}}.
\end{equation}
Now induct on $n$: multiplying $x^n = \sum_k \mStirling{n}{k}_2 x^{\uuline{k}}$ by $x$ and applying \eqref{eq-x-action-falling} term by term shows that the coefficient of $x^{\uuline{k}}$ in $x^{n+1}$ is $\mStirling{n}{k-1}_2 + 2\floor{k/2}\mStirling{n}{k}_2 = \mStirling{n+1}{k}_2$, by \eqref{eq-bs-recurrence}. Since $\{x^{\uuline{k}}\}_{k \ge 0}$ has one polynomial of each degree, it is a basis and the coefficients are unique.
\end{proof}

Comparing \eqref{eq-x-action-falling} with the carrier computation in the proof of Theorem \ref{thm-carrier-expansion} explains the coincidence of coefficients: multiplication by $x$ acts on the falling-factorial basis exactly as $xD$ acts on the carrier module $\{x^k f_{k \bmod 2}\}$---both are ``weighted shift'' operators with weight sequence $2\floor{k/2}$.

\begin{proposition}[Orthogonality]\label{prop-orthogonality}
    \[
        \sum_{j=k}^n (-1)^{n-j}\mStirlingone{n}{j}_2\mStirling{j}{k}_2=\delta_{n,k},
        \qquad
        \sum_{j=k}^n (-1)^{j-k}\mStirling{n}{j}_2\mStirlingone{j}{k}_2=\delta_{n,k}.
    \]
\end{proposition}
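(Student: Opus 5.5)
The plan is to derive both identities from the two change-of-basis results already established: Theorem \ref{thm-bessel-factorial-expansion}, which expands the Bessel falling factorial in monomials, and Theorem \ref{thm-inverse-expansion}, which expands monomials in Bessel falling factorials. Both families are polynomial bases with one polynomial of each degree. Define lower-triangular matrices $s = \big((-1)^{n-k}\mStirlingone{n}{k}_2\big)_{n,k}$ and $S = \big(\mStirling{n}{k}_2\big)_{n,k}$. In this notation the two theorems say that $s$ converts the monomial basis into the falling-factorial basis, and $S$ converts it back.

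For the first identity, start from $x^{\uuline{n}} = \sum_j (-1)^{n-j}\mStirlingone{n}{j}_2\, x^j$. Substitute $x^j = \sum_k \mStirling{j}{k}_2\, x^{\uuline{k}}$ into each term. This gives
\[
x^{\uuline{n}} = \sum_k \Big(\sum_j (-1)^{n-j}\mStirlingone{n}{j}_2\mStirling{j}{k}_2\Big) x^{\uuline{k}}.
\]
Since $\{x^{\uuline{k}}\}_{k\ge0}$ is a basis (the uniqueness remark in the proof of Theorem \ref{thm-inverse-expansion}), the inner sum must equal $\delta_{n,k}$.

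The summation range $k\le j\le n$ follows from triangularity. We have $\mStirling{j}{k}_2=0$ for $k>j$, as noted after the definition. We also have $\mStirlingone{n}{j}_2=0$ for $j>n$, which holds by the same one-line induction on the recurrence of Definition \ref{def-bs1}.

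For the second identity, run the substitution in the opposite order. Start from $x^n = \sum_j \mStirling{n}{j}_2 x^{\uuline{j}}$, expand each $x^{\uuline{j}}$ in monomials, and compare coefficients of $x^k$ using uniqueness of monomial expansions. This gives $\sum_j \mStirling{n}{j}_2 (-1)^{j-k}\mStirlingone{j}{k}_2 = \delta_{n,k}$, exactly the stated sign pattern.

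Alternatively, in matrix language the first identity says $sS=I$. Because these are lower-unitriangular matrices (the diagonals are $1$), a one-sided inverse is two-sided, so $Ss=I$ as well. Either route works.

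There is no real obstacle. The only points needing care are that the sign bookkeeping matches: $(-1)^{n-j}$ in the first sum comes from the expansion of $x^{\uuline{n}}$, and $(-1)^{j-k}$ in the second from the expansion of $x^{\uuline{j}}$. The other point is stating the triangularity and unit-diagonal facts for the first-kind array explicitly, since the paper has only stated them for the second kind.
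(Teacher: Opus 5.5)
Your proof is correct and is essentially the paper's argument: the paper also composes the two change-of-basis expansions of Theorems~\ref{thm-bessel-factorial-expansion} and~\ref{thm-inverse-expansion} and notes that the composition, in either order, is the identity. You additionally write out the substitutions and coefficient comparisons, and you state the triangularity and unit-diagonal facts for the first-kind array that the paper leaves implicit.
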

\begin{proof}
By Theorem \ref{thm-bessel-factorial-expansion}, the matrix $\big((-1)^{n-k}\mStirlingone{n}{k}_2\big)_{n,k}$ converts the monomial basis into the Bessel-falling basis, and by Theorem \ref{thm-inverse-expansion} the matrix $\big(\mStirling{n}{k}_2\big)_{n,k}$ converts it back. The composition of the two changes of basis, in either order, is the identity.
\end{proof}

Thus the two Bessel--Stirling triangles form a pair of lower-triangular inverse matrices (after attaching signs to one of them)---one converting monomials to generalized factorials, the other reversing the transformation, in complete parallel with the classical Stirling duality.

\subsection{Symmetric function expressions and file placements}

Both triangles are specializations of the classical bases of symmetric functions, evaluated on the ``doubled'' weight sequence.

\begin{proposition}\label{prop-symmetric-functions}
Let $w_j = 2\floor{j/2}$, and let $e_d$ and $h_d$ denote the elementary and complete homogeneous symmetric polynomials of degree $d$. Then for $0 \le k \le n$:
\[
\mStirlingone{n}{k}_2 = e_{n-k}\big(w_0, w_1, \ldots, w_{n-1}\big), \qquad
\mStirling{n}{k}_2 = h_{n-k}\big(w_1, w_2, \ldots, w_{k}\big).
\]
\end{proposition}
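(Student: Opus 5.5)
Both identities follow from generating functions already in hand, so the plan is to read off coefficients rather than argue combinatorially.

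\emph{First kind.} By Theorem \ref{thm-bessel-factorial-expansion}, the Bessel rising factorial can be written as
\[
x^{\ooline{n}}=\prod_{j=0}^{n-1}(x+w_j)=\sum_{k=0}^n \mStirlingone{n}{k}_2 x^k .
\]
Expanding the product by Vieta's formulas, the coefficient of $x^k$ is obtained by choosing the constant $w_j$ from exactly $n-k$ of the $n$ factors. That coefficient is therefore $e_{n-k}(w_0,\dots,w_{n-1})$. The zero weights $w_0=w_1=0$ are harmless, since they just contribute vanishing terms. Alternatively one can induct directly: the recurrence $e_d(w_0,\dots,w_n)=e_d(w_0,\dots,w_{n-1})+w_n e_{d-1}(w_0,\dots,w_{n-1})$ is exactly Definition \ref{def-bs1} after reindexing $d=n+1-k$.

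\emph{Second kind.} Here I use the column generating function \eqref{eq-column-ogf}:
\[
A_k(x)=\frac{x^k}{\prod_{j=1}^k(1-w_jx)} .
\]
Expanding each factor as a geometric series $\sum_{a\ge0}w_j^a x^a$ and multiplying out, the coefficient of $x^{n-k}$ in $\prod_j(1-w_jx)^{-1}$ is the sum of all monomials of total degree $n-k$ in $w_1,\dots,w_k$. That sum is $h_{n-k}(w_1,\dots,w_k)$.

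To keep the argument self-contained and avoid relying on the appendix derivation of \eqref{eq-column-ogf}, I would instead check the recurrence directly. The complete homogeneous polynomials satisfy the standard splitting
\[
h_d(w_1,\dots,w_k)=h_d(w_1,\dots,w_{k-1})+w_k\,h_{d-1}(w_1,\dots,w_k),
\]
according to whether the variable $w_k$ is used. Setting $d=n+1-k$, this becomes $\mStirling{n}{k-1}_2+2\floor{k/2}\mStirling{n}{k}_2=\mStirling{n+1}{k}_2$, which is \eqref{eq-bs-recurrence}. The base cases are $h_0=1$, giving the diagonal $\mStirling{n}{n}_2=1$, and $h_d(\,)=0$ for $d>0$, matching $\mStirling{n}{0}_2=\delta_{n,0}$.

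The only point needing care is the index bookkeeping at the boundary. The range $0\le k\le n$ must match the conventions $e_d=0$ for $d$ greater than the number of variables and $h_d$ of the empty set, and $k=0$ must be handled correctly (for example $h_n(\,)=\delta_{n,0}$). Beyond that the argument is routine.
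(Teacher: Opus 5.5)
Your proposal is correct and follows the paper's proof: the first identity is read off as the coefficient of $x^k$ in $x^{\ooline{n}}=\prod_{j=0}^{n-1}(x+w_j)$, and the second as $[x^{n-k}]$ of $\prod_{j=1}^{k}(1-w_jx)^{-1}$ from the column generating function \eqref{eq-column-ogf}. Your alternative checks via the splitting recurrences for $e_d$ and $h_d$ are also valid, with the index bookkeeping handled correctly, and they simply re-derive the same facts directly from the defining recurrences.
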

\begin{proof}
The first identity reads off the coefficient of $x^k$ in $x^{\ooline{n}} = \prod_{j=0}^{n-1}(x + w_j)$ (Theorem \ref{thm-bessel-factorial-expansion}). The second reads off the coefficient of $x^{n}$ in the column generating function \eqref{eq-column-ogf}: expanding each geometric factor $\left(1-w_jx\right)^{-1} = \sum_i w_j^i x^i$ and extracting $[x^{n-k}]$ gives precisely the sum of all monomials $w_{j_1}w_{j_2}\cdots w_{j_{n-k}}$ with $1 \le j_1 \le \cdots \le j_{n-k} \le k$, which is $h_{n-k}(w_1, \ldots, w_k)$.
\end{proof}

The classical Stirling numbers are the same specializations with weights $(0, 1, 2, \ldots)$: $\Stirlingone{n}{k} = e_{n-k}(0,1,\ldots,n-1)$ and $\Stirling{n}{k} = h_{n-k}(1, 2, \ldots, k)$ \cite{comtet1974}. In rook-theoretic language, $e_{n-k}(w_0,\ldots,w_{n-1})$ counts \emph{file placements} of $n-k$ rooks on the ``doubled staircase'' board whose $j$-th column has height $w_j = 2\floor{j/2}$ (rooks in distinct columns, one cell each, with no constraint along rows), giving the Bessel--Stirling numbers of the first kind a direct board-counting interpretation. Placements of this kind fit the $q$-rook framework of Garsia and Remmel \cite{garsiaRemmel1986}; for the classical theory of non-attacking rook placements on Ferrers boards see Goldman, Joichi and White \cite{goldman_rook_1975}.

\subsection{A combinatorial interpretation of the first kind: parity-restricted insertions}

The unsigned Stirling numbers of the first kind $\Stirlingone{n}{k}$ count permutations of $[n] = \{1, \ldots, n\}$ with exactly $k$ cycles. The standard proof builds a permutation by inserting the elements $1, 2, \ldots, n$ one at a time: element $i+1$ either opens a new cycle, or is inserted immediately after any of the $i$ existing elements in its cycle. The Bessel--Stirling numbers of the first kind count the permutations produced when the insertion process is parity-restricted.

\begin{theorem}[Restricted-insertion model]\label{thm-insertion-model}
Build permutations of $[n]$ by inserting elements $1, 2, \ldots, n$ in order, where element $i+1$ either
\begin{itemize}
    \renewcommand\labelitemi{--}
    \item opens a new cycle as a fixed point, or
    \item is inserted immediately after one of the existing elements $1, \ldots, i$ in cycle notation, \emph{except} that when $i$ is odd, insertion immediately after the largest element $i$ is forbidden.
\end{itemize}
Distinct insertion histories produce distinct permutations, and $\mStirlingone{n}{k}_2$ counts the resulting permutations with exactly $k$ cycles. Equivalently, $\mStirlingone{n}{k}_2$ is the number of permutations $\pi$ of $[n]$ with $k$ cycles such that for every even $e \leq n$, the restriction of $\pi$ to $\{1, \ldots, e\}$ (obtained by deleting the elements larger than $e$ from the cycle notation) does not map $e - 1$ to $e$.
\end{theorem}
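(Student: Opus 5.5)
The plan has three parts: count insertion options, prove injectivity of histories, and check the equivalent characterization.

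\textbf{Counting.} When element $i+1$ is added to a permutation of $[i]$, the number of choices that keep the cycle count fixed equals the number of allowed insertion slots. There are $i$ slots, one immediately after each existing element. When $i$ is odd one slot is forbidden, leaving $i-1$. In both cases the count is $2\floor{i/2}$. Opening a fixed point raises the cycle count by exactly one. Let $c(n,k)$ be the number of insertion histories of length $n$ ending with $k$ cycles. Then $c(0,k)=\delta_{0,k}$ and
\[
c(n+1,k)=2\floor{n/2}\,c(n,k)+c(n,k-1),
\]
which is the recurrence of Definition \ref{def-bs1}. Induction on $n$ gives $c(n,k)=\mStirlingone{n}{k}_2$.

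\textbf{Injectivity.} Distinct histories must give distinct permutations. I would recover the history from the final permutation $\pi$ by deleting elements in the order $n,n-1,\ldots$. Deleting the largest element $j$ from the cycle notation of the current permutation of $[j]$ recovers the permutation of $[j-1]$ at the previous stage. Whether $j$ opened a fixed point, and otherwise which element it was inserted after, is read off from $\pi$ itself: it is the element $\pi^{-1}(j)$ in the restricted permutation. Hence the map from histories to permutations is injective. The same argument shows the classical unrestricted histories are in bijection with all permutations, which gives a clean frame for the next step.

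\textbf{Characterization.} Consider any permutation built by unrestricted insertion. A forbidden move happens at step $e=i+1$ with $i$ odd, so $e$ is even, and it inserts $e$ right after $e-1$. This happens exactly when the restriction of the final permutation to $\{1,\ldots,e\}$ maps $e-1$ to $e$. This uses two facts: restriction to $[e]$ undoes the later insertions, and inserting $e$ after $x$ means the restriction maps $x\mapsto e$. Therefore the restricted histories correspond exactly to the permutations avoiding the condition at every even $e\le n$.

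\textbf{Main obstacle.} The main care point is the claim that restriction (deleting larger elements from cycle notation) is inverse to insertion. This requires checking that deletion is well-defined on cycles, by splicing $\pi^{-1}(j)\mapsto\pi(j)$, and that later insertions never alter the relation $x\mapsto e$ within $[e]$.
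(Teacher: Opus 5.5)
Your proposal is correct and follows essentially the same route as the paper's proof. Both count $2\floor{i/2}$ permitted slots plus one new-cycle move to obtain the recurrence of Definition~\ref{def-bs1}, both recover the history by successively splicing out the largest element, and both identify a forbidden move at an even step $e$ with the restriction to $\{1,\ldots,e\}$ mapping $e-1$ to $e$. Your explicit observation that restriction to $[e]$ undoes all later insertions, and the detour through the classical unrestricted bijection, spell out steps the paper compresses into a single sentence.
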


\begin{proof}
At the step inserting element $i+1$ there is $1$ way to open a new cycle and $i - (i \bmod 2) = 2\floor{i/2}$ permitted insertion positions, so the number of histories with $k$ ``new cycle'' steps satisfies the recurrence of Definition \ref{def-bs1} with the cycle count increasing exactly on new-cycle steps. Histories biject with outcomes because the history is recoverable from the final permutation: deleting the largest element (splicing it out of its cycle) undoes the last step, and iterating recovers every step. Finally, the insertion of $i+1$ immediately after $i$ is precisely what makes the restriction of the final permutation to $\{1,\ldots,i+1\}$ map $i$ to $i+1$; with $e = i + 1$ ranging over even values, the forbidden histories correspond exactly to the stated condition on $\pi$.
\end{proof}

For general $m$ the same model applies with ``$i$ odd'' replaced by the rule that insertion is only permitted after the $m\floor{i/m}$ smallest elements (equivalently, insertion after the largest $i \bmod m$ elements is forbidden); the count of reachable permutations of $[n]$ with $k$ cycles is then $\mStirlingone{n}{k}_m$ (Section \ref{sec-general-case}).

\subsection{Row sums of the first kind: permutations of odd order}

Although the first-kind triangle was, prior to this work, also not present in the OEIS, its row sums yield a sequence with a rich combinatorial history: \href{https://oeis.org/A000246}{A000246}, the number of permutations in the symmetric group $S_n$ that have odd order, i.e.\ all of whose cycle lengths are odd: $1, 1, 1, 3, 9, 45, 225, 1575, 11025, 99225, \ldots$

\begin{proposition}\label{prop-a000246}
\[
\sum_{k=0}^{n}\mStirlingone{n}{k}_2 = \prod_{j=0}^{n-1}\left(2\floor{j/2}+1\right) = \#\{\pi \in S_n : \pi \text{ has odd order}\}.
\]
\end{proposition}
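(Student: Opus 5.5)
The first equality follows by setting $x=1$ in the rising-factorial expansion of Theorem \ref{thm-bessel-factorial-expansion}. This gives
\[
\sum_{k=0}^n \mStirlingone{n}{k}_2 = 1^{\ooline{n}} = \prod_{j=0}^{n-1}\bigl(1+2\floor{j/2}\bigr).
\]
Equivalently, summing the recurrence of Definition \ref{def-bs1} over $k$ shows that the row sums $R_n$ satisfy $R_{n+1}=(2\floor{n/2}+1)R_n$ with $R_0=1$. The product is therefore $1\cdot1\cdot3\cdot3\cdot5\cdot5\cdots$, with exactly $n$ factors.

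For the second equality, let $O_n$ be the number of permutations of $[n]$ all of whose cycles have odd length. Since the order of a permutation is the lcm of its cycle lengths, odd order is the same as all cycle lengths being odd. I will show that $O_n$ satisfies the same recurrence $O_{n+1}=(2\floor{n/2}+1)O_n$ with $O_0=1$. Then $O_n=R_n$ follows by induction.

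The cleanest route is through the exponential generating function. Odd cycles have e.g.f.
\[
\sum_{\ell\ \mathrm{odd}}\frac{x^\ell}{\ell}=\tfrac12\log\frac{1+x}{1-x},
\]
so by the exponential formula
\[
\sum_n O_n\frac{x^n}{n!}=\sqrt{\frac{1+x}{1-x}}=\frac{1+x}{\sqrt{1-x^2}}.
\]
Write $(1-x^2)^{-1/2}=\sum_j \binom{2j}{j}4^{-j}x^{2j}$, and note $\binom{2j}{j}4^{-j}(2j)!=((2j-1)!!)^2$. This gives $O_{2j}=((2j-1)!!)^2$, and the factor $(1+x)$ gives $O_{2j+1}=(2j+1)\,O_{2j}=(2j+1)((2j-1)!!)^2$. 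These closed forms match the product $\prod_{j=0}^{n-1}(2\floor{j/2}+1)$ directly, separately for even and odd $n$: for $n=2J$ the factors pair up as $\prod_{i<J}(2i+1)^2$, and for $n=2J+1$ there is one extra factor $2J+1$.

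The only delicate point is bookkeeping: the identity $\binom{2j}{j}(2j)!/4^j=((2j-1)!!)^2$ and the indexing of the paired product. Both are routine. As a bijective alternative, I could cite the classical derivation of the recurrence $O_{n+1}=O_n$ ($n$ even) and $O_{n+1}=(n+1)O_n$ ($n$ odd), i.e. $O_{2J+1}=O_{2J}$ and $O_{2J+2}=(2J+1)O_{2J+1}$. However, this matches the product only after a shift. The product has factor $1$ at $j=0,1$ and factor $3$ at $j=2,3$, so it equals $1,1,1,3,9,45,\ldots$ and is consistent with the closed forms above. I would therefore rely on the e.g.f. computation, which is self-contained.
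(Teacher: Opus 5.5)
Your proof is correct and follows the paper's route in its essentials. The first equality is handled as in the paper: the paper sums the recurrence over $k$, and your evaluation of Theorem~\ref{thm-bessel-factorial-expansion} at $x=1$ is an equally valid one-liner. For the second equality you share the paper's key step, the exponential formula giving $\sqrt{(1+x)/(1-x)}$.

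The routes part only in how the e.g.f.\ is compared with the product. The paper differentiates to get $(1-x^2)A'=A$ and extracts the three-term recurrence $a_{n+1}=a_n+n(n-1)a_{n-1}$ ($n\ge 1$, $a_0=a_1=1$). It then checks that the product satisfies the same recurrence, via $2\floor{n/2}\bigl(2\floor{(n-1)/2}+1\bigr)=n(n-1)$. You instead factor the e.g.f.\ as $(1+x)(1-x^2)^{-1/2}$ and read off $O_{2J}=((2J-1)!!)^2$ and $O_{2J+1}=(2J+1)((2J-1)!!)^2$, which match the paired product directly. Your version costs a little double-factorial bookkeeping but gives explicit closed forms as a by-product. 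The paper's version avoids that bookkeeping and stays in the \emph{same recurrence, same initial values} style it uses throughout.

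One correction: drop or fix the closing \emph{bijective alternative} paragraph, because the recurrence you attribute to the classical derivation is false. $O_{2J+1}=O_{2J}$ already fails at $J=1$, since $O_3=3$ while $O_2=1$, and your own closed forms give $O_{2J+1}=(2J+1)O_{2J}$. The correct recurrence is $O_{n+1}=(2\floor{n/2}+1)\,O_n$, which is identical to the row-sum recurrence with no shift. That is exactly what you announced you would prove, so the apparent shift mismatch comes from the misquoted recurrence, not from any real obstruction.
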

\begin{proof}
Summing the recurrence of Definition \ref{def-bs1} over $k$ shows that the row sums $s_n$ satisfy $s_{n+1} = (2\floor{n/2}+1)s_n$, $s_0 = 1$, which telescopes to the product. For the second equality, let $a_n$ denote the number of permutations of $[n]$ all of whose cycles have odd length. By the exponential formula, $\sum_n a_n x^n/n! = \exp\big(\sum_{d \text{ odd}} x^d/d\big) = \exp\big(\tfrac{1}{2}\log\tfrac{1+x}{1-x}\big) = \sqrt{(1+x)/(1-x)} =: A(x)$. Differentiating, $(1-x^2)A'(x) = A(x)$, so extracting coefficients gives $a_{n+1} = a_n + n(n-1)a_{n-1}$ for $n \geq 1$, with $a_0 = a_1 = 1$. The product sequence $s_n$ satisfies the same recurrence for $n \geq 1$: $s_{n+1} - s_n = 2\floor{n/2}\,s_n = 2\floor{n/2}\big(2\floor{(n-1)/2}+1\big)s_{n-1} = n(n-1)s_{n-1}$, where the last equality holds for $n$ even ($n \cdot (n-1)$) and $n$ odd ($(n-1)\cdot n$) alike. Since $s_0 = a_0 = 1$ and $s_1 = a_1 = 1$, the sequences coincide.
\end{proof}

We emphasize that while the row sums agree, the triangle $\mStirlingone{n}{k}_2$ does \emph{not} refine the odd-order permutations by their number of cycles: for $n = 5$ the odd-order permutations count $(24, 20, 1)$ by $(1, 3, 5)$ cycles, whereas row 5 of the triangle reads $(16, 20, 8, 1)$ for $k = (2,3,4,5)$. Finding a \emph{natural} statistic on odd-order permutations that is equidistributed with the $k$-statistic of Theorem \ref{thm-insertion-model}---equivalently, an \emph{explicit} bijection between odd-order permutations and the parity-admissible permutations of Theorem \ref{thm-insertion-model} (any two equinumerous finite sets admit some bijection, so it is the explicitness that carries the content)---remains an open problem (Section \ref{sec-final-remarks}).

\subsection{Bessel--Lah numbers (the third kind)}

The classical Lah numbers $\Lah{n}{k} = \binom{n-1}{k-1}\frac{n!}{k!}$ convert rising factorials into falling factorials, and equal the matrix product of the unsigned first-kind and second-kind Stirling triangles. The same construction goes through in the Bessel setting.

\begin{definition}[Bessel--Lah numbers]
Define $\mLah{n}{k}_2$ as the connection coefficients between the Bessel rising and falling factorial bases:
\[
x^{\ooline{n}} = \sum_{k=0}^n \mLah{n}{k}_2\; x^{\uuline{k}}.
\]
\end{definition}

\begin{theorem}\label{thm-bessel-lah}
The Bessel--Lah numbers satisfy:
\begin{enumerate}
    \item[(i)] $\mLah{n}{k}_2 = \sum_{j} \mStirlingone{n}{j}_2 \mStirling{j}{k}_2$ \,(matrix product of the two Bessel--Stirling triangles);
    \item[(ii)] the two-term recurrence
    \[
    \mLah{n+1}{k}_2 = \left(2\floor{n/2} + 2\floor{k/2}\right)\mLah{n}{k}_2 + \mLah{n}{k-1}_2,
    \]
    with $\mLah{0}{k}_2 = \delta_{0,k}$.
\end{enumerate}
\end{theorem}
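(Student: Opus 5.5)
For (i) I would argue purely by change of basis, as in Proposition \ref{prop-orthogonality}. Theorem \ref{thm-bessel-factorial-expansion} expands the rising factorial in monomials, $x^{\ooline{n}} = \sum_j \mStirlingone{n}{j}_2 x^j$. Theorem \ref{thm-inverse-expansion} then rewrites each monomial in the falling basis, $x^j = \sum_k \mStirling{j}{k}_2 x^{\uuline{k}}$. Substituting one into the other and exchanging the finite sums gives
\[
x^{\ooline{n}} = \sum_k \Bigl(\sum_j \mStirlingone{n}{j}_2 \mStirling{j}{k}_2\Bigr) x^{\uuline{k}}.
\]
The falling factorials $x^{\uuline{k}}$ contain exactly one polynomial of each degree, so they form a basis and the coefficients are unique. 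Comparing with the definition of $\mLah{n}{k}_2$ therefore gives (i).

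For (ii) I would work directly in the falling basis, imitating the proof of Theorem \ref{thm-inverse-expansion}. The rising factorials satisfy $x^{\ooline{n+1}} = x^{\ooline{n}}\,(x + 2\floor{n/2})$. Start from $x^{\ooline{n}} = \sum_k \mLah{n}{k}_2 x^{\uuline{k}}$ and multiply by $x + 2\floor{n/2}$. The $x$ part is handled by \eqref{eq-x-action-falling}, $x\cdot x^{\uuline{k}} = x^{\uuline{k+1}} + 2\floor{k/2}\,x^{\uuline{k}}$. The constant part $2\floor{n/2}$ simply rescales each $x^{\uuline{k}}$. Collecting the coefficient of $x^{\uuline{k}}$ gives
\[
\mLah{n}{k-1}_2 + \bigl(2\floor{k/2} + 2\floor{n/2}\bigr)\mLah{n}{k}_2,
\]
and uniqueness of coefficients in the basis identifies this with $\mLah{n+1}{k}_2$.

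The initial condition $\mLah{0}{k}_2 = \delta_{0,k}$ holds because $x^{\ooline{0}} = 1 = x^{\uuline{0}}$. The boundary convention for $k<0$ is automatic, since no $x^{\uuline{-1}}$ term ever appears.

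There is no serious obstacle here. The only care needed is bookkeeping in (ii): the shift-by-one term $x^{\uuline{k+1}}$ must be reindexed to land on $x^{\uuline{k}}$ with coefficient $\mLah{n}{k-1}_2$, and the two diagonal weights, $2\floor{k/2}$ from the $x$-action and $2\floor{n/2}$ from the new factor, must be attached to the correct indices. As an optional check that (i) and (ii) agree, one could also derive (ii) from (i) by combining the two recurrences of Definition \ref{def-bs1} and \eqref{eq-bs-recurrence}. I would only mention this, since the basis argument already proves (ii) directly.
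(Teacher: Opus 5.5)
Your proposal is correct and follows the paper's proof essentially verbatim: you get (i) by composing the expansions of Theorems \ref{thm-bessel-factorial-expansion} and \ref{thm-inverse-expansion} and using uniqueness of coefficients in the falling basis, and (ii) by multiplying the defining expansion by $x + 2\floor{n/2}$, applying \eqref{eq-x-action-falling}, and collecting the coefficient of $x^{\uuline{k}}$. Your explicit verification of $\mLah{0}{k}_2 = \delta_{0,k}$ from $x^{\ooline{0}} = 1 = x^{\uuline{0}}$ is a small detail that the paper leaves implicit.
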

\begin{proof}
(i) Compose the expansions of Theorems \ref{thm-bessel-factorial-expansion} and \ref{thm-inverse-expansion}: $x^{\ooline{n}} = \sum_j \mStirlingone{n}{j}_2 x^j = \sum_j \mStirlingone{n}{j}_2 \sum_k \mStirling{j}{k}_2 x^{\uuline{k}}$.
(ii) Multiply the defining expansion by the next rising factor and use \eqref{eq-x-action-falling}:
\[
x^{\ooline{n+1}} = \left(x + 2\floor{n/2}\right)\sum_k \mLah{n}{k}_2 x^{\uuline{k}}
= \sum_k \mLah{n}{k}_2 \left( x^{\uuline{k+1}} + 2\floor{k/2}x^{\uuline{k}} + 2\floor{n/2}x^{\uuline{k}}\right),
\]
and collect the coefficient of $x^{\uuline{k}}$, using uniqueness of coefficients in the basis $\{x^{\uuline{k}}\}$.
\end{proof}

The first few rows are shown in Table \ref{tab-bessel-stirling3}. Note how the recurrence interpolates the classical Lah recurrence $\Lah{n+1}{k} = (n+k)\Lah{n}{k} + \Lah{n}{k-1}$, doubling-and-flooring both parameters. The row sums $1, 1, 1, 5, 25, 177, 1273, 11893, 112849, 1315265, \ldots$ (the Bessel analogue of A000262, the number of sets of lists) do not currently appear in the OEIS.

Figure \ref{fig:commutative-diagram} summarizes how all six triangles act as conversion operators among the five polynomial bases.

\begin{figure}[h]
\centering
\[
\begin{tikzcd}[column sep=9em, row sep=4.5em]
    x^{\overline{n}}
        \arrow[rd, bend left=45, "(-1)^{n-k}\Stirling{n}{k}" description]
        \arrow[dd, bend left=15, "{(-1)^{n-k}\Lah{n}{k}}" description]
    & &
    x^{\ooline{n}}
        \arrow[ld, bend left=-45, swap, "(-1)^{n-k}\mStirling{n}{k}_2" description]
        \arrow[dd, bend left=-15, swap, "{(-1)^{n-k}\mLah{n}{k}_2}" description]
    \\
    &
    x^{n}
        \arrow[ul, bend left=15, "\Stirlingone{n}{k}" description]
        \arrow[ur, bend left=-15, swap, "\mStirlingone{n}{k}_2" description]
        \arrow[dl, bend left=45, "(-1)^{n-k}\Stirlingone{n}{k}" description]
        \arrow[dr, bend left=-45, swap, "(-1)^{n-k}\mStirlingone{n}{k}_2" description]
    &\\
    x^{\underline{n}}
        \arrow[uu, bend left=45, "{\Lah{n}{k}}" description]
        \arrow[ur, bend left=15, "\Stirling{n}{k}" description]
    & &
    x^{\uuline{n}}
        \arrow[uu, bend left=-45, swap, "{\mLah{n}{k}_2}" description]
        \arrow[ul, bend left=-15, swap, "\mStirling{n}{k}_2" description]
\end{tikzcd}
\]
\caption{Stirling numbers and Bessel--Stirling numbers as transformations between polynomial bases. The regular Stirling (and Lah) numbers convert between ordinary powers (middle), falling powers (bottom left), and rising powers (top left). The Bessel--Stirling numbers convert between ordinary powers, Bessel falling powers (bottom right), and Bessel rising powers (top right). An arrow from basis $u$ to basis $v$ labeled $c_{n,k}$ encodes the expansion $v_n = \sum_k c_{n,k}\,u_k$.}
\label{fig:commutative-diagram}
\end{figure}

\section{The general case}\label{sec-general-case}
\noindent Having explored the case $m=2$ in depth, we now develop the theory for arbitrary $m \geq 1$. Remarkably, the central structural result---that the $m$-Stirling triangle's row sums produce the $m$-Bell numbers, with the residue classes of $k \bmod m$ producing the primitive sequences---admits a completely elementary proof, independent of any special function theory. We give that proof first, and then develop the analytic counterparts (hyper-Bessel carrier functions, hypergeometric e.g.f.s, and a general Dobi\'nski formula).

\subsection{The $m$-Stirling triangle and the shift identity}

\begin{definition}[$m$-Stirling numbers of the second kind]
For an integer $m \geq 1$, define $\mStirling{n}{k}_m$ for $n, k \geq 0$ by $\mStirling{0}{k}_m = \delta_{0,k}$, the convention $\mStirling{n}{k}_m = 0$ for $k < 0$, and the recurrence
\begin{equation}\label{eq-mstirling-recurrence}
\mStirling{n+1}{k}_m = m\floor{k/m} \, \mStirling{n}{k}_m + \mStirling{n}{k-1}_m.
\end{equation}
For $m = 1$ this is the classical Stirling triangle of the second kind; for $m=2$, the Bessel--Stirling triangle of Section \ref{sec-2-stirling}. Table \ref{tab-3stirling} displays the triangle for $m = 3$.
\end{definition}

\begin{table}[htbp]
    \centering
        \begin{tabular}{|l|*{10}{c}|c|}
        \hline
              & k=0 & k=1 & k=2 & k=3 & k=4 & k=5 & k=6 & k=7 & k=8 & k=9 & $\sum_{k=0}^{n}$ \\
        \hline
        n=0 & 1   &     &     &     &     &     &     &     &     &     & 1                \\
        n=1 & 0   & 1   &     &     &     &     &     &     &     &     & 1                \\
        n=2 & 0   & 0   & 1   &     &     &     &     &     &     &     & 1                \\
        n=3 & 0   & 0   & 0   & 1   &     &     &     &     &     &     & 1                \\
        n=4 & 0   & 0   & 0   & 3   & 1   &     &     &     &     &     & 4                \\
        n=5 & 0   & 0   & 0   & 9   & 6   & 1   &     &     &     &     & 16               \\
        n=6 & 0   & 0   & 0   & 27  & 27  & 9   & 1   &     &     &     & 64               \\
        n=7 & 0   & 0   & 0   & 81  & 108 & 54  & 15  & 1   &     &     & 259              \\
        n=8 & 0   & 0   & 0   & 243 & 405 & 270 & 144 & 21  & 1   &     & 1084             \\
        n=9 & 0   & 0   & 0   & 729 & 1458& 1215& 1134& 270 & 27  & 1   & 4834             \\
        \hline
        \end{tabular}
        \bigskip
    \caption{The $3$-Stirling numbers of the second kind, $\mStirling{n}{k}_3$.}
    \label{tab-3stirling}
\end{table}

\begin{lemma}\label{lem-identity-rows}
$\mStirling{n}{k}_m = 0$ for $k > n$, $\mStirling{n}{n}_m = 1$, and the first $m+1$ rows form an identity block:
\[
\mStirling{n}{k}_m = \delta_{n,k} \qquad \text{for } 0 \leq n \leq m.
\]
\end{lemma}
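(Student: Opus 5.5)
All three claims follow by induction on $n$ directly from the recurrence \eqref{eq-mstirling-recurrence} with the initial row $\mStirling{0}{k}_m = \delta_{0,k}$. No analytic input is needed; only the observation that the weight $m\floor{k/m}$ vanishes for $0 \le k \le m-1$ is used.

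\emph{Vanishing above the diagonal.} Assume $\mStirling{n}{k}_m = 0$ for all $k > n$; this holds for $n=0$ by the initial condition. If $k > n+1$, then $k > n$ and $k-1 > n$, so both terms on the right of \eqref{eq-mstirling-recurrence} vanish and $\mStirling{n+1}{k}_m = 0$.

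\emph{Unit diagonal.} Take $k = n+1$ in the recurrence. The first term is $m\floor{(n+1)/m}\mStirling{n}{n+1}_m$, which is zero by the previous step. Hence $\mStirling{n+1}{n+1}_m = \mStirling{n}{n}_m$, and iterating from $\mStirling{0}{0}_m = 1$ gives $\mStirling{n}{n}_m = 1$.

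\emph{Identity block.} Fix $0 \le n \le m-1$ and assume $\mStirling{n}{k}_m = \delta_{n,k}$ for all $k$; the base case $n=0$ is the initial condition. For $\mStirling{n+1}{k}_m$ the first term $m\floor{k/m}\mStirling{n}{k}_m$ is nonzero only if $k = n$. But $n \le m-1$ forces $\floor{n/m} = 0$, so this term is always zero. Therefore
\[
\mStirling{n+1}{k}_m = \mStirling{n}{k-1}_m = \delta_{n,k-1} = \delta_{n+1,k},
\]
with the convention $\mStirling{n}{-1}_m = 0$ taking care of $k=0$. This propagates the identity pattern through row $n+1 \le m$, which covers all rows $0 \le n \le m$.

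The only point needing care is that this induction must stop at $n = m-1$, producing row $m$. At $n = m$ the weight $m\floor{m/m} = m$ first becomes nonzero, so row $m+1$ has $\mStirling{m+1}{m}_m = m$. Table \ref{tab-3stirling} shows this with $\mStirling{4}{3}_3 = 3$, confirming that the bound $n \le m$ is sharp. Beyond this, no step is a real obstacle.
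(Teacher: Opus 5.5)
Your proof is correct and takes essentially the same route as the paper: induction on the recurrence gives the triangular zeros and the unit diagonal, and rows $0$ through $m$ follow because the weight $m\floor{k/m}$ vanishes for $k<m$, so these rows arise from $\delta_{0,k}$ by pure diagonal shifting. The only cosmetic difference is that the paper checks the entry $\mStirling{m}{m}_m$ separately, whereas your inductive step absorbs it by noting that the weighted term can be nonzero only at $k=n\le m-1$.
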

\begin{proof}
The first two claims follow by induction as in the classical case. For the third, note that the coefficient $m\floor{k/m}$ in \eqref{eq-mstirling-recurrence} vanishes whenever $k < m$. Hence, for rows $n < m$, where only entries with $k \leq n < m$ can be nonzero, the recurrence degenerates to a pure shift $\mStirling{n+1}{k}_m = \mStirling{n}{k-1}_m$, which propagates the initial row $\delta_{0,k}$ diagonally. For row $n = m$, the only new entry is $\mStirling{m}{m}_m = m\floor{m/m}\mStirling{m-1}{m}_m + \mStirling{m-1}{m-1}_m = 0 + 1 = 1$.
\end{proof}

We now come to the central result of the paper, an $m$-fold analogue of the classical shift identity \eqref{eq-stirling-shift}. It explains, at the level of individual triangle entries, why the row sums shift by $m$ places after $m$ binomial transforms.

\begin{theorem}[Shift identity for $m$-Stirling numbers]\label{thm-shift-identity}
For all integers $n \geq 0$ and $k \geq 0$,
\begin{equation}\label{eq-shift-identity}
\mStirling{n+m}{k}_m = \sum_{j=0}^{n}\binom{n}{j}\, m^{n-j}\, \mStirling{j}{k-m}_m.
\end{equation}
For $m = 1$ this is the classical identity \eqref{eq-stirling-shift}.
\end{theorem}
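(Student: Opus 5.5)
We prove \eqref{eq-shift-identity} by induction on $n$, holding $k$ arbitrary. Write $R_n(k)$ for the right-hand side. The key observation is that the weight in \eqref{eq-mstirling-recurrence} is periodic up to a constant shift:
\[
m\floor{k/m} = m\floor{(k-m)/m} + m .
\]
So the recurrence at column $k$ is the recurrence at column $k-m$ plus an extra term $m\,\mStirling{n}{k}_m$. On the right-hand side, this extra $m$ is what the factor $m^{n-j}$ absorbs.

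\textbf{Base case $n=0$.} The claim reads $\mStirling{m}{k}_m = \mStirling{0}{k-m}_m = \delta_{k,m}$. This is exactly Lemma \ref{lem-identity-rows}; for $k<m$ the right side vanishes by the convention for negative lower index.

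\textbf{Inductive step.} Assume the identity holds for row $n$ and every $k$. We show that $R_n$ satisfies the same recurrence as the left-hand side, namely
\[
R_{n+1}(k) = m\floor{k/m}\,R_n(k) + R_n(k-1).
\]
Since $\mStirling{n+m+1}{k}_m = m\floor{k/m}\mStirling{n+m}{k}_m + \mStirling{n+m}{k-1}_m$, the induction hypothesis then closes the step.

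To verify the recurrence for $R$, apply Pascal's rule $\binom{n+1}{j}=\binom{n}{j}+\binom{n}{j-1}$ to $R_{n+1}(k)$. This splits it into two sums.

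The first sum is $\sum_j \binom{n}{j} m^{n+1-j}\mStirling{j}{k-m}_m = m\,R_n(k)$.

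The second sum is $\sum_j \binom{n}{j} m^{n-j}\mStirling{j+1}{k-m}_m$. Expand $\mStirling{j+1}{k-m}_m$ by \eqref{eq-mstirling-recurrence} at column $k-m$. This gives
\[
m\floor{(k-m)/m}\,R_n(k) + R_n(k-1),
\]
where the second piece uses $\mStirling{j}{k-m-1}_m = \mStirling{j}{(k-1)-m}_m$.

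Adding the two sums, the coefficients of $R_n(k)$ combine as $m + m\floor{(k-m)/m} = m\floor{k/m}$, which is the required recurrence.

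\textbf{Main obstacle: small columns.} For $k<m$ the step expands the recurrence at the negative column $k-m$. The recurrence is only defined for $k\ge 0$, so the floor identity is being used where it has no content. There both sides should vanish identically. The left side vanishes because $\mStirling{n+m}{k}_m = 0$ for $0\le k<m$ and $n\ge 0$: those columns stay $0$ beyond the identity block, since $m\floor{k/m}=0$ there and the entries are pure diagonal shifts of $\delta_{0,k}$. The right side vanishes because all its terms have negative lower index.

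I would therefore treat $k<m$ separately and run the algebraic step only for $k\ge m$. At $k=m$, the term $R_n(k-1)$ involves lower index $-1$, which is zero by convention. The value $j=0$ needs no special care: $\mStirling{1}{k-m}_m$ is still given by the recurrence with row $j=0$.
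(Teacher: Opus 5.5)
Your proof is correct and follows the paper's argument essentially step for step. You use induction on $n$ with Lemma \ref{lem-identity-rows} as the base case, apply Pascal's rule, expand $\mStirling{j+1}{k-m}_m$ by the recurrence using $\floor{(k-m)/m}=\floor{k/m}-1$, and close with the induction hypothesis at $k$ and $k-1$. The only difference is that you split off $0\le k<m$ and check directly that both sides vanish, whereas the paper notes that with the zero extension the recurrence holds trivially at negative columns, so the same computation runs uniformly for all $k$.
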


\begin{proof}
Induction on $n$, with $k$ arbitrary. For $n = 0$ the claim reads $\mStirling{m}{k}_m = \mStirling{0}{k-m}_m = \delta_{m,k}$, which is Lemma \ref{lem-identity-rows}. Assume \eqref{eq-shift-identity} holds for some $n \geq 0$ and all $k$. Using Pascal's rule $\binom{n+1}{j} = \binom{n}{j} + \binom{n}{j-1}$ and re-indexing the second sum,
\[
\sum_{j=0}^{n+1}\binom{n+1}{j}m^{n+1-j}\mStirling{j}{k-m}_m
= m\sum_{j=0}^{n}\binom{n}{j}m^{n-j}\mStirling{j}{k-m}_m + \sum_{j=0}^{n}\binom{n}{j}m^{n-j}\mStirling{j+1}{k-m}_m.
\]
Expanding $\mStirling{j+1}{k-m}_m$ by the recurrence \eqref{eq-mstirling-recurrence}, and noting $\floor{(k-m)/m} = \floor{k/m} - 1$, gives
\[
\mStirling{j+1}{k-m}_m = \left(m\floor{k/m} - m\right)\mStirling{j}{k-m}_m + \mStirling{j}{k-m-1}_m.
\]
Substituting this into the previous display, the two terms proportional to $m\,\mStirling{j}{k-m}_m$ cancel, leaving
\[
\begin{aligned}
\sum_{j=0}^{n+1}\binom{n+1}{j}m^{n+1-j}\mStirling{j}{k-m}_m
={}& m\floor{k/m} \sum_{j=0}^{n}\binom{n}{j}m^{n-j}\mStirling{j}{k-m}_m \\
&+ \sum_{j=0}^{n}\binom{n}{j}m^{n-j}\mStirling{j}{k-1-m}_m.
\end{aligned}
\]
By the induction hypothesis (applied at $k$ and at $k-1$), the right side equals $m\floor{k/m}\mStirling{n+m}{k}_m$ ${}+ \mStirling{n+m}{k-1}_m$, which is $\mStirling{n+m+1}{k}_m$ by \eqref{eq-mstirling-recurrence}, completing the induction. (All manipulations remain valid at the boundaries: we extend the array by $\mStirling{n}{k}_m = 0$ for $k < 0$, and with this extension the recurrence \eqref{eq-mstirling-recurrence} holds for every integer $k$---for $k < 0$ both sides vanish---so applying it in the column $k-m$, which is negative when $k < m$, is legitimate; the floor identity $\floor{(k-m)/m} = \floor{k/m} - 1$ also holds for all integers $k$.)
\end{proof}

\begin{corollary}[$m$-Bell numbers as row sums; primitive sequences as residue-class sums]\label{cor-mbell-rowsums}
Fix $m \geq 1$ and let
\[
\rho_r(n) = \sum_{\substack{k \geq 0 \\ k \equiv r \ (\mathrm{mod}\ m)}} \mStirling{n}{k}_m \quad (0 \leq r < m),
\qquad
\sigma(n) = \sum_{k=0}^{n} \mStirling{n}{k}_m .
\]
Then:
\begin{enumerate}
    \item[(i)] each $\rho_r$ satisfies the $m$-Bell recurrence \eqref{eq:m-shift-recurrence} with initial conditions $\rho_r(n) = \delta_{n,r}$ for $0 \leq n < m$; hence $\rho_r = B^{(m,r)}$ is the $r$-th primitive $m$-Bell sequence;
    \item[(ii)] the full row sums satisfy $\sigma = \sum_{r=0}^{m-1}\rho_r = B^{(m)}$, the composite $m$-Bell sequence;
    \item[(iii)] every sequence satisfying \eqref{eq:m-shift-recurrence} is the unique linear combination $\sum_{r} a_r B^{(m,r)}$ of the residue-class row sums determined by its initial values $a_0, \ldots, a_{m-1}$.
\end{enumerate}
\end{corollary}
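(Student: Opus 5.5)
The natural route is to sum the shift identity of Theorem \ref{thm-shift-identity} over a residue class of $k$. Fix $r$ with $0 \le r < m$ and sum \eqref{eq-shift-identity} over all $k \ge 0$ with $k \equiv r \pmod m$. The left side becomes $\rho_r(n+m)$. On the right we interchange the finite sums, which is legitimate because only $k \le n+m$ contribute, and this gives
\[
\rho_r(n+m) = \sum_{j=0}^{n}\binom{n}{j}m^{n-j}\sum_{k \equiv r}\mStirling{j}{k-m}_m .
\]
As $k$ runs over the nonnegative integers congruent to $r$, the index $k-m$ runs over all integers $\ge -m$ in the same residue class. The terms with $k-m<0$ vanish by the convention $\mStirling{j}{\ell}_m = 0$ for $\ell < 0$. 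So the inner sum is exactly $\rho_r(j)$, and $\rho_r$ satisfies the recurrence \eqref{eq:m-shift-recurrence} for every $n \ge 0$. This bookkeeping is the only delicate step: the reindexing $k \mapsto k-m$ must preserve the residue class and must not lose or gain terms at the boundary. Both properties hold because $m \equiv 0 \pmod m$ and because of the zero extension for negative indices.

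For the initial conditions, Lemma \ref{lem-identity-rows} gives $\mStirling{n}{k}_m = \delta_{n,k}$ for $0\le n\le m$, hence for $0 \le n < m$. Therefore $\rho_r(n) = \sum_{k\equiv r}\delta_{n,k}$. For $0 \le n < m$ this equals $1$ exactly when $n \equiv r \pmod m$, that is, when $n = r$ (since both lie in $[0,m)$). So $\rho_r(n)=\delta_{n,r}$. Equation \eqref{eq:m-shift-recurrence} expresses $a_{n+m}$ in terms of $a_0,\dots,a_n$, so a solution is determined by its first $m$ values. Uniqueness then gives $\rho_r = B^{(m,r)}$, proving (i).

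For (ii), note that the residue classes partition $\{0,\dots,n\}$, so $\sigma(n) = \sum_r \rho_r(n)$ for every $n$. By (i) and the definition $B^{(m)} = \sum_r B^{(m,r)}$, this gives $\sigma = B^{(m)}$. Alternatively, one can observe that $\sigma$ satisfies \eqref{eq:m-shift-recurrence}, since the recurrence is linear, and has initial values $(1,\dots,1)$.

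For (iii), let $a$ be any solution of \eqref{eq:m-shift-recurrence}. The sequence $\sum_r a_r B^{(m,r)}$ is again a solution, by linearity of the recurrence. Its value at $n<m$ is $\sum_r a_r\delta_{n,r}=a_n$, so it agrees with $a$ on the first $m$ terms. By the same uniqueness argument the two sequences coincide. The coefficients are forced, because evaluating any combination $\sum_r c_r B^{(m,r)}$ at $n<m$ returns $c_n$. I expect no real obstacle beyond the residue-class reindexing in the first step; everything else is linearity together with uniqueness of solutions to the order-$m$ recurrence.
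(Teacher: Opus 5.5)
Your proposal is correct and follows the paper's proof essentially step for step: you sum the shift identity over the residue class $k \equiv r \pmod m$ using that $k \mapsto k-m$ preserves the class, read off the initial values from Lemma \ref{lem-identity-rows}, and obtain (ii) and (iii) from linearity and the fact that a solution is determined by its first $m$ values. Your explicit treatment of the boundary terms with $k-m<0$ through the zero extension is extra care that the paper leaves implicit.
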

\begin{proof}
(i) Summing the shift identity \eqref{eq-shift-identity} over all $k \equiv r \pmod m$, and noting that $k \mapsto k - m$ preserves the residue class, gives
\[
\rho_r(n+m) = \sum_{j=0}^{n}\binom{n}{j}m^{n-j}\rho_r(j),
\]
which is exactly \eqref{eq:m-shift-recurrence}. The initial values follow from Lemma \ref{lem-identity-rows}: for $n < m$, row $n$ contains the single entry $1$ at $k = n$, contributing to $\rho_r$ if and only if $n \equiv r$, i.e.\ $n = r$. (ii) Sum (i) over $r$. (iii) A solution of \eqref{eq:m-shift-recurrence} is determined by its first $m$ values, and the primitives form a basis for the space of initial conditions.
\end{proof}

For $m = 2$, Corollary \ref{cor-mbell-rowsums} recovers Corollary \ref{cor-2bell-rowsums} by entirely elementary means. Table \ref{tab-mbell-numbers} lists the composite $m$-Bell numbers for $m \le 4$ and the primitive sequences for $m = 3$.

\begin{table}[htbp]
    \centering
    \begin{tabular}{|l|l|}
    \hline
    Sequence & First terms \\
    \hline
    $B^{(1)}$ (Bell, A000110) & 1, 1, 2, 5, 15, 52, 203, 877, 4140, 21147, 115975, 678570 \\
    $B^{(2)}$ (A007472) & 1, 1, 1, 3, 9, 29, 105, 431, 1969, 9785, 52145, 296155 \\
    $B^{(3)}$ & 1, 1, 1, 1, 4, 16, 64, 259, 1084, 4834, 23635, 128377 \\
    $B^{(4)}$ & 1, 1, 1, 1, 1, 5, 25, 125, 625, 3129, 15745, 80265 \\
    \hline
    $B^{(3,0)}$ & 1, 0, 0, 1, 3, 9, 28, 96, 387, 1864, 10242, 60534 \\
    $B^{(3,1)}$ & 0, 1, 0, 0, 1, 6, 27, 109, 426, 1728, 7858, 42084 \\
    $B^{(3,2)}$ & 0, 0, 1, 0, 0, 1, 9, 54, 271, 1242, 5535, 25759 \\
    \hline
    \end{tabular}
    \bigskip
    \caption{Composite $m$-Bell numbers for $m = 1, \ldots, 4$, and the three primitive $3$-Bell sequences (residue-class row sums of Table \ref{tab-3stirling}). The sequences for $m \ge 3$ are not currently in the OEIS.}
    \label{tab-mbell-numbers}
\end{table}

\subsection{The ODE in hypergeometric form and hyper-Bessel carriers}

We next generalize the analytic theory of Sections \ref{sec-m2} and \ref{sec-2-stirling}. As in the case $m=2$, set $t=e^x$ (so that $D_x = tD_t$) and $f(t)=\mathcal{A}(x)$, and recall the classical operator identity \cite{comtet1974, dattoliTouchardPolynomialsGeneralized2010}
\begin{equation}\label{eq-xd-operator-identity}
    (tD_t)^n = \sum_{k=0}^{n}\Stirling{n}{k}t^kD_t^k,
\end{equation}
which is itself yet another manifestation of the Stirling numbers as change-of-basis coefficients (from the basis $(tD)^k$ to the basis $t^kD^k$ of the Weyl algebra).

\begin{proposition}\label{prop-general-ode}
The e.g.f.\ $\mathcal{A}(x)$ of a sequence satisfying \eqref{eq:m-shift-recurrence} equals $f(e^x)$, where $f$ satisfies the $m$-th order ODE
\begin{equation}\label{eq-general-ode-t}
(tD_t)^m f = t^m f, \qquad \text{equivalently} \qquad \sum_{k=0}^m\Stirling{m}{k}\,t^k f^{(k)}(t) = t^m f(t).
\end{equation}
For $m=1$ this is $tf' = tf$, solved by $e^t$; for $m=2$ it is the modified Bessel equation \eqref{eq-m2-ode}.
\end{proposition}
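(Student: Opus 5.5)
The plan is to start from Proposition \ref{prop:mbell-egf}, which already gives the ODE $\mathcal{A}^{(m)}(x) = e^{mx}\mathcal{A}(x)$ for the e.g.f.\ of any sequence satisfying \eqref{eq:m-shift-recurrence}, and then perform the substitution $t = e^x$ exactly as in the proof of Theorem \ref{thm-m2-egf}. Since $t = e^x$ is a diffeomorphism of $\mathbb{R}$ onto $(0,\infty)$, we may define $f(t) = \mathcal{A}(\log t)$ for $t > 0$, so that $\mathcal{A}(x) = f(e^x)$. Equivalently, we can work formally with the series $f(t)$ in powers of $\log t$; for the analytic statement, convergence of the e.g.f.\ is not needed, because the computation is purely a change of variables in the differential operator.

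The key step is the chain rule identity $D_x = tD_t$. For any differentiable $g$, we have $\frac{\mathrm d}{\mathrm dx} g(e^x) = e^x g'(e^x) = (tD_t g)(t)$ at $t=e^x$. Iterating this by induction on $j$ gives $D_x^j \mathcal{A}(x) = \big((tD_t)^j f\big)(e^x)$. Taking $j = m$ and using $e^{mx} = t^m$, the equation $\mathcal{A}^{(m)} = e^{mx}\mathcal{A}$ becomes $(tD_t)^m f = t^m f$ on $(0,\infty)$, which is the first form of \eqref{eq-general-ode-t}.

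The equivalent form follows by invoking the classical operator identity \eqref{eq-xd-operator-identity} with $n = m$, which expands $(tD_t)^m f = \sum_{k=0}^m \Stirling{m}{k} t^k f^{(k)}$. If one wants this to be self-contained, it can be proved by induction from $tD_t(t^kD_t^k) = k\,t^kD_t^k + t^{k+1}D_t^{k+1}$, which reproduces the Stirling recurrence.

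Finally, the two special cases are checked directly. For $m=1$, $\Stirling{1}{1}=1$ gives $tf' = tf$, i.e.\ $f'=f$, solved by $e^t$. For $m=2$, $\Stirling{2}{1}=\Stirling{2}{2}=1$ gives $t^2f''+tf' = t^2f$, which is \eqref{eq-m2-ode}.

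I expect no step to be a genuine obstacle. The only point requiring care is the bookkeeping for the domain, namely that the equation is posed on $t>0$ (or, formally, in the variable $\log t$). This is handled exactly as in the $m=2$ argument.
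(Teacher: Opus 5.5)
Your proposal is correct and follows the paper's proof exactly: invoke Proposition~\ref{prop:mbell-egf}, substitute $t=e^x$ so that $D_x = tD_t$ and $e^{mx}=t^m$, and expand $(tD_t)^m$ via the Stirling operator identity \eqref{eq-xd-operator-identity}. Your extra details fill in steps the paper leaves implicit: the induction giving $D_x^j\mathcal{A} = \big((tD_t)^j f\big)(e^x)$, the self-contained proof of the operator identity, and the explicit checks at $m=1,2$.
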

\begin{proof}
By Proposition \ref{prop:mbell-egf}, $D_x^m \mathcal{A} = e^{mx}\mathcal{A}$. Substituting $D_x = tD_t$ and $e^{mx} = t^m$ gives $(tD_t)^m f = t^m f$; expanding via \eqref{eq-xd-operator-identity} gives the second form.
\end{proof}

Note that \eqref{eq-general-ode-t} is precisely the primary-carrier equation \eqref{eq-carrier-ode}. The distinguished solution analytic at $t = 0$ is a generalized hypergeometric function, and the full carrier system consists of the \emph{hyper-Bessel functions} of Delerue \cite{delerue1953, kiryakova1994}:

\begin{proposition}[Hyper-Bessel carrier systems]\label{prop-hyper-bessel}
For $m \geq 1$ define
\[
\begin{aligned}
C^{(m)}_0(x) &= \sum_{k=0}^\infty \frac{(x/m)^{mk}}{(k!)^m} = {}_0F_{m-1}\!\left(;1,\ldots,1;\left(\frac{x}{m}\right)^{m}\right), \\
C^{(m)}_r(x) &= \sum_{k=0}^\infty \frac{(x/m)^{mk+m-r}}{(k!)^{r}\,((k+1)!)^{m-r}} = \frac{(x/m)^{m-r}}{(1!)^{m-r}}\; {}_0F_{m-1}\!\left(;\underbrace{1,\ldots,1}_{r-1},\underbrace{2,\ldots,2}_{m-r};\left(\frac{x}{m}\right)^{m}\right)
\end{aligned}
\]
for $1 \le r \le m-1$. Then $(C^{(m)}_0, \ldots, C^{(m)}_{m-1})$ is a carrier system of modulus $m$ in the sense of Definition \ref{def-carrier-system}:
\[
(xD + r)\,C^{(m)}_r = x\, C^{(m)}_{(r+1) \bmod m}, \qquad r = 0, \ldots, m-1.
\]
In particular $C^{(m)}_0$ satisfies $(xD)^m C^{(m)}_0 = x^m C^{(m)}_0$. For $m=1$, $C^{(1)}_0 = e^x$; for $m = 2$, $(C^{(2)}_0, C^{(2)}_1) = (I_0, I_1)$.
\end{proposition}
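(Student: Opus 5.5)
The plan is a direct termwise verification on the power series. Every $C^{(m)}_r$ is an entire series in which only exponents in a single residue class modulo $m$ occur. So $xD$ acts diagonally on monomials, $(xD)x^{p} = p\,x^{p}$, and multiplication by $x$ shifts the exponent by one. It is therefore enough to compare the coefficients of each monomial on the two sides of $(xD + r)C^{(m)}_r = x\,C^{(m)}_{(r+1)\bmod m}$. Absolute convergence for all $x$, by the ratio test, justifies termwise differentiation.

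\textbf{Case $r=0$.} The general term of $C^{(m)}_0$ is $(x/m)^{mk}/(k!)^m$. Applying $xD$ multiplies it by $mk$. The $k=0$ term vanishes, and for $k\ge1$ we get
\[
\frac{mk\,(x/m)^{mk}}{(k!)^m} = x\cdot\frac{(x/m)^{mk-1}}{(k-1)!\,(k!)^{m-1}}.
\]
Re-indexing $k\mapsto k+1$ gives $x\sum_{k\ge0}(x/m)^{mk+m-1}/\big(k!\,((k+1)!)^{m-1}\big)$, which is exactly $x\,C^{(m)}_1$ by the defining formula with $r=1$. (For $m=1$ the index $1$ is read mod $1$, i.e.\ $C_0$ itself, and the same computation gives $e^x$.)

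\textbf{Case $1\le r\le m-2$.} The term $(x/m)^{mk+m-r}$ of $C^{(m)}_r$ is multiplied by $(mk+m-r)+r = m(k+1)$. Writing
\[
m(k+1)(x/m)^{mk+m-r} = x\,(k+1)\,(x/m)^{mk+m-r-1},
\]
and absorbing the factor $k+1$ into one of the $(k+1)!$ in the denominator, turns the denominator $(k!)^r((k+1)!)^{m-r}$ into $(k!)^{r+1}((k+1)!)^{m-r-1}$. This is precisely the general term of $C^{(m)}_{r+1}$ times $x$, with no re-indexing needed.

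\textbf{Case $r=m-1$ (wrap-around).} The term of $C^{(m)}_{m-1}$ is $(x/m)^{mk+1}/\big((k!)^{m-1}(k+1)!\big)$, and $(xD+m-1)$ multiplies it by $m(k+1)$. Dividing out as before, $m(k+1)(x/m)^{mk+1} = x\,(k+1)(x/m)^{mk}$, and the factor $k+1$ cancels against $(k+1)!$ to leave $(k!)^m$. The result is $x\,(x/m)^{m(k+1)-m}\cdots$, which after shifting $k$ is the general term $(x/m)^{m(k+1)}/((k+1)!)^m$ of $C^{(m)}_0$ with the constant term excluded. I expect this step to be the main obstacle. The bookkeeping must show that the constant term $1$ of $C^{(m)}_0$ is correctly absent from $x\,C^{(m)}_0$: it is, since multiplying by $x$ produces only exponents $\ge1$, and the exponents match as $mk+1 = m(k'+1)-m+1$ with $k'=k$. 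I will check this cleanly by comparing the coefficient of $x^{mk+1}$ on both sides.

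The remaining assertions then follow quickly. Chaining the $m$ relations as in \eqref{eq-carrier-ode} yields $(xD)^m C^{(m)}_0 = x^m C^{(m)}_0$. This uses that $xD$ commutes with multiplication by constants, and that $x^{-1}(xD+r)x = xD+r+1$, so the successive shifts telescope. For the hypergeometric forms, I will rewrite $(k!)^{r}((k+1)!)^{m-r} = (k!)^m\,((2)_k)^{m-r}$ and $(k!)^m = k!\,((1)_k)^{m-1}$, which matches the ${}_0F_{m-1}$ series in $(x/m)^m$. Finally, the cases $m=1$ and $m=2$ are immediate: $C^{(1)}_0=\sum x^k/k!=e^x$, and for $m=2$ the series coincide with the DLMF series for $I_0$ and $I_1$ quoted in the proof of Theorem \ref{thm-bessel-dobiski}.
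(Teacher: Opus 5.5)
Your proof follows the paper's: a termwise check on the series in three cases, then chaining the relations and a Pochhammer rewrite. The cases are $r=0$ (where $xD$ kills the constant term and the shift $k\mapsto k+1$ lands on $C^{(m)}_1$), the generic step $r\mapsto r+1$ (no shift, one factor $k+1$ absorbed), and the wrap-around $r=m-1$. Two local statements need correcting.

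\emph{The wrap-around case.} Your own cancellation already finishes the job. The $k$-th term of $C^{(m)}_{m-1}$ is sent to $x\,(x/m)^{mk}/(k!)^m$. This is $x$ times the $k$-th term of $C^{(m)}_0$ for every $k\ge 0$, with no shift and nothing excluded. In particular the constant term $1$ of $C^{(m)}_0$ is not ``absent''. It is exactly what matches the $k=0$ term on the left, namely $(xD+m-1)(x/m)=m\cdot x/m=x$.

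Read literally, your sentence about shifting $k$ and excluding the constant term would produce $x\,(C^{(m)}_0-1)$, which is wrong in the coefficient of $x$. The bookkeeping worry you anticipate belongs to the $r=0$ case, not to the wrap-around. Your planned comparison of the coefficients of $x^{mk+1}$, with $k'=k$, is the correct check and shows there is no obstacle here.

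\emph{The Pochhammer rewrite.} The identity should read $(k!)^r((k+1)!)^{m-r}=k!\,((1)_k)^{r-1}((2)_k)^{m-r}=(k!)^r((2)_k)^{m-r}$, not $(k!)^m((2)_k)^{m-r}$. With that fix the ${}_0F_{m-1}$ identification goes through.

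The chaining argument via $x^{-1}(xD+r)\,x=xD+r+1$ and the special cases $m=1,2$ are fine as written.
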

\begin{proof}
Direct computation on the series. For $r = 0$: $(xD)$ multiplies the $k$-th term of $C_0$ by $mk$, and
\[
\frac{mk}{m^{mk}(k!)^m}x^{mk} = \frac{x \cdot x^{mk-1}}{m^{mk-1}(k-1)!\,(k!)^{m-1}},
\]
which after the shift $k \mapsto k+1$ is $x$ times the $k$-th term of $C_1$. For $1 \le r \le m-1$: $(xD + r)$ multiplies the $k$-th term of $C_r$ by $(mk + m - r) + r = m(k+1)$, and
\[
\frac{m(k+1)}{m^{mk+m-r}\,(k!)^r\,((k+1)!)^{m-r}}\,x^{mk+m-r} = \frac{x\cdot x^{mk+m-(r+1)}}{m^{mk+m-(r+1)}\,(k!)^{r+1}\,((k+1)!)^{m-(r+1)}},
\]
which is $x$ times the $k$-th term of $C_{r+1}$ when $r + 1 \le m - 1$, and---since the case $r+1 = m$ reads $x\, x^{mk}/\big(m^{mk}(k!)^{m}\big)$---is $x$ times the $k$-th term of $C_0$ when $r = m-1$. The identification with ${}_0F_{m-1}$ follows by rewriting the factorials as Pochhammer symbols. The eigenvalue equation is \eqref{eq-carrier-ode}.
\end{proof}

In the notation of Delerue's hyper-Bessel functions $I^{(m-1)}_{\nu_1,\ldots,\nu_{m-1}}$ \cite{delerue1953, kiryakova1994}, we have $C^{(m)}_0 = I^{(m-1)}_{(0,\ldots,0)}$ and, for $1 \le r \le m-1$, $C^{(m)}_r = I^{(m-1)}_{(0,\ldots,0,1,\ldots,1)}$ with $r-1$ zeros and $m-r$ ones. The carrier expansion theorem now holds verbatim:

\begin{theorem}[General carrier expansion]\label{thm-general-carrier-expansion}
Let $(f_0, \ldots, f_{m-1})$ be any carrier system of modulus $m$ for $xD$. Then
\[
(xD)^n f_0(x) = \sum_{k=0}^n \mStirling{n}{k}_m\, x^k f_{k \bmod m}(x),
\qquad
f_0(xe^t) = \sum_{n=0}^\infty \frac{t^n}{n!}\sum_{k=0}^n \mStirling{n}{k}_m\, x^k f_{k \bmod m}(x),
\]
the second identity read formally or, for analytic $f_0$, as explained after Theorem \ref{thm-carrier-expansion}.
\end{theorem}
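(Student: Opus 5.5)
The plan is to mirror the proof of Theorem \ref{thm-carrier-expansion} exactly, replacing the modulus $2$ by $m$. I would argue by induction on $n$ that $(xD)^n f_0 = \sum_k \mStirling{n}{k}_m x^k f_{k \bmod m}$. The base case $n=0$ is the initial condition $\mStirling{0}{k}_m=\delta_{0,k}$, which gives $(xD)^0 f_0 = f_0$.

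For the inductive step I would compute the action of $xD$ on a single basis element $x^k f_r$ with $r = k \bmod m$. Since $xD$ is a derivation, $(xD)x^k = kx^k$, and the carrier relation \eqref{eq-carrier-system} gives $(xD)f_r = x f_{(r+1)\bmod m} - r f_r$, we get
\[
(xD)\big(x^k f_r\big) = (k-r)\,x^k f_r + x^{k+1} f_{(r+1)\bmod m}.
\]
Two bookkeeping facts make this close. First, $k - (k \bmod m) = m\floor{k/m}$. Second, $(r+1)\bmod m = (k+1)\bmod m$, so the second term is exactly the basis element $x^{k+1} f_{(k+1)\bmod m}$. Applying $xD$ to the inductive expression term by term, the coefficient of $x^k f_{k\bmod m}$ in $(xD)^{n+1}f_0$ is then $m\floor{k/m}\mStirling{n}{k}_m + \mStirling{n}{k-1}_m$. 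This equals $\mStirling{n+1}{k}_m$ by \eqref{eq-mstirling-recurrence}.

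One subtlety deserves a remark. The functions $x^k f_{k \bmod m}$ need not be linearly independent, so "collecting coefficients" should be read as a formal identity. We regroup a finite sum according to the index $k$, and no uniqueness of coefficients is needed. This is the only point requiring care, and it is purely notational.

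For the second identity, I would sum $\frac{t^n}{n!}(xD)^n f_0$ over $n$ and use the scaling identity \eqref{eq-scaling-operator}, $e^{txD}f_0(x) = f_0(xe^t)$. Formally, this is the definition of $e^{txD}$ as a power series in $t$. Analytically, I would invoke Taylor's theorem for $s\mapsto f_0(xe^s)$. Its $n$-th derivative at $s=0$ is $(xD)^n f_0(x)$, because $\frac{d}{ds}g(xe^s) = (xD g)(xe^s)$, and iterating this gives the higher derivatives. Convergence then holds under the analyticity hypothesis described after Theorem \ref{thm-carrier-expansion}. Substituting the first identity into the Taylor series gives the stated expansion.
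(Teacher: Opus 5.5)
Your proposal is correct and follows the paper's proof essentially verbatim. Both argue by induction on $n$, use that $xD$ is a derivation to get $(xD)(x^k f_r) = (k-r)x^k f_r + x^{k+1}f_{(r+1)\bmod m}$ with $k-r = m\floor{k/m}$, and obtain the second expansion from the scaling identity. Your extra remarks are accurate and only make explicit what the paper leaves implicit: the regrouping needs no linear independence of the $x^k f_{k\bmod m}$, and the chain rule $\tfrac{d}{ds}g(xe^s) = (xDg)(xe^s)$ justifies the Taylor step.
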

\begin{proof}
Identical to that of Theorem \ref{thm-carrier-expansion}: since $xD$ is a derivation, $(xD)(x^k f_r) = (k-r)x^kf_r + x^{k+1}f_{(r+1) \bmod m}$ for $r = k \bmod m$, and $k - r = m\floor{k/m}$; the coefficients therefore evolve by \eqref{eq-mstirling-recurrence}.
\end{proof}

For the hyper-Bessel system of Proposition \ref{prop-hyper-bessel} we retain the notation of Theorem \ref{thm-carrier-expansion} and write $V_n(x) = \sum_{k} \mStirling{n}{k}_m x^k\, C^{(m)}_{k \bmod m}(x)$. We also call
\[
\mathscr{B}^{(m)}_n(x) = \sum_{k=0}^{n} \mStirling{n}{k}_m\, x^k
\]
the \emph{$m$-Touchard polynomials}; by Corollary \ref{cor-mbell-rowsums}, $\mathscr{B}^{(m)}_n(1) = B^{(m)}_n$.

\subsection{A general Dobi\'nski formula}

Composing the primary hyper-Bessel carrier with the exponential produces a distinguished solution of the $m$-Bell ODE, whose Maclaurin coefficients generalize the Dobi\'nski series---and the coefficients of the carriers in those coefficients are precisely the primitive $m$-Bell numbers.

\begin{theorem}[Generalized Dobi\'nski formula]\label{thm-general-dobinski}
Fix $m \geq 1$ and write $C_r = C^{(m)}_r$. The function $\mathcal{A}(x) = C_0(e^x)$ satisfies $\mathcal{A}^{(m)} = e^{mx}\mathcal{A}$, and its Maclaurin coefficients
\[
S_m(n) := \sum_{k=0}^{\infty} \frac{(mk)^n}{\left(m^{k}\,k!\right)^{m}}
\]
satisfy
\[
S_m(n) = \sum_{r=0}^{m-1} B^{(m,r)}_n\, C_r(1),
\]
where $B^{(m,r)}$ are the primitive $m$-Bell sequences. For $m = 1$ this is Dobi\'nski's formula $\sum_k k^n/k! = e\,B_n$; for $m=2$ it is Theorem \ref{thm-bessel-dobiski}.
\end{theorem}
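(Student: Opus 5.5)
The plan is to follow the carrier-system argument of Theorem~\ref{thm-besselk-dobinski} rather than the series induction. There are four steps.

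First, show that $\mathcal{A}(x)=C_0(e^x)$ solves the $m$-Bell ODE. Proposition~\ref{prop-hyper-bessel} gives $(tD_t)^m C_0 = t^m C_0$. Under $t=e^x$ we have $D_x = tD_t$, so $D_x^m\mathcal{A} = e^{mx}\mathcal{A}$; this is just Proposition~\ref{prop-general-ode} read backwards.

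Second, identify the Maclaurin coefficients with $S_m(n)$. Substitute $x\mapsto e^x$ in the series for $C_0$ and expand each $e^{mkx}$, as in \eqref{eq-besseli-exp-maclaurin}:
\[
C_0(e^x)=\sum_{k\ge0}\frac{e^{mkx}}{m^{mk}(k!)^m}=\sum_{n\ge0}\frac{x^n}{n!}\sum_{k\ge0}\frac{(mk)^n}{(m^k k!)^m}.
\]
The interchange of sums is justified by absolute convergence, since the terms are dominated by $e^{mk|x|}/(k!)^m$. Hence $\mathcal{A}^{(n)}(0)=S_m(n)$, with the convention $0^0=1$.

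Third, express $S_m(n)$ through the primitive sequences, and this is where the real content lies. The cleanest route uses the general carrier expansion, Theorem~\ref{thm-general-carrier-expansion}, applied to $(C_0,\dots,C_{m-1})$ and evaluated at $x=1$:
\[
C_0(e^t)=\sum_n \frac{t^n}{n!}\sum_{k=0}^n \mStirling{n}{k}_m C_{k\bmod m}(1).
\]
Grouping $k$ by residue class gives
\[
S_m(n)=\sum_{r=0}^{m-1}C_r(1)\sum_{k\equiv r}\mStirling{n}{k}_m .
\]
By Corollary~\ref{cor-mbell-rowsums}(i) the inner sum is $\rho_r(n)=B^{(m,r)}_n$. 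The step needing care is the analytic validity of the $t$-expansion at $x=1$. Because $C_0$ is entire (a ${}_0F_{m-1}$ series), the remark after Theorem~\ref{thm-carrier-expansion} applies, and we may also compare Maclaurin coefficients directly, since $D_t^n C_0(e^t)|_{t=0}=((xD)^nC_0)(1)$.

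Fourth, as a cross-check and an alternative, run the strong induction of Theorem~\ref{thm-bessel-dobiski}. The shift $k\mapsto k+1$ yields $S_m(n+m)=\sum_j\binom{n}{j}m^{n-j}S_m(j)$, because $(mk)^m/(m^{mk}(k!)^m)$ reindexes to $1/(m^{m(k-1)}((k-1)!)^m)$. This reduces the claim to the base cases $0\le n<m$, namely $S_m(n)=C_n(1)$. Checking those bases requires matching $\sum_k (mk)^n/(m^k k!)^m$ with the series for $C_n$, which is the messier computation; the carrier route avoids it.

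Finally, for $m=1$ we have $C_0=e^x$, so the identity becomes $\sum_k k^n/k!=e\,B_n$, Dobiński's formula. For $m=2$ it is Theorem~\ref{thm-bessel-dobiski}.
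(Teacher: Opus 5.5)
Your proof is correct, and Steps 1--3 are essentially the paper's argument: expand $C_0(e^x)$ termwise to get $S_m(n)$, evaluate the general carrier expansion at $x=1$, group the terms by $k \bmod m$, and invoke Corollary \ref{cor-mbell-rowsums}(i). Your Step 4 alternative, the direct generalization of the induction in Theorem \ref{thm-bessel-dobiski}, is also sound, and its base cases are easier than you suggest: the shift $k\mapsto k+1$ gives $S_m(r)=C_r(1)$ for $0\le r<m$ in one line, exactly as in the proof of Theorem \ref{thm-cmp-falling}.
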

\begin{proof}
Expanding the series of $C_0$ at $e^x$ and swapping summation order (absolute convergence),
\[
C_0(e^x) = \sum_{k=0}^\infty \frac{e^{mkx}}{m^{mk}(k!)^m} = \sum_{n=0}^\infty \frac{x^n}{n!} \sum_{k=0}^\infty \frac{(mk)^n}{m^{mk}(k!)^m} = \sum_{n=0}^\infty S_m(n)\frac{x^n}{n!}.
\]
On the other hand, setting $x = 1$ in Theorem \ref{thm-general-carrier-expansion} and using $e^{txD}f(x)\big|_{x=1} = f(e^t)$:
\[
C_0(e^t) = \sum_{n=0}^\infty \frac{t^n}{n!}\sum_{k=0}^n \mStirling{n}{k}_m C_{k \bmod m}(1) = \sum_{n=0}^\infty \frac{t^n}{n!}\sum_{r=0}^{m-1}\rho_r(n)\,C_r(1),
\]
where $\rho_r(n)$ are the residue-class row sums. Comparing coefficients and applying Corollary \ref{cor-mbell-rowsums}(i) gives the result. That $\mathcal{A} = C_0(e^x)$ solves the ODE follows from Propositions \ref{prop-general-ode} and \ref{prop-hyper-bessel}.
\end{proof}

\begin{remark}\label{rem-solution-space}
The solution space of $\mathcal{A}^{(m)} = e^{mx}\mathcal{A}$ is $m$-dimensional and is spanned by the e.g.f.s of the $m$ primitive sequences. For $m = 2$ we exhibited this basis explicitly through $I_0$ and $K_0$ (Theorem \ref{thm-m2-egf}), and the Wronskian identity let us cancel the carriers and certify integer coefficients (Theorem \ref{thm-bb-polynomials}). For $m \geq 3$, the point $t=0$ of \eqref{eq-general-ode-t} is a regular singular point whose indicial polynomial has the single root $0$ with multiplicity $m$; hence $C_0(t)$ is the unique solution analytic at the origin (up to scaling), and the remaining $m-1$ Frobenius solutions involve powers of $\log t$. By analogy with the Bessel case $m = 2$, where the corresponding logarithmic solution is $K_0$, we call these Frobenius solutions \emph{hyper-Bessel functions of the second kind}; the general theory of hyper-Bessel operators and functions is developed in \cite{kiryakova1994}. Deriving explicit weights, analogous to those of Theorem \ref{thm-m2-egf}, that combine these singular solutions into the individual primitive e.g.f.s is left as an open problem; note that by Corollary \ref{cor-mbell-rowsums} the integrality of all coefficients is already established without this analytic machinery.
\end{remark}

\begin{remark}
The primitive/composite decomposition has a classical constant-coefficient shadow: replacing the operator equation $\mathrm{BINOM}^m \circ\, a = \mathrm{L}^m \circ\, a$ by $a = \mathrm{L}^m \circ\, a$ (i.e.\ dropping the transform) yields the ODE $\mathcal{A}^{(m)} = \mathcal{A}$, whose primitive solutions are the $m$-sections of the exponential function studied by Tauber \cite{tauberGeneralizationsExponentialFunction1960}. The $m$-Bell theory developed here can thus be viewed as the ``binomial-transform deformation'' of the theory of exponential sections.
\end{remark}

\subsection{Duals, factorials, and combinatorics for general $m$}

All the structures of Sections \ref{sec-combinatorial} and \ref{sec-first-kind} generalize from $m = 2$ to arbitrary $m$, with the same proofs. We record the statements.

\begin{definition}\label{def-general-duals}
For $m \geq 1$ define:
\begin{itemize}
    \item[(i)] the unsigned \emph{$m$-Stirling numbers of the first kind} by $\mStirlingone{0}{k}_m = \delta_{0,k}$ and
    \[
    \mStirlingone{n+1}{k}_m = m\floor{n/m}\,\mStirlingone{n}{k}_m + \mStirlingone{n}{k-1}_m;
    \]
    \item[(ii)] the \emph{$m$-falling and $m$-rising factorials}
    \[
    x^{\uuline{n}_m} = \prod_{k=0}^{n-1}\left(x - m\floor{k/m}\right), \qquad
    x^{\ooline{n}_m} = \prod_{k=0}^{n-1}\left(x + m\floor{k/m}\right),
    \]
    in which each factor repeats $m$ times (except possibly the last); when $m$ is clear from context we drop the subscript, as we did for $m=2$;
    \item[(iii)] the \emph{$m$-Lah numbers} $\mLah{n}{k}_m$ by $x^{\ooline{n}_m} = \sum_k \mLah{n}{k}_m\, x^{\uuline{k}_m}$.
\end{itemize}
\end{definition}

\begin{theorem}\label{thm-general-duals}
For every $m \geq 1$ and all $0 \le k \le n$, with weights $w_j = m\floor{j/m}$:
\begin{itemize}
\item[(i)] $x^{\ooline{n}_m} = \sum_k \mStirlingone{n}{k}_m x^k$, \quad $x^{\uuline{n}_m} = \sum_k (-1)^{n-k}\mStirlingone{n}{k}_m x^k$, \quad $x^n = \sum_k \mStirling{n}{k}_m x^{\uuline{k}_m}$;
\item[(ii)] the signed first-kind and second-kind triangles are inverse matrices;
\item[(iii)] $\mStirlingone{n}{k}_m = e_{n-k}(w_0, \ldots, w_{n-1})$ and $\mStirling{n}{k}_m = h_{n-k}(w_1, \ldots, w_k)$, and the column generating function of the second kind is $\sum_n \mStirling{n}{k}_m x^n = x^k\big/\prod_{j=1}^{k}(1 - w_j x)$;
\item[(iv)] $\mLah{n}{k}_m = \sum_j \mStirlingone{n}{j}_m \mStirling{j}{k}_m$, with recurrence
\[
\mLah{n+1}{k}_m = \left(m\floor{n/m} + m\floor{k/m}\right)\mLah{n}{k}_m + \mLah{n}{k-1}_m;
\]
\item[(v)] the row sums of the first kind equal $\prod_{j=0}^{n-1}\left(m\floor{j/m} + 1\right)$;
\item[(vi)] $\mStirlingone{n}{k}_m$ counts the permutations of $[n]$ with $k$ cycles built by the restricted insertion process in which element $i+1$ either opens a new cycle or is inserted after one of the $m\floor{i/m}$ smallest existing elements.
\end{itemize}
\end{theorem}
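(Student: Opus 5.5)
The plan is to repeat the $m=2$ arguments of Section~\ref{sec-first-kind} essentially word for word, replacing $2\floor{\cdot/2}$ by $m\floor{\cdot/m}$ and writing $w_j = m\floor{j/m}$. Everything rests on the one-step relations $x^{\ooline{n+1}_m} = (x+w_n)\,x^{\ooline{n}_m}$, $x^{\uuline{k+1}_m} = (x-w_k)\,x^{\uuline{k}_m}$, and hence
\[
x\cdot x^{\uuline{k}_m} = x^{\uuline{k+1}_m} + w_k\, x^{\uuline{k}_m}.
\]
The last relation says that multiplication by $x$ is a weighted shift on the $m$-falling basis with weights $w_k$.

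For (i) I will induct on $n$, as in Theorem~\ref{thm-bessel-factorial-expansion}. Multiplying $x^{\ooline{n}_m} = \sum_k \mStirlingone{n}{k}_m x^k$ by $(x+w_n)$ reproduces the first-kind recurrence of Definition~\ref{def-general-duals}(i). The falling version follows from $x^{\uuline{n}_m} = (-1)^n(-x)^{\ooline{n}_m}$. For the third identity I will multiply $x^n = \sum_k \mStirling{n}{k}_m x^{\uuline{k}_m}$ by $x$ and apply the weighted-shift relation; this gives \eqref{eq-mstirling-recurrence}, exactly as in Theorem~\ref{thm-inverse-expansion}. Since there is one basis polynomial in each degree, the coefficients are unique.

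Part (ii) is then the composition of two mutually inverse changes of basis, as in Proposition~\ref{prop-orthogonality}.

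For (iii), the first-kind formula is the coefficient of $x^k$ in $\prod_{j<n}(x+w_j)$. For the column generating function I will induct on $k$. Put $A_k(x)=\sum_n \mStirling{n}{k}_m x^n$. The recurrence gives $A_k = w_k\,x A_k + x A_{k-1}$, so $A_k = xA_{k-1}/(1-w_k x)$, and $A_0=1$ because column $0$ is $\delta_{n,0}$ (here $w_0=0$). Expanding the product of geometric series then yields $h_{n-k}(w_1,\dots,w_k)$.

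For (iv), the matrix-product formula comes from composing the expansions in (i). The recurrence comes from multiplying $x^{\ooline{n}_m}=\sum_k \mLah{n}{k}_m x^{\uuline{k}_m}$ by $(x+w_n)$ and applying the weighted-shift relation, as in Theorem~\ref{thm-bessel-lah}.

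Part (v) follows by summing the first-kind recurrence over $k$, which gives $s_{n+1} = (w_n+1)s_n$ with $s_0 = 1$. Equivalently, it is (i) evaluated at $x=1$.

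For (vi) I will adapt Theorem~\ref{thm-insertion-model}. When element $i+1$ is inserted there is one way to open a new cycle and $w_i$ admissible positions, since insertion is allowed only after the $m\floor{i/m}$ smallest existing elements. Histories with $k$ new-cycle steps therefore satisfy the first-kind recurrence. This is also the step I expect to be the main obstacle. One must check that distinct histories give distinct permutations, and I will do so as for $m=2$: deleting the largest element from its cycle undoes the last step, so the final permutation determines the whole history. The statement only asserts a count of permutations produced by the process, so no further characterisation of the reachable permutations is needed.
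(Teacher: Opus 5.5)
Your proposal is correct and follows the paper's proof. The paper likewise proves this theorem by transcribing the $m=2$ arguments (factorial expansions, inverse expansion, orthogonality, symmetric functions, Lah recurrence, the row-sum recurrence, and the insertion model) with $2\floor{\cdot/2}$ replaced by $m\floor{\cdot/m}$ throughout. Your derivation of the column generating function directly from the recurrence, and your observation that injectivity of histories follows from the deletion map being a left inverse, match the details the paper relies on.
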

\begin{proof}
Identical, word for word, to the proofs of Theorems \ref{thm-bessel-factorial-expansion}, \ref{thm-inverse-expansion} and \ref{thm-bessel-lah}, Propositions \ref{prop-orthogonality}, \ref{prop-symmetric-functions} and \ref{prop-a000246} (first equality), and Theorem \ref{thm-insertion-model}, with $2\floor{\cdot/2}$ replaced by $m\floor{\cdot/m}$ throughout.
\end{proof}

\begin{theorem}[The $m$-compartment urn model]\label{thm-general-urn}
$\mStirling{n}{k}_m$ counts the arrangements of $n$ labeled marbles, placed one at a time, into $k$ compartments distributed across urns with $m$ unlabeled compartments each, under the constraints and indistinguishability conventions of Theorem \ref{thm-urn-model} (mutatis mutandis): within an urn, a compartment may receive a second marble only after every compartment of that urn is non-empty, and a new urn may be opened only when all previously used urns are completely non-empty. Consequently the composite $m$-Bell number $B^{(m)}_n$ counts all such arrangements of $n$ marbles, and the primitive $B^{(m,r)}_n$ counts those in which the number of used compartments is $\equiv r \pmod m$.
\end{theorem}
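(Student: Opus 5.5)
The plan is to mirror the induction in the proof of Theorem \ref{thm-urn-model}, with the modulus $2$ replaced by $m$. Let $T_m(n,k)$ denote the number of valid arrangements of marbles $1,\dots,n$ using exactly $k$ non-empty compartments. We will show that $T_m$ has the same initial data as $\mStirling{n}{k}_m$ and satisfies the recurrence \eqref{eq-mstirling-recurrence}.

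\textbf{Base cases.} We have $T_m(0,k)=\delta_{0,k}$: the empty arrangement is the only one, and it uses no compartments. We also have $T_m(n,k)=0$ for $k>n$, because every used compartment holds at least one marble.

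\textbf{Structural invariant.} We will first prove, by induction along the placement process, that after any valid sequence of placements using $k$ compartments the following holds. Writing $k=mq+r$ with $0\le r<m$, there are exactly $q$ completely filled urns, and, if $r>0$, one further urn with exactly $r$ non-empty compartments. This follows from the two constraints. A new urn can be opened only when every previously used urn is completely non-empty. Inside an urn, compartments are opened one at a time, and no compartment receives a second marble until all $m$ compartments of that urn are non-empty. The same invariant shows that at every step the move ``open a new compartment'' is a single well-defined option under the indistinguishability convention. If $r>0$, the constraints force the new compartment to lie in the partial urn. If $r=0$, it lies in a fresh urn, and all fresh urns and their compartments are interchangeable.

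\textbf{Counting the moves for marble $n+1$.} Given an arrangement of $n$ marbles in $k'$ compartments, there are two kinds of placement.
\begin{enumerate}
\item Marble $n+1$ opens a new compartment. This is exactly one option and requires $k'=k-1$, which contributes $T_m(n,k-1)$.
\item Marble $n+1$ goes into an already used compartment, so $k'=k$. By the invariant, the $r$ used compartments of the partial urn are forbidden, because that urn is not yet completely non-empty. Every compartment of the $q$ full urns is allowed. This gives $mq=m\floor{k/m}$ options and contributes $m\floor{k/m}\,T_m(n,k)$.
\end{enumerate}
Adding the two cases gives $T_m(n+1,k)=m\floor{k/m}\,T_m(n,k)+T_m(n,k-1)$, which together with the base cases yields $T_m=\mStirling{\cdot}{\cdot}_m$.

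\textbf{Counting distinct arrangements.} We still need that distinct placement histories give distinct final arrangements, so that we are counting arrangements and not merely histories. As in Theorem \ref{thm-urn-model}, the history can be recovered by deleting marbles $n,n-1,\dots$ in reverse order. This is where the indistinguishability convention needs care, and it is the main obstacle I expect. The fix is to order urns and compartments canonically by the smallest marble they contain (their first use). Then an arrangement determines its history uniquely, and two histories that differ at some step produce arrangements that differ in the compartment containing that marble.

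\textbf{Consequences for $m$-Bell numbers.} Summing $T_m(n,k)$ over all $k$ and applying Corollary \ref{cor-mbell-rowsums}(ii) shows that $B^{(m)}_n$ counts all arrangements of $n$ marbles. Summing only over $k\equiv r\pmod m$ and applying Corollary \ref{cor-mbell-rowsums}(i) identifies the count of arrangements whose number of used compartments is $\equiv r$ with $\rho_r(n)=B^{(m,r)}_n$.
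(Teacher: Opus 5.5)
Your proposal is correct and follows essentially the same route as the paper. Writing $k=mq+r$, the invariant of $q$ complete urns plus one urn with $r$ frozen compartments gives exactly $m\floor{k/m}$ reuse options and a unique ``open a new compartment'' move, hence the recurrence \eqref{eq-mstirling-recurrence}, and the Bell statements follow from Corollary \ref{cor-mbell-rowsums}. Your explicit injectivity argument via the canonical first-use ordering spells out the parenthetical history-recovery remark in the proof of Theorem \ref{thm-urn-model}, which the paper's general proof invokes only by reference.
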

\begin{proof}
As in Theorem \ref{thm-urn-model}: if $k = mj + r$ with $0 \le r < m$ compartments are in use, then $j$ urns are complete and one urn has $r$ used compartments; the marbles in the incomplete urn's used compartments are frozen, so exactly $mj = m\floor{k/m}$ old compartments can receive the next marble, while opening the next compartment (in the incomplete urn if $r > 0$, or in a fresh urn if $r = 0$) is the unique alternative. The final two claims follow from Corollary \ref{cor-mbell-rowsums}.
\end{proof}

The succession rule of Appendix C also generalizes directly, encoding the recurrence as a generating tree:
\begin{definition}[Succession rule for $m$-Stirling numbers of the second kind]\label{def-succession-general}
\begin{align*}
\Omega_m: \begin{cases}
(0) \\
(mj+r) \to (mj+r)^{mj}\,,\ (mj+r+1) \qquad \text{for } j \geq 0,\ 0 \leq r < m,
\end{cases}
\end{align*}
i.e.\ a node labeled $(k)$ produces $m\floor{k/m}$ children with its own label and one child labeled $(k+1)$. The number of nodes labeled $(k)$ at level $n$ equals $\mStirling{n}{k}_m$, and the total number of nodes at level $n$ equals $B^{(m)}_n$.
\end{definition}
This representation further reinforces the structural unity of the $m$-Stirling family and provides an efficient way to generate and analyze these numbers through tree-based methods.

\section{Moments of the Conway--Maxwell--Poisson distribution}\label{sec-cmp}

\noindent Perhaps the most intriguing consequence of the present theory is a probabilistic one. The classical Bell--Stirling framework is inseparable from the Poisson distribution: if $X \sim \mathrm{Poisson}(y)$, then the factorial moments are $\mathbb{E}\left[X^{\underline{n}}\right] = y^n$, the raw moments are the Touchard polynomials $\mathbb{E}[X^n] = T_n(y)$, and at $y = 1$ Dobi\'nski's formula \eqref{eq-bell-dobinski} states precisely that $\mathbb{E}[X^n] = B_n$. In this section we show how the package generalizes to the \emph{Conway--Maxwell--Poisson} (CMP) distribution with integer dispersion parameter $\nu = m$: the $m$-falling moments are powers of the rate times a fixed carrier ratio, the raw moments are carrier-weighted analogues of the Touchard values, and at $y = 1$ the scaled moments are exactly the carrier-weighted combinations of the \emph{primitive} $m$-Bell sequences furnished by the generalized Dobi\'nski formula. For $m = 1$ every carrier ratio equals $1$ and the relation collapses to the classical one; for $m \geq 2$ the carriers weight the residue classes unequally, so it is the primitive package, rather than the single composite number $B^{(m)}_n$, that the moments encode.

The CMP distribution \cite{conwayMaxwell1962, shmueliUsefulDistributionFitting2005} is a two-parameter generalization of the Poisson distribution with probability mass function
\[
\mathbb{P}(X = k) = \frac{\lambda^k}{(k!)^{\nu}\, Z(\lambda, \nu)}, \qquad k = 0, 1, 2, \ldots, \qquad Z(\lambda,\nu) = \sum_{j=0}^{\infty}\frac{\lambda^j}{(j!)^{\nu}},
\]
where either $\nu > 0$ and $\lambda > 0$, or $\nu = 0$ and $0 < \lambda < 1$ (for $\nu = 0$ the normalizing series is geometric and diverges when $\lambda \geq 1$) \cite{dalyGaunt2016}. The case $\nu=1$ is the Poisson distribution; $\nu > 1$ models under-dispersion and $\nu < 1$ over-dispersion, which has made the distribution a popular tool in applied count modeling \cite{shmueliUsefulDistributionFitting2005}. Our first observation is that at integer $\nu = m$ and with the parametrization $\lambda = (y/m)^m$, the normalizing constant is exactly the primary hyper-Bessel carrier of Proposition \ref{prop-hyper-bessel}:
\begin{equation}\label{eq-cmp-normalizer}
Z\!\left(\left(\tfrac{y}{m}\right)^{m},\, m\right) = \sum_{j=0}^\infty \frac{(y/m)^{mj}}{(j!)^m} = C^{(m)}_0(y).
\end{equation}
For $m = 2$ this is the familiar fact that $Z(\lambda, 2) = I_0(2\sqrt{\lambda})$ \cite{shmueliUsefulDistributionFitting2005, dalyGaunt2016}.

It is well known (and immediate) that the CMP distribution has clean \emph{powered} factorial moments:
\begin{lemma}\label{lem-cmp-factorial}
If $X \sim \mathrm{CMP}(\lambda, \nu)$ with $\nu > 0$, then $\mathbb{E}\left[\left(X^{\underline{j}}\right)^{\nu}\right] = \lambda^j$ for all $j \geq 0$.
\end{lemma}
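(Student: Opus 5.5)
The plan is to compute the expectation directly from the probability mass function and re-index the sum. This is the same manipulation that gives $\mathbb{E}[X^{\underline{j}}]=y^j$ for the Poisson distribution.

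First, observe that $X^{\underline{j}} = k(k-1)\cdots(k-j+1)$ vanishes for $0\le k<j$. For $k\ge j$ it equals $k!/(k-j)!$. Since $\nu>0$, the power $(X^{\underline{j}})^\nu$ is well defined: it is $0$ when $k<j$ (because $0^\nu=0$) and $(k!/(k-j)!)^\nu$ otherwise. For $j=0$ the convention $X^{\underline{0}}=1$ gives expectation $1=\lambda^0$ trivially.

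Next, for $j\ge 1$, write
\[
\mathbb{E}\left[\left(X^{\underline{j}}\right)^{\nu}\right]=\frac{1}{Z(\lambda,\nu)}\sum_{k=j}^{\infty}\frac{(k!)^{\nu}}{((k-j)!)^{\nu}}\cdot\frac{\lambda^{k}}{(k!)^{\nu}}=\frac{1}{Z(\lambda,\nu)}\sum_{k=j}^{\infty}\frac{\lambda^{k}}{((k-j)!)^{\nu}}.
\]
Substituting $i=k-j$ turns the last sum into $\lambda^j\sum_{i\ge0}\lambda^i/(i!)^\nu=\lambda^j Z(\lambda,\nu)$. Dividing by $Z(\lambda,\nu)$ gives $\lambda^j$.

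The only step needing care is convergence. The series for $Z(\lambda,\nu)$ converges for every $\lambda>0$ when $\nu>0$, for instance by the ratio test, since $\lambda/(i+1)^\nu\to0$. All terms are nonnegative, so the cancellation and re-indexing are legitimate and the expectation is finite. No result from earlier in the paper is needed.
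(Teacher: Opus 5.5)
Your proof is correct and is the same argument as the paper's: keep only the terms with $k\ge j$, cancel $(k!)^\nu$, shift to $i=k-j$, and recognize $\lambda^j Z(\lambda,\nu)$. Your remarks on the vanishing terms for $k<j$, the case $j=0$, and convergence spell out what the paper's one-line computation leaves implicit.
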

\begin{proof}
$\displaystyle \sum_{k\ge j}\left(\frac{k!}{(k-j)!}\right)^{\!\nu}\frac{\lambda^k}{(k!)^\nu Z} = \frac{\lambda^j}{Z}\sum_{i \ge 0}\frac{\lambda^{i}}{(i!)^\nu} = \lambda^j. $
\end{proof}

For $\nu = 1$, Lemma \ref{lem-cmp-factorial} is the Poisson factorial-moment identity $\mathbb{E}[X^{\underline{j}}] = \lambda^j$. The key discovery is that for $\nu = m$, the correct generalization of the falling factorial \emph{of the variable itself} (rather than a power of it) is precisely the $m$-falling factorial of Definition \ref{def-general-duals}, evaluated at $mX$---and its expectation picks up exactly one hyper-Bessel carrier ratio.

\begin{theorem}[$m$-falling moments of the CMP distribution]\label{thm-cmp-falling}
Let $m \geq 1$, $y > 0$, and $X \sim \mathrm{CMP}\!\left((y/m)^m, m\right)$. Then for all $n \geq 0$, with $r = n \bmod m$,
\[
\mathbb{E}\left[(mX)^{\uuline{n}_m}\right] = y^{n}\;\frac{C^{(m)}_{r}(y)}{C^{(m)}_0(y)}.
\]
In particular, for $m = 1$: $\mathbb{E}[X^{\underline{n}}] = y^n$ (Poisson); and for $m=2$:
\[
\mathbb{E}\left[(2X)^{\uuline{n}}\right] = \begin{cases} y^n & n \text{ even}, \\[2pt] y^n\, \dfrac{I_1(y)}{I_0(y)} & n \text{ odd}. \end{cases}
\]
\end{theorem}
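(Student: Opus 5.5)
We will compute the expectation directly from the mass function, using \eqref{eq-cmp-normalizer} to identify the normalizer with $C^{(m)}_0(y)$. Writing $\lambda=(y/m)^m$, we have
\[
\mathbb{E}\left[(mX)^{\uuline{n}_m}\right]=\frac{1}{C^{(m)}_0(y)}\sum_{k\ge0}(mk)^{\uuline{n}_m}\,\frac{(y/m)^{mk}}{(k!)^m},
\]
so it suffices to show that the numerator series equals $y^n C^{(m)}_r(y)$. The key step is an explicit evaluation of $(mk)^{\uuline{n}_m}$ at the lattice point $x=mk$. Write $n=mq+r$ with $0\le r<m$. By Definition~\ref{def-general-duals}, the factors of $x^{\uuline{n}_m}$ are $x-mi$, each repeated $m$ times for $i=0,\ldots,q-1$, followed by $r$ copies of $x-mq$. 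Therefore
\[
(mk)^{\uuline{n}_m}=\prod_{i=0}^{q-1}(mk-mi)^m\cdot(mk-mq)^r=m^{n}\left(\frac{k!}{(k-q)!}\right)^{\!m}(k-q)^r ,
\]
which vanishes for $k<q$. (For $k=q$ and $r=0$ we use the convention $0^0=1$; for $k=q$ and $r>0$ the term is genuinely zero.) This generalizes Lemma~\ref{lem-cmp-factorial}, which is the case $r=0$.

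Substituting this evaluation and shifting $k=q+i$, the sum becomes
\[
m^n\sum_{i\ge0}\frac{i^r\,(y/m)^{m(q+i)}}{(i!)^m}
= m^n (y/m)^{mq}\sum_{i\ge0}\frac{i^r (y/m)^{mi}}{(i!)^m}.
\]
For $r=0$ this equals $m^n (y/m)^{n}\,C^{(m)}_0(y)=y^nC^{(m)}_0(y)$, which gives the result directly.

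For $1\le r\le m-1$, the $i=0$ term drops out. We cancel $i^r$ against $r$ of the factorials, which gives $(i!)^m/i^r=((i-1)!)^{r}(i!)^{m-r}$. Setting $i=\ell+1$ then turns the sum into
\[
\sum_{\ell\ge0}\frac{(y/m)^{m\ell+m}}{(\ell!)^r((\ell+1)!)^{m-r}}=(y/m)^{r}\,C^{(m)}_r(y),
\]
by comparison with the series in Proposition~\ref{prop-hyper-bessel}, whose exponent is $m\ell+m-r$. Multiplying by $m^n(y/m)^{mq}$ gives $m^n(y/m)^{mq+r}C^{(m)}_r(y)=y^nC^{(m)}_r(y)$, as required. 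Interchanging the sums is harmless because every series involved converges absolutely.

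The main obstacle is the factorization of $(mk)^{\uuline{n}_m}$ together with the bookkeeping in the index shift: the residual $r$ factors must produce exactly the mixed factorial pattern $(\ell!)^r((\ell+1)!)^{m-r}$ defining $C^{(m)}_r$, and the power of $m$ must cancel cleanly. The specializations then follow immediately. For $m=1$ we always have $r=0$, giving the Poisson identity. For $m=2$ we use $(C_0,C_1)=(I_0,I_1)$ from Proposition~\ref{prop-hyper-bessel}.
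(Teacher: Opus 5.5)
Your proposal is correct and follows the paper's proof essentially step for step. It uses the same factorization $(mX)^{\uuline{n}_m}=m^{n}\bigl(X^{\underline{q}}\bigr)^{m}(X-q)^{r}$, the same reindexing $k=q+i$ that pulls out $\lambda^{q}$, and the same cancellation $i^{r}/(i!)^{m}=1/\bigl(((i-1)!)^{r}(i!)^{m-r}\bigr)$ with the shift $i=\ell+1$ to recognize $(y/m)^{r}C^{(m)}_{r}(y)$; the only cosmetic difference is that you evaluate the factorization at the lattice points $mk$ rather than stating it as a polynomial identity in $X$.
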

\begin{proof}
Write $n = qm + r$ with $0 \le r < m$. In the product $(mX)^{\uuline{n}_m} = \prod_{k=0}^{n-1}(mX - m\floor{k/m})$, the factor $(mX - mi)$ appears $m$ times for each $0 \le i \le q - 1$ and the factor $(mX - mq)$ appears $r$ times, so
\[
(mX)^{\uuline{n}_m} = m^{n}\left(X^{\underline{q}}\right)^{m}(X - q)^{r}.
\]
Taking expectations with $\lambda = (y/m)^m$ and $Z = C_0(y)$ (by \eqref{eq-cmp-normalizer}):
\[
\mathbb{E}\left[(mX)^{\uuline{n}_m}\right] = \frac{m^n}{C_0(y)}\sum_{k \geq q} \left(\frac{k!}{(k-q)!}\right)^{m} (k-q)^r\, \frac{\lambda^k}{(k!)^m}
= \frac{m^n \lambda^q}{C_0(y)}\sum_{i \geq 0} \frac{i^r\,\lambda^{i}}{(i!)^m}.
\]
For $r = 0$ the remaining sum is $Z = C_0(y)$ and the result is $m^n\lambda^q = y^n$. For $1 \le r \le m-1$, use $i^r/(i!)^m = 1/\big(((i-1)!)^r\,(i!)^{m-r}\big)$ for $i \geq 1$ and shift $i = k+1$:
\[
\sum_{i \geq 1} \frac{i^r \lambda^i}{(i!)^m} = \sum_{k \geq 0}\frac{(y/m)^{m(k+1)}}{(k!)^r\,((k+1)!)^{m-r}} = \left(\frac{y}{m}\right)^{r} C_r(y),
\]
by the series in Proposition \ref{prop-hyper-bessel}. Hence the expectation equals $m^n \lambda^q (y/m)^r C_r(y)/C_0(y) = y^{mq+r}\,C_r(y)/C_0(y) = y^n\, C_r(y)/C_0(y)$.
\end{proof}

\begin{theorem}[Raw moments of the CMP distribution]\label{thm-cmp-moments}
With the assumptions of Theorem \ref{thm-cmp-falling},
\[
m^{n}\, \mathbb{E}\left[X^{n}\right] = \frac{1}{C^{(m)}_0(y)} \sum_{k=0}^{n} \mStirling{n}{k}_m\; y^{k}\; C^{(m)}_{k \bmod m}(y) = \frac{V_n(y)}{C^{(m)}_0(y)},
\]
where $V_n$ is the carrier-Touchard polynomial of Theorem \ref{thm-general-carrier-expansion} for the hyper-Bessel system. For $m = 1$ this is the classical $\mathbb{E}[X^n] = T_n(y)$, the Touchard polynomial evaluated at the rate.
\end{theorem}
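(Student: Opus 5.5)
The plan is to expand $(mX)^n$ in the $m$-falling basis and then take expectations term by term using Theorem \ref{thm-cmp-falling}. By Theorem \ref{thm-general-duals}(i), the monomial $z^n$ expands in the $m$-falling factorial basis with second-kind coefficients:
\[
z^n = \sum_{k=0}^{n} \mStirling{n}{k}_m\, z^{\uuline{k}_m}.
\]
This is a polynomial identity, so we may substitute $z = mX$. That gives $m^n X^n = \sum_k \mStirling{n}{k}_m (mX)^{\uuline{k}_m}$ pointwise on the support of $X$.

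The second step is to take expectations. The sum is finite and every moment of a CMP variable with $\nu = m \ge 1$ is finite, because the weights $\lambda^k/(k!)^m$ decay superexponentially. Linearity therefore gives
\[
m^n\,\mathbb{E}[X^n] = \sum_{k=0}^n \mStirling{n}{k}_m\, \mathbb{E}\bigl[(mX)^{\uuline{k}_m}\bigr].
\]
Applying Theorem \ref{thm-cmp-falling} with $n$ replaced by $k$ turns each expectation into $y^k C^{(m)}_{k \bmod m}(y)/C^{(m)}_0(y)$. Pulling out the common denominator gives the first equality of the statement.

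The second equality needs no further argument. It is the definition of $V_n$ for the hyper-Bessel carrier system of Proposition \ref{prop-hyper-bessel}, as introduced after Theorem \ref{thm-general-carrier-expansion}.

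For the $m=1$ remark, all carriers equal $C^{(1)}_0 = e^y$, so the ratio is $1$. The $1$-falling factorial is the ordinary falling factorial and $\mStirling{n}{k}_1 = \Stirling{n}{k}$, so the formula reduces to $\mathbb{E}[X^n] = T_n(y)$.

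There is essentially no obstacle. The only points needing care are the integrability that justifies linearity, and checking that Theorem \ref{thm-general-duals}(i) gives the conversion in the correct direction, with unsigned second-kind numbers mapping the falling basis to monomials. That direction matches Theorem \ref{thm-inverse-expansion}, and the argument is exactly parallel to the classical derivation of Poisson moments from factorial moments.
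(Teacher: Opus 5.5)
Your proposal is correct and is essentially the paper's own (first) proof: substitute $mX$ into the basis conversion $z^n=\sum_k \mStirling{n}{k}_m z^{\uuline{k}_m}$ of Theorem~\ref{thm-general-duals}(i), take expectations, and insert Theorem~\ref{thm-cmp-falling}. The paper also records an equivalent route, writing $\mathbb{E}[(mX)^n]=[(xD)^nC^{(m)}_0](y)/C^{(m)}_0(y)$ and applying the general carrier expansion (Theorem~\ref{thm-general-carrier-expansion}), but your argument needs nothing beyond what you wrote.
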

\begin{proof}
Apply the basis conversion of Theorem \ref{thm-general-duals}(i) to the polynomial identity $(mX)^n = \sum_k \mStirling{n}{k}_m (mX)^{\uuline{k}_m}$, take expectations, and insert Theorem \ref{thm-cmp-falling}.
Alternatively and equivalently, $\mathbb{E}[(mX)^n] = \frac{1}{C_0(y)}\sum_k (mk)^n \lambda^k/(k!)^m = \frac{1}{C_0(y)}\left[(xD)^n C_0\right](y)$, and Theorem \ref{thm-general-carrier-expansion} expands the right-hand side.
\end{proof}

As a check, the case $n = 1$, $m = 2$ gives the known mean of the CMP distribution with $\nu = 2$ \cite{dalyGaunt2016}: $\mathbb{E}[X] = \tfrac{y}{2}\,I_1(y)/I_0(y) = \sqrt{\lambda}\; I_1(2\sqrt{\lambda})/I_0(2\sqrt{\lambda})$.

\begin{corollary}[Primitive $m$-Bell numbers as CMP moment coefficients]\label{cor-cmp-mbell}
Let $X \sim \mathrm{CMP}(m^{-m}, m)$, i.e.\ $y = 1$. Then
\[
m^{n}\,\mathbb{E}\left[X^{n}\right] = \sum_{r=0}^{m-1} B^{(m,r)}_n\; \frac{C^{(m)}_r(1)}{C^{(m)}_0(1)} = \frac{S_m(n)}{C^{(m)}_0(1)}.
\]
For $m=1$ ($X \sim \mathrm{Poisson}(1)$) this is $\mathbb{E}[X^n] = B_n$; for $m = 2$ ($X \sim \mathrm{CMP}(1/4, 2)$),
\[
2^{n}\, \mathbb{E}\left[X^{n}\right] = B^{(2,0)}_n + B^{(2,1)}_n\,\frac{I_1(1)}{I_0(1)} = \textup{A351143}(n) + \textup{A351028}(n)\cdot\frac{I_1(1)}{I_0(1)}.
\]
\end{corollary}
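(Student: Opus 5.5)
The plan is to specialize Theorem \ref{thm-cmp-moments} to $y=1$ and regroup the carrier-weighted sum by residue classes. With $y = 1$ we have $\lambda = m^{-m}$, so $X \sim \mathrm{CMP}(m^{-m}, m)$. Theorem \ref{thm-cmp-moments} then gives
\[
m^n\,\mathbb{E}[X^n] = \frac{1}{C^{(m)}_0(1)}\sum_{k=0}^{n}\mStirling{n}{k}_m\, C^{(m)}_{k \bmod m}(1).
\]
Split the sum over $k$ according to $r = k \bmod m$. The carrier value $C^{(m)}_r(1)$ factors out of each class, leaving the residue-class row sum $\rho_r(n)$ of Corollary \ref{cor-mbell-rowsums}. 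Part (i) of that corollary identifies $\rho_r = B^{(m,r)}$, and this yields the first equality.

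For the second equality, I would invoke the generalized Dobi\'nski formula, Theorem \ref{thm-general-dobinski}: $S_m(n) = \sum_r B^{(m,r)}_n C_r(1)$. Dividing by $C_0(1)$ matches the middle expression. As an alternative check, compute directly:
\[
m^n\mathbb{E}[X^n] = \frac{1}{C_0(1)}\sum_k \frac{(mk)^n\, m^{-mk}}{(k!)^m}.
\]
This sum is exactly $S_m(n)/C_0(1)$, since $(m^k k!)^m = m^{mk}(k!)^m$. That observation alone gives the outer equality independently of Theorem \ref{thm-cmp-moments}.

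The special cases are then immediate. For $m=1$, the normalizer is $C^{(1)}_0(1) = e$ and $B^{(1,0)} = B$, which recovers the Poisson moment identity. For $m=2$, use $(C^{(2)}_0, C^{(2)}_1) = (I_0, I_1)$ from Proposition \ref{prop-hyper-bessel}, together with the identifications A351143 $= B^{(2,0)}$ and A351028 $= B^{(2,1)}$.

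There is no real obstacle here. The only points needing care are the convention $0^0=1$ in $S_m(n)$ (the $k=0$ term when $n=0$) and checking that $(y/m)^m$ at $y=1$ is indeed $m^{-m}$.
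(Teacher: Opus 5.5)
Your proposal is correct and matches the paper's proof: set $y=1$ in Theorem \ref{thm-cmp-moments}, group the terms by $k \bmod m$ using Corollary \ref{cor-mbell-rowsums}(i), and identify the last expression through Theorem \ref{thm-general-dobinski}. Your extra direct check that $m^n\mathbb{E}[X^n]=S_m(n)/C^{(m)}_0(1)$ follows straight from the CMP mass function; it is the same as the second route given in the proof of Theorem \ref{thm-cmp-moments}.
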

\begin{proof}
Set $y = 1$ in Theorem \ref{thm-cmp-moments} and group the terms by $k \bmod m$, applying Corollary \ref{cor-mbell-rowsums}; the second expression restates Theorem \ref{thm-general-dobinski}.
\end{proof}

Thus the scaled moments of the $\mathrm{CMP}(m^{-m}, m)$ distribution are $\mathbb{Z}$-linear combinations of the fixed ``Bessel quotients'' $C_r(1)/C_0(1)$, and the integer coefficients are exactly the primitive $m$-Bell numbers. (For $m \geq 2$ the composite number $B^{(m)}_n = \sum_r B^{(m,r)}_n$ is \emph{not} itself a CMP moment---already $2\,\mathbb{E}[X] = I_1(1)/I_0(1) \approx 0.4464$ while $B^{(2)}_1 = 1$---it is the coefficient package that the moments carry.) In this precise sense, the Conway--Maxwell--Poisson distribution is the natural probabilistic home of the $m$-Bell numbers, just as the Poisson distribution is for the Bell numbers. It seems fitting that a distribution introduced for queuing systems with state-dependent service rates \cite{conwayMaxwell1962}, and revived for fitting under- and over-dispersed count data \cite{shmueliUsefulDistributionFitting2005}, turns out to encode a natural generalization of the partition-counting apparatus of enumerative combinatorics.

\section{Final remarks}\label{sec-final-remarks}

\noindent We conclude with remarks situating the $m$-Stirling framework within the literature, and a list of open problems. Appendix D presents a reference table cataloguing the parallels between the classical theory, the Bessel case $m = 2$, and general $m$.

\subsection{Relation to other generalizations of Stirling numbers}
Triangular arrays defined by two-term recurrences of the form $T(n+1,k) = f(n,k)T(n,k) + T(n,k-1)$ have been studied in great generality---as ``Galton arrays'' by Neuwirth \cite{neuwirth_recursively_2001}, through bivariate generating function PDEs by Barbero et al.\ \cite{barbero_g_bivariate_2014}, and via explicit-formula machinery by Spivey \cite{spiveySolutionsGeneralCombinatorial2011} and Lancaster, Mansour, Mulay and Shattuck \cite{mansour_general_2012}. The $m$-Stirling triangles fit these frameworks formally but sit outside their sharpest results: the coefficient $m\floor{k/m}$ is not of the affine form $\alpha + \beta n + \gamma k$ covered by the complete classification in \cite{barbero_g_bivariate_2014}, and the general explicit formula of \cite{mansour_general_2012} requires the column weights $b_k$ to be pairwise distinct, whereas ours occur with multiplicity $m$---this is exactly why the partial-fraction analysis in Appendix A involves higher-order poles and produces the harmonic-number corrections in Theorem \ref{thm-bs-formula}. The $m$-Stirling numbers are also distinct from Broder's $r$-Stirling numbers \cite{broder1984} (which restrict where the first $r$ elements sit) and from the Bessel numbers of Cheon et al.\ \cite{cheonGeneralizedBesselNumbers2013a} (arrays arising from the coefficients of Bessel polynomials); the appearance of Bessel functions in our $m = 2$ case is structural (the e.g.f.\ ODE), not definitional.

\subsection{The sequence A000246 and a missing bijection}
Proposition \ref{prop-a000246} shows that the row sums of the Bessel--Stirling triangle of the first kind count the permutations of odd order, while Theorem \ref{thm-insertion-model} exhibits the triangle itself as counting parity-admissible permutations by cycles. Since the two $9$-element sets for $n = 4$ already differ, the equinumerosity is genuinely non-trivial at the level of the statistic. An \emph{explicit} bijection between odd-order permutations and parity-admissible permutations---one translating a natural statistic on odd-order permutations into the cycle count of the admissible ones---would give a fully combinatorial account of Table \ref{tab-bessel-stirling1}.

\subsection{Open problems}
\begin{problem}
Find a direct bijective proof of the shift identity \eqref{eq-shift-identity} in the urn model of Theorem \ref{thm-general-urn}: the right-hand side suggests distinguishing, in a placement of $n+m$ marbles, the $j$ marbles that avoid the ``last'' urn, with the factor $m^{n-j}$ recording an $m$-fold choice for each of the remaining marbles. Such a proof would make the ``pairing effect'' heuristic of Section \ref{sec-combinatorial} precise.
\end{problem}
\begin{problem}
Determine explicit closed forms, analogous to Theorem \ref{thm-m2-egf}, for the e.g.f.s of the individual primitive $m$-Bell sequences when $m \geq 3$, in terms of hyper-Bessel functions of the second kind (Remark \ref{rem-solution-space}).
\end{problem}
\begin{problem}
The Bell numbers satisfy the Touchard congruence $B_{p+n} \equiv B_n + B_{n+1} \pmod p$ for every prime $p$. The $m$-Stirling triangles show visible $m$-adic structure (for instance $\mStirling{n}{m}_m = m^{\,n-m}$ for $n \geq m$, by the column generating function); a systematic study of congruence properties of the $m$-Bell sequences is open.
\end{problem}
\begin{problem}
The Hankel determinants of the Bell numbers are the superfactorials $\prod_{i=0}^{n} i!$. The Hankel determinants of A007472 begin $1, 0, -4, -48, 0, 442368, \ldots$, with zeros and negative entries, so A007472 is not the moment sequence of a positive measure---yet Corollary \ref{cor-cmp-mbell} exhibits it as the integer ``shadow'' of one. Explaining this Hankel structure (e.g.\ via continued fractions or orthogonal polynomials for the CMP weight) is open.
\end{problem}
\begin{problem}
Develop $q$-analogues: both the urn model (Theorem \ref{thm-general-urn}) and the file-placement interpretation (Proposition \ref{prop-symmetric-functions}) suggest natural inversion statistics, and the Garsia--Remmel $q$-rook framework \cite{garsiaRemmel1986} applies to the doubled staircase boards.
\end{problem}
\begin{problem}
Determine the asymptotics of $B^{(m)}_n$ for fixed $m \geq 2$, generalizing the Moser--Wyman asymptotics of the Bell numbers \cite{moserWyman1955}; the Dobi\'nski-type series of Theorem \ref{thm-general-dobinski} is the natural starting point for a saddle-point analysis, though it delivers the carrier-weighted combination $S_m(n) = \sum_r B^{(m,r)}_n C_r(1)$, from which the asymptotics of the individual primitives (and hence of $B^{(m)}_n$) must still be extracted.
\end{problem}

\printbibliography

\pagebreak

\section*{Appendix A: Explicit formula for the Bessel--Stirling numbers (proof of Theorem \ref{thm-bs-formula})}
Let $A_k$ be the ordinary generating function for the $k$-th column over $x^n$:
\[
A_k(x) = \sum_{n\ge0} \mStirling{n}{k}_2 x^n.
\]
We first multiply each side of the recurrence \eqref{eq-bs-recurrence} by $x^n$ and sum over $n$:
\[
\begin{aligned}
\sum_{n \geq 1} \mStirling{n}{k}_2x^n &= 2\floor{k/2}\sum_{n \geq 1}\mStirling{n-1}{k}_2x^n + \sum_{n \geq 1} \mStirling{n-1}{k-1}_2 x^n \\
A_k -\mStirling{0}{k}_2&=2\floor{k/2}x A_k+x A_{k-1}, \quad k \geq 1.
\end{aligned}
\]
Since $\mStirling{0}{k}_2 = 0$ when $k \neq 0$, this simplifies to the following recurrence for the column generating function:
\[
A_k = \frac{x}{(1-2\floor{k/2}x)} A_{k-1}.
\]
We know from the initial conditions that $A_0(x) = 1$, hence:
\[
\begin{aligned}
    A_0 &= 1 \\
    A_1 &= x \\
    A_2 &= \frac{x^2}{1-2x} \\
    A_3 &= \frac{x^3}{(1-2x)^2} \\
    A_4 &= \frac{x^4}{(1-2x)^2 (1-4x)} \\
    A_5 &= \frac{x^5}{(1-2x)^2 (1-4x)^2}, 
\end{aligned}
\]
and so on. More generally, the ordinary generating function for the $k$-th column is
\begin{equation}
A_k(x) = \frac{x^k}{\prod_{j=1}^k(1-2\floor{j/2}x)},
\end{equation}
which is Equation \eqref{eq-column-ogf} of the main text. In order to get an explicit formula for $\mStirling{n}{k}_2$, use the technique on p.~19 of \cite{wilfGeneratingfunctionologyThirdEdition2005}:
\[
\mStirling{n}{k}_2 = [x^n]A_k(x),
\]
where $[x^n]$ is the coefficient extraction operator. Start with a partial fraction decomposition of $A_k(x)$. First, denote
\[
A_k(x) = x^kP_k(x),
\]
where
\[
P_k(x) = \prod_{1 \leq j \leq k}\frac{1}{1-2\floor{j/2}x}.
\]
Hence
\begin{equation}\label{znk-coef}
\mStirling{n}{k}_2 = [x^n]x^kP_k(x) = [x^{n-k}]P_k(x).
\end{equation}
We can express $P_k(x)$ as a sum of partial fractions with unknown coefficients, writing $k = 2\mu$ or $k = 2\mu + 1$ according to parity (the letter $m$ is reserved for the paper's global modulus, equal to $2$ throughout this appendix):

\begin{equation}\label{pkx}
    \begin{aligned}
    P_{2\mu}(x) &= \sum_{1 \leq j \leq \mu} \frac{c_{2\mu,j}}{1-2jx} + \sum_{1 \leq j \leq \mu-1} \frac{d_{2\mu,j}}{(1-2jx)^2} \\
    P_{2\mu+1}(x) &= \sum_{1 \leq j \leq \mu} \frac{c_{2\mu+1,j}}{1-2jx} + \sum_{1 \leq j \leq \mu} \frac{d_{2\mu+1,j}}{(1-2jx)^2}
    \end{aligned}
\end{equation}

It is easy to calculate the $d$ coefficients with the standard Heaviside cover-up method; note that the factor $(1-2rx)^2$ cancels the two factors of $P_k$ with $j \in \{2r, 2r+1\}$, both of which have $\floor{j/2} = r$:
\[
\begin{aligned}
    d_{k,r} &= (1-2rx)^2P_k(x)\big|_{x=\frac{1}{2r}} \\
    &= (1-2rx)^2 \prod_{1 \leq j \leq k} \frac{1}{1-2 \floor{j/2}x} \bigg|_{x=\frac{1}{2r}} \\
    &= \prod_{1 \leq j < 2r} \frac{1}{1-\floor{j/2}/r} \prod_{2r+1 < j \leq k} \frac{1}{1- \floor{j/2}/r} \\
    &= \prod_{1 \leq j < 2r} \frac{r}{r-\floor{j/2}} \prod_{2r+1 < j \leq k} \frac{r}{r- \floor{j/2}},
\end{aligned}
\]
which simplifies to
\begin{equation}\label{d-k-r}
        d_{k,r}=\frac{(-r)^{k-1}}{r!\,r!\,(\floor{\frac{k}{2}}-r)!\,(\floor{\frac{k-1}{2}}-r)!}.
\end{equation}
$d_{k,r}$ varies only slightly for even and odd $k$:
\[
\begin{aligned}
    d_{2\mu,r} &= \frac{-r^{2\mu-1}}{r!\,r!\,(\mu-r)!\,(\mu-1-r)!} \\
    d_{2\mu+1,r} &= \frac{r^{2\mu}}{r!\,r!\,(\mu-r)!\,(\mu-r)!}
\end{aligned}
\]
For the $c_{k,j}$ coefficients, we have two main cases to consider. When $k=2\mu$ is even and $j=\mu$, $P_k(x)$ has a simple pole at $x = \frac{1}{2\mu}$, so we get $c_{2\mu,\mu}$ easily by the same method:
\begin{equation}\label{c-2m-m}
    c_{2\mu,\mu} = (1-2\mu x)P_{2\mu}(x)\big|_{x=\frac{1}{2\mu}} = \frac{\mu^{2\mu}}{\mu!\,\mu!}.
\end{equation}
For the remaining $c$ coefficients we have a pole of $P_k(x)$ of multiplicity 2 at $x = \frac{1}{2j}$, so we need to use the derivative method. Specifically:
\[
-2r\,c_{k,r} = \frac{\mathrm{d}}{\mathrm{d}x} (1-2rx)^2P_k(x)\bigg|_{x=\frac{1}{2r}}
\]
Let
\[
\begin{aligned}
Q_r &= (1-2rx)^2 P_k(x) = \prod_{j} \frac{1}{(1-2jx)^{e_j}},\\
e_j &=
    \begin{cases}
    2,& \mathrm{if}\quad 1 \leq 2j < k \land  j \neq r \\
    1, & \mathrm{if} \quad 2j = k \land j \neq r \\
    0, & \mathrm{otherwise}
    \end{cases}
\end{aligned}
\]
Then $c_{k,r} = -\frac{1}{2r}Q'_r\left (\frac{1}{2r} \right)$, and we can use the logarithmic derivative to simplify the calculation:

\[
\begin{aligned}
    \frac{Q'_{r}}{Q_r} &= \frac{\mathrm{d}}{\mathrm{d}x}\log(Q_r) = \frac{\mathrm{d}}{\mathrm{d}x}\left[ -\sum_{j} e_j\log(1-2jx) \right]=  \sum_{j} 2e_j\frac{j}{1-2jx}  \\
    c_{k,r} &= - \frac{1}{2r}Q'_r\left (\frac{1}{2r} \right) = -\frac{1}{2r} Q_r\left(\frac{1}{2r}\right) 2\sum_{j} e_j\frac{j}{1-j/r} \\
    &= -\frac{1}{2r} 2 d_{k,r}\sum_{j}e_j\frac{rj }{r-j} = - d_{k,r}\sum_{j}e_j \frac{j}{r-j} 
\end{aligned}
\]
Let $s_{k,r}=\sum_{j}e_j\frac{j}{r-j}$ and simplify. First, consider the case $k=2\mu+1$:
\[
\begin{aligned}
    s_{2\mu+1,r} &=  2\sum_{\substack{1 \leq j \leq \mu \\ j \neq r}}\frac{j}{r-j} \\
    &= 2\left[\frac{1}{r-1}+\frac{2}{r-2} + \cdots  + \frac{r-1}{1}-\frac{r+1}{1}-\frac{r+2}{2}-\cdots-\frac{\mu}{\mu-r}\right]\\
    &= 2\sum_{1 \leq j \leq r-1}\frac{r-j}{j} -2\sum_{1 \leq j \leq \mu-r}\frac{j+r}{j} \\
    &= 2r\left(\sum_{1 \leq j \leq r-1}\frac{1}{j}\right) - 2(r-1) - 2r\left(\sum_{1 \leq j \leq \mu-r}\frac{1}{j}\right)-2(\mu-r)\\
    &= 2r(H_{r-1}-H_{\mu-r})-2\mu+2,
\end{aligned}
\]
where $H_n$ is the $n$-th harmonic number. We proceed similarly for $k=2\mu$, with some care about $e_{\mu}=1$:
\[
\begin{aligned}
    s_{2\mu,r} &= \sum_{j}e_j \frac{j}{r-j} = \frac{\mu}{r-\mu} + 2\sum_{\substack{1 \leq j \leq \mu-1 \\ j \neq r}}\frac{j}{r-j}\\
    &= \frac{\mu}{r-\mu} + 2 \sum_{1 \leq j \leq r-1} \frac{j}{r-j} - 2\sum_{r+1 \leq j \leq \mu-1}\frac{j}{j-r} \\
    &= \frac{\mu}{r-\mu} + 2\sum_{1 \leq j \leq r-1}\frac{r-j}{j} -2\sum_{1 \leq j \leq \mu-r-1}\frac{j+r}{j} \\
    &= \frac{\mu}{r-\mu} + 2rH_{r-1}-2(r-1)-2rH_{\mu-r-1}-2(\mu-r-1) \\
    &=  2rH_{r-1} - 2rH_{\mu-r-1}-2(\mu-2)-\frac{\mu}{\mu-r}  \\
    &= 2rH_{r-1} - 2rH_{\mu-r-1} - 2(\mu-2) - \frac{r}{\mu-r} -1 \\
    &= r\left(2H_{r-1} - H_{\mu-r-1}-H_{\mu-r}\right) - 2\mu + 3.
\end{aligned}
\]
Generally, for $1 \leq r < k/2$ (this covers all the double poles; note that for odd $k = 2\mu+1$ it includes $r = \mu$):
\begin{equation}\label{s-k}
    s_{k,r} = r(2H_{r-1}-H_{\floor{k/2}-r}-H_{\floor{(k-1)/2}-r})-k+3.
\end{equation}
Combining everything we have:
\begin{equation}\label{d-s-c}
    \begin{aligned}
        d_{k,r} &= \frac{(-r)^{k-1}}{r!\,r!\,(\floor{k/2}-r)!\,(\floor{(k-1)/2}-r)!} \\
        s_{k,r} &= r(2H_{r-1}-H_{\floor{k/2}-r}-H_{\floor{(k-1)/2}-r})-k+3 \\
        c_{k,r} &=
        \begin{cases}
            - d_{k,r}\, s_{k,r} & \text{if}\, 1 \leq r < k/2\\
            \frac{r^{2r}}{r!\,r!} & \text{if}\, r = k/2\\
            0 & \text{otherwise}
        \end{cases}
    \end{aligned}
\end{equation}
Next, let us find an explicit formula for $\mStirling{n}{k}_2$; assume first that $n \geq k$. From \eqref{znk-coef} and \eqref{pkx}:
\[
\begin{aligned}
    \mStirling{n}{k}_2 &=[x^{n-k}]P_k(x) \\
    &= [x^{n-k}]\left[ \sum_{1 \leq j \leq \floor{k/2}} \frac{c_{k,j}}{1-2jx} + \sum_{1 \leq j \leq \floor{(k-1)/2}} \frac{d_{k,j}}{(1-2jx)^2} \right]\\
     &= \sum_{1 \leq j \leq \floor{k/2}}c_{k,j}[x^{n-k}]  \frac{1}{1-2jx} + \sum_{1 \leq j \leq \floor{(k-1)/2}} d_{k,j}[x^{n-k}]\frac{1}{(1-2jx)^2} 
\end{aligned}
\]
We have the standard geometric series expansions (the extraction of the power $x^{n-k}$ is legitimate because $n - k \geq 0$; the range $n < k$ is treated separately below):
\[
\begin{aligned}
     & [x^{n-k}]\frac{1}{1-2jx} = [x^{n-k}]\sum_{i\geq0}(2j)^ix^i = (2j)^{n-k}\\
     & [x^{n-k}]\frac{1}{(1-2jx)^2} = [x^{n-k}]\sum_{i\geq0}(i+1)(2j)^ix^i = (n-k+1)(2j)^{n-k}.
\end{aligned}
\]
This yields the finite formula of Theorem \ref{thm-bs-formula}: abbreviating
\begin{equation}\label{z-d-c}
        F_k(n) := \sum_{1 \leq j \leq \floor{\frac{k}{2}}}(2j)^{n-k}\, c_{k,j} \;+\; (n-k+1)\sum_{1 \leq j \leq \floor{\frac{k-1}{2}}}(2j)^{n-k}\, d_{k,j},
\end{equation}
with $d_{k,r}$ and $c_{k,r}$ as in \eqref{d-s-c}, we have shown that $\mStirling{n}{k}_2 = F_k(n)$ for $n \geq k \geq 2$. The two sums have the same range when $k$ is odd, but for even $k$ the endpoint $j = k/2$ is a \emph{simple} pole of $P_k$: it contributes a $c$-term only, and the $d$-sum must stop at $\floor{(k-1)/2}$ (indeed \eqref{d-k-r} would involve $(-1)!$ at $r = k/2$).

\subsection*{The range $2 \leq n < k$}
The coefficient extraction above requires $n \geq k$, while Theorem \ref{thm-bs-formula} asserts the formula for all $n, k \geq 2$; we must therefore show that $F_k(n) = 0$ for $2 \leq n < k$, matching the triangular zeros of the array. Expand $P_k$ as a Laurent series at infinity: for $|x| > \tfrac{1}{2}$,
\[
\frac{1}{1-2jx} = -\sum_{s \geq 1}(2j)^{-s}x^{-s}, \qquad \frac{1}{(1-2jx)^2} = \sum_{s \geq 2}(s-1)(2j)^{-s}x^{-s},
\]
so the partial-fraction decomposition \eqref{pkx} gives
\[
\begin{aligned}
P_k(x) &= \sum_{s \geq 1}\Bigg[-\sum_{1 \leq j \leq \floor{\frac{k}{2}}}c_{k,j}(2j)^{-s} + (s-1)\sum_{1 \leq j \leq \floor{\frac{k-1}{2}}}d_{k,j}(2j)^{-s}\Bigg]x^{-s} \\
&= -\sum_{s \geq 1} F_k(k-s)\, x^{-s},
\end{aligned}
\]
where the second equality substitutes $n = k - s$ in \eqref{z-d-c} and uses $s - 1 = -(n-k+1)$. On the other hand, $P_k$ is the reciprocal of a polynomial of degree $k - 1$ (its $k-1$ nontrivial factors $1 - 2\floor{j/2}x$, $2 \leq j \leq k$, each have degree one), so $P_k(x) = O(|x|^{-(k-1)})$ as $|x| \to \infty$, and the coefficients of $x^{-1}, \ldots, x^{-(k-2)}$ must all vanish. Since $s$ running through $1, \ldots, k-2$ corresponds to $n = k-s$ running through $k-1, \ldots, 2$, this proves
\[
F_k(n) = 0 \qquad \text{for } 2 \leq n \leq k-1,
\]
completing the proof of the finite formula of Theorem \ref{thm-bs-formula} for all $n, k \geq 2$.

Carrying the same expansion one order further produces two boundary constants needed below. Write
\[
A = \floor{k/2}, \qquad B = \floor{(k-1)/2}, \qquad\text{so that } A + B = k - 1 \ \ (k \geq 1).
\]
The multiset of weights $\{\floor{j/2} : 2 \leq j \leq k\}$ consists of each of $1, \ldots, B$ twice, together with $A$ once more when $k$ is even; hence $\prod_{j=2}^{k}\floor{j/2} = A!\,B!$ and $\sum_{j=2}^{k}\floor{j/2}^{-1} = H_A + H_B$. Writing each factor of $P_k$ as $\frac{1}{1-2wx} = \frac{-1}{2wx}\big(1 + \frac{1}{2wx} + O(x^{-2})\big)$ and multiplying over the $k-1$ weights,
\[
P_k(x) = \frac{(-1)^{k-1}}{2^{k-1}\,A!\,B!}\;x^{-(k-1)}\left(1 + \frac{H_A + H_B}{2}\,x^{-1} + O(x^{-2})\right),
\]
and comparing with $P_k(x) = -\sum_{s} F_k(k-s)\,x^{-s}$ at $s = k-1$ and $s = k$:
\begin{equation}\label{eq-laurent-boundary}
F_k(1) = \frac{(-1)^{k}}{2^{k-1}\,A!\,B!}, \qquad F_k(0) = \frac{(-1)^{k}\,(H_A + H_B)}{2^{k}\,A!\,B!} \qquad (k \geq 2).
\end{equation}

\subsection*{The unified limit representation}
The piecewise definition of $c_{k,r}$ in \eqref{d-s-c} is forced by the singularities of the harmonic numbers at negative integer indices, which occur when $r \geq k/2$ or $r \leq 0$. The singularities can be tamed by replacing harmonic numbers via the identity $H_n = \psi(n+1)+\gamma$, where $\gamma$ is the Euler--Mascheroni constant and $\psi$ the digamma function, and factorials by gamma functions: both $\psi$ and $\Gamma$ have simple poles at the non-positive integers, so appropriate ratios have finite limits there. The only expansions we shall need are the classical ones at the origin,
\[
\Gamma(z) = \frac{1}{z} - \gamma + O(z), \qquad \psi(z) = -\frac{1}{z} - \gamma + O(z) \qquad (z \to 0),
\]
together with the elementary values $\psi(j) = H_{j-1} - \gamma$ and $\Gamma(j+1) = j!$ at positive integers $j$.

With $A = \floor{k/2}$ and $B = \floor{(k-1)/2}$ as above, define for integers $n, k \geq 0$ and $0 \leq j \leq A$
\[
\begin{aligned}
T_{n,k}(j) &= \lim_{t \to j} \left[t^{n-1}\,
        \frac
            {n-2-t\,\Psi_k(t)}
            {\Gamma(t+1)^{2}\, \Gamma\left(A-t+1\right) \Gamma\left(B-t+1\right)}\right],
        \\
        \Psi_k(t) &= 2\psi(t)-\psi\left(A-t+1\right)-\psi\left(B-t+1\right).
\end{aligned}
\]
The limit representation of Theorem \ref{thm-bs-formula} asserts that
\begin{equation}\label{eq-limit-claim}
\mStirling{n}{k}_2 = (-1)^{k-1}\, 2^{n-k} \sum_{j=0}^{A} T_{n,k}(j) \qquad \text{for all integers } n, k \geq 0.
\end{equation}
We now evaluate every limit; all four cases are elementary.

\emph{(i) Interior points $1 \leq j < k/2$.} All gamma and digamma arguments are positive integers, so no limiting process is needed:
\[
T_{n,k}(j) = j^{n-1}\,\frac{n - 2 - j\big(2H_{j-1} - H_{A-j} - H_{B-j}\big)}{(j!)^2\,(A-j)!\,(B-j)!},
\]
the three Euler constants in $\Psi_k(j)$ cancelling. Since $d_{k,j} = (-1)^{k-1}j^{k-1}\big/\big((j!)^2\,(A-j)!\,(B-j)!\big)$ by \eqref{d-k-r}, and $n - 2 - j\,\Psi_k(j) = (n-k+1) - s_{k,j}$ by \eqref{s-k},
\[
(-1)^{k-1}2^{n-k}\,T_{n,k}(j) = (2j)^{n-k}\,d_{k,j}\,\big(n-k+1-s_{k,j}\big) = (2j)^{n-k}\big(c_{k,j} + d_{k,j}(n-k+1)\big):
\]
the interior limit terms reproduce exactly the corresponding terms of \eqref{z-d-c}---and they do so for \emph{every} integer $n \geq 0$, the computation being a polynomial identity in $n$.

\emph{(ii) The endpoint $j = A = k/2$ for even $k$.} Set $t = A - \epsilon$ with $\epsilon \to 0$. Since $B = A - 1$, we have $\Gamma(B-t+1) = \Gamma(\epsilon) = \epsilon^{-1}\big(1 + O(\epsilon)\big)$ and $t\,\psi(B-t+1) = t\,\psi(\epsilon) = -t/\epsilon + O(1)$, while every other factor stays regular. Hence
\[
T_{n,k}(A) = \lim_{\epsilon \to 0}\, A^{n-1}\,\frac{-A/\epsilon + O(1)}{(A!)^2\,\Gamma(1+\epsilon)\cdot\epsilon^{-1}\big(1+O(\epsilon)\big)} = -\frac{A^{n}}{(A!)^2}.
\]
Multiplying by the prefactor $(-1)^{k-1}2^{n-k} = -2^{n-k}$ gives $2^{n-k}A^{n}/(A!)^2 = (2A)^{n-k}\,c_{k,A}$ by \eqref{c-2m-m} (using $2A = k$): the endpoint limit reproduces the $c$-term of the simple pole and correctly contributes no $d$-term---again for every integer $n \geq 0$.

\emph{(iii) The point $j = 0$, for $k \geq 1$.} As $t \to 0$ we have $2t\,\psi(t) = -2 - 2\gamma t + O(t^2)$ and $\psi(A-t+1) + \psi(B-t+1) = H_A + H_B - 2\gamma + O(t)$, so the numerator satisfies
\[
n - 2 - t\,\Psi_k(t) = n + (H_A + H_B)\,t + O(t^2),
\]
while the denominator tends to $\Gamma(1)^2\,\Gamma(A+1)\,\Gamma(B+1) = A!\,B! \neq 0$. Therefore
\begin{equation}\label{eq-j0-limits}
T_{n,k}(0) = \lim_{t \to 0}\, t^{n-1}\,\frac{n + (H_A+H_B)\,t + O(t^2)}{A!\,B!\,\big(1+O(t)\big)} =
\begin{cases}
\dfrac{H_A + H_B}{A!\,B!} & n = 0,\\[8pt]
\dfrac{1}{A!\,B!} & n = 1,\\[6pt]
0 & n \geq 2.
\end{cases}
\end{equation}

\emph{(iv) The column $k = 0$.} Here $A = 0$ and $B = -1$, the sum in \eqref{eq-limit-claim} reduces to the single term $j = 0$, and case (iii) does not apply because $\Gamma(B-t+1) = \Gamma(-t)$ is itself singular. Expanding, $\Gamma(-t) = -t^{-1}\big(1 + \gamma t + O(t^2)\big)$ and $t\,\psi(-t) = 1 - \gamma t + O(t^2)$, so the numerator is $n - 2 - t\,\Psi_0(t) = (n - 2) + 2 + 1 + O(t^2) = n + 1 + O(t^2)$ while the denominator is $-t^{-1}\big(1 + O(t)\big)$. Hence $T_{n,0}(0) = \lim_{t\to0}\big[{-(n+1)}\,t^{n}\,\big(1+O(t)\big)\big] = -\delta_{n,0}$, and with the prefactor $(-1)^{0-1}\,2^{n-0} = -2^{n}$ the right-hand side of \eqref{eq-limit-claim} equals $\delta_{n,0} = \mStirling{n}{0}_2$, as required.

It remains to assemble the cases. For $k = 1$ (where $A = B = 0$ and only $j = 0$ occurs), \eqref{eq-j0-limits} and the prefactor $2^{n-1}$ give the values $0, 1, 0, 0, \ldots$ for $n = 0, 1, 2, \ldots$, i.e.\ $\delta_{n,1} = \mStirling{n}{1}_2$. For $k \geq 2$, cases (i) and (ii) show that
\[
(-1)^{k-1}\, 2^{n-k} \sum_{j=0}^{A} T_{n,k}(j) = (-1)^{k-1}\,2^{n-k}\,T_{n,k}(0) + F_k(n) \qquad (n \geq 0),
\]
and three regimes finish the proof:
\begin{itemize}
    \renewcommand\labelitemi{--}
    \item $n \geq 2$: the $j = 0$ term vanishes by \eqref{eq-j0-limits}, and $F_k(n) = \mStirling{n}{k}_2$---for $n \geq k$ by the coefficient extraction, and for $2 \leq n < k$ because both sides are $0$ by the Laurent computation above.
    \item $n = 1$: by \eqref{eq-j0-limits} and \eqref{eq-laurent-boundary} the two contributions are $(-1)^{k-1}2^{1-k}/(A!\,B!)$ and $(-1)^{k}2^{1-k}/(A!\,B!)$; they cancel, giving $0 = \mStirling{1}{k}_2$.
    \item $n = 0$: the two contributions are those of the previous case, each multiplied by $(H_A + H_B)/2$; they cancel again, giving $0 = \mStirling{0}{k}_2$.
\end{itemize}
This proves \eqref{eq-limit-claim} in full: the limit representation is valid for all integers $n, k \geq 0$, with the $j = 0$ term supplying precisely the corrections needed at $n \in \{0, 1\}$ and the Laurent identities \eqref{eq-laurent-boundary} enforcing the triangular zeros. \hfill $\square$

\newpage
\section*{Appendix B: Bessel--Stirling number identities}
\noindent In all cases below $n, k \geq 0$ and $j \geq 1$ are integers, $b = 2\floor{k/2}$ is used to simplify the notation (with the convention $b^0 = 1$ even when $b = 0$), and empty sums are $0$.
\begin{lemma}[Distant hockey-stick-like identity]\label{lemma-dist-hockey}
\[
    \mStirling{n+j}{k}_2 = b^j \mStirling{n}{k}_2 +\sum_{i=0}^{j-1}b^{j-1-i}\mStirling{n+i}{k-1}_2
\]
\end{lemma}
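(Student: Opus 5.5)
The plan is to iterate the defining recurrence \eqref{eq-bs-recurrence} in its first index, with the column $k$ held fixed. For fixed $k$ the coefficient $b = 2\floor{k/2}$ does not change as $n$ varies, so the recurrence reads
\[
\mStirling{N+1}{k}_2 = b\,\mStirling{N}{k}_2 + \mStirling{N}{k-1}_2 \qquad (N \geq 0).
\]
This is a first-order linear inhomogeneous recurrence in $N$ with constant multiplier $b$ and forcing term $\mStirling{N}{k-1}_2$. The claimed identity is its solution unrolled $j$ times from the starting index $N = n$.

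I will prove it by induction on $j \geq 1$, with $n$ and $k$ arbitrary.

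For the base case $j = 1$, the sum has the single term $i = 0$, whose weight is $b^{0} = 1$. The statement is then exactly the recurrence with $N = n$. The convention $b^0 = 1$ is needed here when $k \in \{0,1\}$, because then $b = 0$.

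For the inductive step, assume the identity holds for $j$. Apply the recurrence at $N = n + j$:
\[
\mStirling{n+j+1}{k}_2 = b\,\mStirling{n+j}{k}_2 + \mStirling{n+j}{k-1}_2 .
\]
Substitute the hypothesis into the first term. This gives
\[
b^{j+1}\mStirling{n}{k}_2 + \sum_{i=0}^{j-1} b^{j-i}\mStirling{n+i}{k-1}_2 + \mStirling{n+j}{k-1}_2 .
\]
The final term is the missing $i = j$ summand $b^{(j+1)-1-j}\mStirling{n+j}{k-1}_2$, so the sum now runs over $0 \leq i \leq j$, which is the identity at $j+1$.

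There is no real obstacle; the only point needing care is the boundary cases.
\begin{itemize}
\item When $k = 0$, the convention $\mStirling{n}{-1}_2 = 0$ applies. Then $b = 0$, and the identity reduces to $\mStirling{n+j}{0}_2 = 0$ for $j \geq 1$, which holds.
\item When $b = 0$ in general, only the $i = j-1$ term survives because $0^0 = 1$. This is consistent with the recurrence, which then becomes a pure shift.
\end{itemize}
The convention $b^0 = 1$ stated in the appendix keeps the induction valid verbatim in all these cases.
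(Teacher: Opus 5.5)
Your proof is correct and matches the paper's argument: induction on $j$, with the basic recurrence \eqref{eq-bs-recurrence} as the base case, and an inductive step that applies the recurrence at index $n+j$, substitutes the hypothesis, and absorbs $\mStirling{n+j}{k-1}_2$ as the $i=j$ summand. Your remarks on the boundary cases $b=0$ and $k=0$ are accurate and a bit more explicit than the paper, which states the convention $b^0=1$ but does not discuss it.
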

\begin{proof}
The base case for $j=1$ is the basic recurrence \eqref{eq-bs-recurrence}:
\[
    \mStirling{n+1}{k}_2 = b \,\mStirling{n}{k}_2 + \mStirling{n}{k-1}_2.
\]
Assume the identity holds for \textit{j} (induction hypothesis). Then from the basic recurrence we have
\[
    \mStirling{n+j+1}{k}_2 = b \, \mStirling{n+j}{k}_2 +\mStirling{n+j}{k-1}_2.
\]
From the induction hypothesis:
\[
    \begin{aligned}
        \mStirling{n+j+1}{k}_2 &= b \, \bigg(b^j \mStirling{n}{k}_2 +\sum_{i=0}^{j-1}b^{j-1-i}\mStirling{n+i}{k-1}_2\bigg) +\mStirling{n+j}{k-1}_2 \\
                   &= b^{j+1}\mStirling{n}{k}_2+\sum_{i=0}^jb^{j-i}\, \mStirling{n+i}{k-1}_2,
    \end{aligned}
\]
which completes the proof.
\end{proof}
\begin{proposition}[Hockey-stick-like identity]
For $k \geq 1$,
    \[
        \mStirling{n}{k}_2 = \sum_{j=k}^{n}b^{n-j} \mStirling{j-1}{k-1}_2.
    \]
\end{proposition}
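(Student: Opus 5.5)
The plan is to specialize the distant hockey-stick identity (Lemma \ref{lemma-dist-hockey}) by sliding the starting row all the way down to row $0$. Fix $k \geq 1$ and $n \geq 0$, and apply the lemma with $n$ replaced by $0$ and $j = n$. This is legitimate for $n \geq 1$, since the lemma requires $j \geq 1$; the case $n = 0$ is handled separately.

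With this substitution the lemma gives
\[
\mStirling{n}{k}_2 = b^{n}\mStirling{0}{k}_2 + \sum_{i=0}^{n-1} b^{n-1-i}\mStirling{i}{k-1}_2 .
\]
Because $k \geq 1$, the initial condition $\mStirling{0}{k}_2 = \delta_{0,k} = 0$ kills the first term. The convention $b^0 = 1$ only matters if $b = 0$, and it is harmless there because the term is multiplied by zero.

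Next I reindex the remaining sum with $j = i+1$, so that $j$ runs from $1$ to $n$:
\[
\mStirling{n}{k}_2 = \sum_{j=1}^{n} b^{n-j}\mStirling{j-1}{k-1}_2 .
\]
For $j < k$ we have $j-1 < k-1$, so $\mStirling{j-1}{k-1}_2 = 0$ by the triangularity $\mStirling{n}{k}_2 = 0$ for $k>n$ (Lemma \ref{lem-identity-rows}). The lower limit can therefore be raised from $1$ to $k$, which yields the stated identity.

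For $n < k$, including $n = 0$, the right-hand side is an empty sum and equals $0$, matching $\mStirling{n}{k}_2 = 0$. One detail to watch is the $j = n$ term when $b = 0$, that is, $k = 1$. There the convention $b^0 = 1$ gives $\mStirling{n}{1}_2 = \mStirling{n-1}{0}_2 = \delta_{n,1}$, which agrees with the known value of $\mStirling{n}{1}_2$.

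As an alternative, one can induct on $n$ directly from the recurrence \eqref{eq-bs-recurrence}. Write $\mStirling{n+1}{k}_2 = b\,\mStirling{n}{k}_2 + \mStirling{n}{k-1}_2$, substitute the hypothesis, and observe that the new term $\mStirling{n}{k-1}_2$ is exactly the $j = n+1$ summand.

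I do not expect a real obstacle. The only care needed is bookkeeping at the boundaries: the range $n < k$, and the degenerate weight $b = 0$ when $k = 1$.
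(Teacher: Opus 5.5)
Your proof is correct and follows the same route as the paper. Like the paper, you specialize Lemma \ref{lemma-dist-hockey} to starting row $0$, discard the $b^n\mStirling{0}{k}_2$ term because $k\ge 1$, reindex, and raise the lower limit using triangularity. Your separate checks of $n=0$ (where the lemma's hypothesis $j\ge1$ fails) and of $k=1$ (where $b=0$) are valid and a bit more careful than the paper's write-up.
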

\begin{proof}
Set $n=0$ in Lemma \ref{lemma-dist-hockey}:
\[
    \mStirling{j}{k}_2 = b^j \mStirling{0}{k}_2 +\sum_{i=0}^{j-1}b^{j-1-i}\mStirling{i}{k-1}_2.
\]
Due to the initial conditions $\mStirling{0}{0}_2=1$, $\mStirling{n}{0}_2 = \mStirling{0}{k}_2 =0$ ($n, k \geq 1$), the first term on the right-hand side vanishes for $k \geq 1$. Then we can simplify, rewriting the indices:
\[
\mStirling{n}{k}_2 = \sum_{j=0}^{n-1}b^{n-j-1}\mStirling{j}{k-1}_2,
\]
and since $\mStirling{n}{k}_2 =0$ when $k > n$, we can start $j$ from $k$ and shift it by 1 to match the proposition's form:
\[
        \mStirling{n}{k}_2 = \sum_{j=k}^{n}b^{n-j} \mStirling{j-1}{k-1}_2. \qedhere
\]
\end{proof}

Both identities, with $b = m\floor{k/m}$, hold verbatim for the general $m$-Stirling numbers of the second kind, by the same proofs.

\pagebreak
\section*{Appendix C: Succession rules and a generating tree for Bessel--Stirling numbers}
\noindent The Bessel--Stirling numbers of the second kind can also be elegantly described using the formalism of succession rules and generating trees, as introduced by West \cite{west_generating_1995}. This representation provides yet another perspective on the structure of these numbers and their recurrence relation.
\begin{definition}[Succession rule for Bessel--Stirling numbers of the second kind]
The Bessel--Stirling triangle of the second kind can be generated using the following succession rule:
\[
    \begin{aligned}
        \Omega: \begin{cases}
        (0) \\
        (2k) \to (2k)^{2k}, (2k+1) \\
        (2k+1) \to (2k+1)^{2k}, (2k+2)
        \end{cases}
    \end{aligned}
\]
where the notation $(a) \to (b)^c , (d)$ means that a node labeled $(a)$ produces $c$ children labeled $(b)$ and one child labeled $(d)$.
\end{definition}
Starting from the root node labeled $(0)$, we apply this rule iteratively to build a generating tree. The labels at level $n$ of this tree encode the Bessel--Stirling numbers $\mStirling{n}{k}_2$: the number of nodes with label $(k)$ at level $n$ corresponds precisely to $\mStirling{n}{k}_2$.
For example, the first few levels of the generating tree produce the following multisets of labels:
\[
    \begin{aligned}
    \text{Level 0}: &\ {(0)} \\
    \text{Level 1}: &\ {(1)} \\
    \text{Level 2}: &\ {(2)} \\
    \text{Level 3}: &\ {(2)^2(3)} \\
    \text{Level 4}: &\ {(2)^4(3)^4(4)} \\
    \text{Level 5}: &\ {(2)^8(3)^{12}(4)^8(5)} 
    \end{aligned}
\]
The total number of labels at level $n$ is precisely the $n$-th Bessel--Bell number (sequence A007472). This succession rule directly encodes the recurrence relation for the Bessel--Stirling numbers. For even values $k = 2j$, a node labeled $(2j)$ produces $2j$ children with the same label plus one child with label $(2j+1)$, corresponding to the term $2j \cdot \mStirling{n}{2j}_2$ in the recurrence relation. Similarly, for odd values $k = 2j+1$, a node produces $2j$ children with the same label, corresponding to the term $2j \cdot \mStirling{n}{2j+1}_2$, plus one child with label $(2j+2)$.

This succession rule formalism provides a direct connection to the combinatorial interpretation: the label $(k)$ represents using $k$ compartments, and the number of children with the same label reflects the number of valid placements in already-used compartments, while the single child with a new label represents opening a new compartment. The generalization $\Omega_m$ for arbitrary $m$ is given in Definition \ref{def-succession-general}.

\pagebreak
\begin{landscape}
\section*{Appendix D: Comparison of the classical Bell--Stirling--Touchard framework and its $m$-generalization}

\begin{table}[h]
\centering
\small
\renewcommand{\arraystretch}{1.9}
\begin{tabular}{|p{4.2cm}|p{5.2cm}|p{6.4cm}|p{6.6cm}|}
\hline
\textbf{Property} & \textbf{Classical ($m=1$)} & \textbf{Bessel case ($m=2$)} & \textbf{General $m$} \\
\hline
Defining shift property & $B_{n+1} = \sum_k \binom{n}{k}B_k$ & $B^{(2)}_{n+2} = \sum_k \binom{n}{k}2^{n-k}B^{(2)}_k$ & $B^{(m)}_{n+m} = \sum_k \binom{n}{k}m^{n-k}B^{(m)}_k$ \\
Operator equation & $\mathrm{BINOM}\, a = \mathrm{L}\, a$ & $\mathrm{BINOM}^2 a = \mathrm{L}^2 a$ & $\mathrm{BINOM}^m a = \mathrm{L}^m a$ \\
e.g.f.\ ODE & $\mathcal{A}' = e^x \mathcal{A}$ & $\mathcal{A}'' = e^{2x}\mathcal{A}$ & $\mathcal{A}^{(m)} = e^{mx}\mathcal{A}$ \\
ODE at $t=e^x$ & $tf' = tf$ & $t^2f''+tf' = t^2f$ & $(tD)^m f = t^m f$ \\
Primary carrier & $e^t$ & $I_0(t)$ & $C^{(m)}_0(t) = {}_0F_{m-1}(;1,\ldots,1;(t/m)^m)$ \\
Carrier system & $\{e^x\}$ & $\{I_0, I_1\}$ & $\{C^{(m)}_0,\ldots,C^{(m)}_{m-1}\}$ (hyper-Bessel) \\
e.g.f. & $e^{-1}e^{e^x}$ & $p\,I_0(e^x) + q\,K_0(e^x)$ & span of $m$ primitive solutions \\
Triangle recurrence & $\Stirling{n+1}{k} = k\Stirling{n}{k}+\Stirling{n}{k-1}$ & $\mStirling{n+1}{k}_2 = 2\floor{\frac{k}{2}}\mStirling{n}{k}_2+\mStirling{n}{k-1}_2$ & $\mStirling{n+1}{k}_m = m\floor{\frac{k}{m}}\mStirling{n}{k}_m+\mStirling{n}{k-1}_m$ \\
Shift identity & $\Stirling{n+1}{k} = \sum_j \binom{n}{j}\Stirling{j}{k-1}$ & $\mStirling{n+2}{k}_2 = \sum_j \binom{n}{j}2^{n-j}\mStirling{j}{k-2}_2$ & $\mStirling{n+m}{k}_m = \sum_j \binom{n}{j}m^{n-j}\mStirling{j}{k-m}_m$ \\
Row sums & $B_n$ (A000110) & A007472; parity sums A351143, A351028 & $B^{(m)}$; residue sums $= B^{(m,r)}$ \\
Dobi\'nski formula & $B_n = e^{-1}\sum_k \frac{k^n}{k!}$ & $\sum_k \frac{(2k)^n}{(2^kk!)^2} = v_nI_0(1)+u_nI_1(1)$ & $\sum_k \frac{(mk)^n}{(m^kk!)^m} = \sum_r B^{(m,r)}_n C^{(m)}_r(1)$ \\
Polynomials & $T_n(x) = \sum_k \Stirling{n}{k}x^k$ & $\mathscr{B}_n(x) = \sum_k \mStirling{n}{k}_2x^k$ & $\mathscr{B}^{(m)}_n(x) = \sum_k \mStirling{n}{k}_mx^k$ \\
Operational rep. & $e^{-x}e^{txD}e^x = e^{x(e^t-1)}$ & $G(t,x)$ of Thm.\ \ref{thm-bb-polynomials} & carrier expansion (Thm.\ \ref{thm-general-carrier-expansion}) \\
Falling factorial & $x^{\underline{n}} = \prod(x-k)$ & $x^{\uuline{n}} = \prod(x - 2\floor{k/2})$ & $x^{\uuline{n}_m} = \prod(x-m\floor{k/m})$ \\
First kind recurrence & $n\Stirlingone{n}{k}+\Stirlingone{n}{k-1}$ & $2\floor{\frac{n}{2}}\mStirlingone{n}{k}_2+\mStirlingone{n}{k-1}_2$ & $m\floor{\frac{n}{m}}\mStirlingone{n}{k}_m+\mStirlingone{n}{k-1}_m$ \\
First kind row sums & $n!$ (all permutations) & A000246 (odd-order permutations) & $\prod_{j<n}(m\floor{j/m}+1)$ \\
Lah recurrence & $(n+k)\Lah{n}{k}+\Lah{n}{k-1}$ & $2(\floor{\frac{n}{2}}{+}\floor{\frac{k}{2}})\mLah{n}{k}_2+\mLah{n}{k-1}_2$ & $m(\floor{\frac{n}{m}}{+}\floor{\frac{k}{m}})\mLah{n}{k}_m+\mLah{n}{k-1}_m$ \\
Symmetric functions & $h_{n-k}(1..k)$, $e_{n-k}(0..n{-}1)$ & $h,e$ at weights $2\floor{j/2}$ & $h,e$ at weights $m\floor{j/m}$ \\
Combinatorial model & set partitions & 2-compartment urns & $m$-compartment urns \\
Distribution & $\mathrm{Poisson}(y)$ & $\mathrm{CMP}((y/2)^2, 2)$ & $\mathrm{CMP}((y/m)^m, m)$ \\
Factorial moments & $\mathbb{E}[X^{\underline{n}}] = y^n$ & $\mathbb{E}[(2X)^{\uuline{n}}] = y^n \frac{I_{n \bmod 2}(y)}{I_0(y)}$ & $\mathbb{E}[(mX)^{\uuline{n}_m}] = y^n\frac{C^{(m)}_{n \bmod m}(y)}{C^{(m)}_0(y)}$ \\
Raw moments & $\mathbb{E}[X^n] = T_n(y)$ & $2^n\mathbb{E}[X^n] = V_n(y)/I_0(y)$ & $m^n\mathbb{E}[X^n] = V_n(y)/C^{(m)}_0(y)$ \\
\hline
\end{tabular}
\end{table}
\end{landscape}

\end{document}